\newtheorem{theorem}{Theorem}[section]
\newtheorem{corollary}[theorem]{Corollary}
\newtheorem{lemma}[theorem]{Lemma}
\newtheorem{proposition}[theorem]{Proposition}
\par\noindent{\bf Proposition \ref{res:hiper}.}\!\!
\par\noindent{\bf Theorem \ref{result43}.}\!\!
\par\noindent{\it Sketch of the proof}.  
\hfill\linebreak[2]\hspace*{\fill}$\circlearrowleft$}
\par\noindent{\it Proof of Proposition }\ref{prop:stab:smc}.  
\hfill\linebreak[2]\hspace*{\fill}$\circlearrowleft$}
\par\noindent{\it Proof of Propositions }\ref{adap:mon}{\it and }\ref{simult:adap}.\!\!\!
\hfill\linebreak[2]\hspace*{\fill}$\circlearrowleft$}
\theoremstyle{definition}
       \newtheorem{definition}[theorem]{Definition}
       \newtheorem{remark}[theorem]{Remark}
       \newtheorem{example}[theorem]{Example}
       \newtheorem{parrafo}[theorem]{{\!}}}
\numberwithin{equation}{theorem}
\newcommand{\calo}{{\mathcal {O}}}
\DeclareMathOperator{\Max}{\underline{Max}}
\DeclareMathOperator{\mult}{mult}
\DeclareMathOperator{\ord}{ord}
\DeclareMathOperator{\Sing}{Sing}
\DeclareMathOperator{\Spec}{Spec}
\DeclareMathOperator{\Hord}{H-ord}
\DeclareMathOperator{\Slaux}{-Sl}
\DeclareMathOperator{\In}{In}
\DeclareMathOperator{\Gr}{Gr}
\newcommand{\G}{{\mathcal G}}
\renewcommand{\H}{{\mathcal H}}
\newcommand{\p}{{\mathfrak p}}
\newcommand{\m}{{\mathfrak {m}}}
\newcommand{\SSl}{\mathcal{S}\!\Slaux}
\DeclareMathAlphabet{\mathpzc}{OT1}{pzc}{m}{it}
\newcommand{\nub}{\overline{\nu}}
\newcommand{\Mm}{\mathrm{\underline{Max}\; mult}_X}
\newcommand{\mm}{\mathrm{max\, mult}_X}
\newcommand{\Gn}{\G^{(n)}}
\newcommand{\Gne}{\G^{(n-e)}}
\newcommand{\Gd}{\G^{(d)}}
\newcommand{\Ovne}{\mathcal O_{V^{(n-e)}}}  
\newcommand{\Vn}{V^{(n)}}
\newcommand{\Vne}{V^{(n-e)}}
\newcommand{\Vd}{V^{(d)}}
\newcommand{\Diff}{{\mathrm{Diff}}}
\title{The asymptotic Samuel function and invariants of singularities}
\author{A. Benito, A. Bravo, S. Encinas}
\thanks{The authors were partially supported by PGC2018-095392-B-I00; The second author  was partially  supported from the Spanish Ministry of Economy and Competitiveness, through the ``Severo Ochoa'' Program for Centres of Excellence in R\&D (SEV-2015-0554)}
\keywords{Singularities; Rees Algebras; Integral Closure; Asymptotic Samuel Function.}
\subjclass[2010]{13B22, 14E15, 13H15}
\noindent\textsc{Depto. Matem\'aticas, Facultad de Ciencias, Universidad Aut\'onoma de Madrid and Instituto de Ciencias Matem\'aticas CSIC-UAM-UC3M-UCM, Cantoblanco 28049 Madrid, Spain} \newline
\noindent\textsc{Depto. \'Algebra, An\'alisis Matem\'atico, Geometr\'{\i}a y Topolog\'{\i}a, and IMUVA, Instituto de Matem\'aticas.	Universidad de Valladolid} \newline
\begin{document}
	
\begin{abstract}
The asymptotic Samuel function generalizes to arbitrary rings the usual order function of a regular local   ring.
In this paper, we use this function to introduce the notion of the \emph{Samuel slope} of a Noetherian local ring,
and we study some of its properties.
In particular, we focus on the case of a local ring at singular point of a variety,
and, among other results, we prove
that the Samuel slope of these rings is related to some invariants  used in algorithmic resolution of singularities. 

\end{abstract}
	
	\maketitle
	

\section{Introduction}
Let $X$ be an equidimensional algebraic variety of dimension $d$ defined over a perfect field $k$. 
If $X$ is not regular, then the set of points of maximum multiplicity, $\Max\mult_X$, is a closed proper set in $X$.
We will denote by $\max\mult_X$ the maximum value of the multiplicity at points of $X$.
A \emph{simplification of the multiplicity} of $X$ is a finite sequence of blow ups,
\begin{equation} \label{SimplificaMult}
			\xymatrix@R=0pt@C=30pt{
				X=X_0 & X_1 \ar[l]_{\ \ \ \ \pi _1} & \ldots \ar[l]_{\pi _2} & X_{L-1} \ar[l]_{\pi_{L-1}} & X_L \ar[l]_{\ \ \ \ \pi _L}
			}
\end{equation}
with
$$\max\mult_{X_0}  =  \max\mult_{X_1}  =\cdots =   \max\mult_{X_{L-1}}  >   \max\mult_{X_L},$$
where $\pi_i:X_i\to X_{i-1}$ is the blow up at a regular center contained in $\Max\mult_{X_{i-1}}$.
		
Simplifications of the multiplicity exist if the characteristic of $k$ is zero (see \cite{V}), and resolution of singularities follows from there. Recall that Hironaka's line of approach to resolution makes use of the Hilbert-Samuel function instead of the multiplicity \cite{Hir}.
The centers in the sequence (\ref{SimplificaMult}) are determined by resolution functions.
These are upper semi-continuous functions
		$$ \begin{array}{rrcl} 
			f_{X_i}:  &  X_i &  \to  & (\Gamma, \geq) \\
			& \zeta & \mapsto &  f_{X_i}(\zeta)
		\end{array}, \qquad i=0,\ldots, L-1$$
		and  their maximum value, $\max f_{X_i}$, achieved in a closed  regular   subset $\underline{\text{Max}} f_{X_i}\subseteq \Max\mult_{X_{i}}$,  selects the center to blow up.
		Hence, a simplification of the multiplicity of $X$, $X\leftarrow X_L$, is defined as  a sequence of blow ups at regular centers.
		\begin{equation}
			\label{constructive}
			X=X_0\leftarrow X_1 \leftarrow \ldots \leftarrow X_L.
		\end{equation}
		so that 
		$$\max f_{X_0}> \max f_{X_1} > \ldots > \max f_{X_L},$$
		where $\max f_{X_i}$ denotes the maximum value of $f_{X_i}$ for $i=0,1,\ldots,L$.   
		\medskip
		
		Usually, $f_X$ is defined at each point as a sequence of rational numbers. The first coordinate of $f_X$ is the  multiplicity, and
		the second is what we refer to as {\em Hironaka's order function in dimension $d$}, $\ord_X^{(d)}$, where $d$ is the dimension of $X$.  The function $\ord_X^{(d)}$  is a  positive rational number.
		At a given singular point $\zeta\in X$, $f_X(\zeta)$ would look as follows: 
		\begin{equation}
			\label{resol_coord}
			f_X(\zeta)=(\mult_{{\mathfrak m}_{\zeta}} ({\mathcal O}_{X,\zeta}), \ord_X^{(d)}(\zeta),\ldots) \in {\mathbb N}\times {\mathbb Q}^r,
		\end{equation}
		where  $\mult_{{\mathfrak m}_{\zeta}}({\mathcal O}_{X,\zeta})$ denotes   the multiplicity of the local ring $\mathcal{O}_{X,\zeta}$ at the maximal ideal $\mathfrak{m}_{\zeta}$. The remaining coordinates of $f_X(\zeta)$ can be shown to depend on $\ord_X^{(d)}(\zeta)$ (see \cite[Theorem 7.6 and \S 7.11]{E_V97}), 
		thus, we usually say that this is the main invariant in constructive resolution.  Therefore, the last set of coordinates can be though as a {\em refinement of the function} $\ord_X^{(d)}$.
As we will see, the function $\ord_X^{(d)}$ can always be defined if $k$ a perfect field.
\medskip

\begin{example} \label{primer_ejemplo} Let $k$ be a perfect field,   let $S$ be a smooth $k$-algebra of dimension $d$ and define  $R=S[x]$ as the polynomial ring in one variable with coefficients in $S$. Suppose $X$ is a hypersurface in $\Spec(R)$ of maximum multiplicity $m>1$ given by an equation of the form
			$$ f(x)=x^m+a_1x^{m-1}+\ldots+a_m \in   S[x].$$ 
			Set $\beta: \Spec(R) \to \Spec(S)$ and let $\zeta\in X$ be a point of multiplicity $m$. Then one can define a Rees algebra, ${\mathcal R}$,  on $S$, which we refer to as \emph{elimination algebra},  that collects information on the coefficients $a_i\in S$, $i=1,\ldots, m$. Hironaka's order function at the point $\zeta$, $\ord_X^{(d)}(\zeta)$,  is defined using ${\mathcal R}$ (see Section \ref{ElimAlg}). If the characteristic of the field  $k$ does not divide $m$, then, after a translation on the variable $x$, we can assume that the equation is  on  Tschirnhausen form:
			$$(x')^m+a_2'(x')^{m-2}+\ldots+a_m' \in   S[x].$$ 
			And, in such case,  it can be shown that:
			\begin{equation} \label{ordord2}
				\ord_X^{(d)}(\zeta):=\ord_{\zeta}({\mathcal R}) =\min_{i=2,\ldots,m}\left\{\frac{\nu_{\beta(\zeta)}(a'_i)}{i}\right\},
			\end{equation}
where $\nu_{\beta(\zeta)}$ denotes the usual order at the regular local ring $S_{\mathfrak m_{{\beta(\zeta)}}}$. As it turns out,  with the information provided by the elimination algebra ${\mathcal R}$, which is generated by {\em weighted functions} on the coefficients of $f(x)$, one has all the information needed to find a simplification of the multiplicity, at least in the characteristic zero case.
	 
			However, if   the characteristic of the field is $p$, and if   $p$ divides $m$, then, in general, the equality  (\ref{ordord2})  does not hold  (even if,  by chance,  the polynomial were in Tschirnhausen form). 	Philosophically speaking, the elimination algebra ${\mathcal R}$ collects information about the coefficients of $f(x)$, but somehow falls short in collecting {\em the  sufficient amount of information} when the characteristic is positive. This problem motivated in part the papers \cite{BVComp} and \cite{BVIndiana}. There,   the   function $\Hord^{(d)}_X$ was introduced by the first author in collaboration with O. Villamayor. In \cite{Benito_V}, this function played  a role in the proof of desingularization of two dimensional varieties. 
\medskip
			
To give some insight on how $\Hord^{(d)}_X$ is defined, 	suppose, for simplicity, that 
$m=p^{\ell}$ for some $\ell\in {\mathbb Z}_{\geq 1}$,
$f(x)=x^{p^{\ell}}+a_1x^{{p^{\ell}}-1}+\ldots+a_{p^{\ell}} \in   S[x]$, 
and let  $\zeta$ be a point  of multiplicity ${p^{\ell}}$. Then it can be proved that
$$ \ord_X^{(d)}(\zeta)\leq \frac{\nu_{\beta({\zeta})}(a_i)}{i}, \qquad i=1,\ldots,p^{\ell}-1.$$
But there are examples where 
$$\frac{\nu_{\beta({\zeta})}(a_{p^{\ell}})}{p^{\ell}} <\ord_X^{(d)}(\zeta),$$
and the inequality remains even after considering translations of the form $x'=x+s$, $s\in S_{\mathfrak{q}}$, where $\mathfrak{q}=\mathfrak{m}_{\beta(\zeta)}$. This pathology is part of the reasons why the resolution strategy (that works in characteristic zero) cannot be extended to the positive characteristic case.
			
The previous discussion  motivates the definition of the slope of $f(x)$ at $\zeta$ as:
$$Sl(f(x))(\zeta)=\min\left\{
\frac{\nu_{\beta({\zeta})}(a_{p^{\ell}})}{p^{\ell}}, \ord_X^{(d)}(\zeta)\right\}.$$
Changes of variables of the form $x=x'+s$ with $s\in S_{\mathfrak{q}}$ produce  changes on the coefficients of the equation: 
\begin{equation} \label{ChangeVar}
	f(x')=(x')^{p^{\ell}}+a'_1(x')^{p^{\ell}-1}+\ldots+a'_{p^{\ell}}
\end{equation}
which may lead to a different value of the slope.  
However,  it is possible to  construct an invariant from these numbers  by setting:
$$\Hord^{(d)}_X(\zeta):=\sup_{s\in S_{\mathfrak{q}}}\{Sl(f(x+s))(\zeta)\}.$$ 
Moreover this supremum is a maximum since there is a change of variables as in 
(\ref{ChangeVar}) for which
$$\Hord^{(d)}_X(\zeta)=\min\left\{
\frac{\nu_{\beta({\zeta})}(a'_{p^{\ell}})}{p^{\ell}}, \ord_X^{(d)}(\zeta)\right\}.$$
\end{example}
$\Hord^{(d)}_X$ can be defined for any hypersurface with maximum multiplicity $m$, even when $m$ is not a $p$-th power (see Section \ref{presentaciones}). Observe that the previous discussion   takes care of the case in which $X$ is locally a hypersurface at a singular point $\zeta$, since, after considering a suitable \'etale neighborhood of $X$ at $\zeta$, it can be assumed that the equation defining $X$ can be written as a polynomial in one variable with coefficients in some regular ring $S$. 
  
When  $X$ is an arbitrary algebraic $d$-dimensional variety defined over a perfect field,      $\Hord^{(d)}_X$ can also be defined (in \'etale topology) using \cite{BVComp}, \cite{BVIndiana} and Villamayor's {\em presentations of the multiplicity} in  \cite{V}.
In the latter paper it is proven that, locally, in an \'etale  neighborhood of a closed point $\xi$ of maximum multiplicity $m>0$, one can find  a smooth $k$-algebra $S$ of dimension $d$ and polynomials in different variables $x_i$ with coefficients in $S$, $f_i(x_i)\in S[x_i]$, of degrees $m_1,\ldots, m_e$,
with the following property:
If we consider
\begin{equation}
\label{polinomios}
		f_1(x_1),\ldots, f_e(x_e)\in R=S[x_1,\ldots, x_e],
\end{equation}	
then each $f_i(x_i)$ defines a hypersurface of maximum multiplicity $m_i$, $H_i=\{f_i=0\}$, so that, 
$X\subset \Spec(R)$ and
\begin{equation}\label{local_presentation} 
		\Mm =\cap_{i}\text{\underline{Max}\; mult}_{H_i}.
\end{equation}
In fact, the link between $X$ and the hypersurfaces $H_i$ is stronger as we will see in Section~\ref{Rees_Algebras}.
	
As in the hypersurface case, Hironaka's order function, $\ord_X^{(d)}$, is defined by constructing  an {\em elimination algebra}, ${\mathcal R}$ on $S$, again using certain weighted functions on the coefficients of the polynomials $f_i(x_i)$ (see Section \ref{ElimAlg}).
And, in the same way, we have that
\begin{equation*}
\Hord^{(d)}_X(\zeta)=\min_{i}\Hord^{(d)}_{H_i}(\zeta).
\end{equation*}

This approach  will allow  us to work in a situation very similar to    the hypersurface case.   
Details and definitions will be given in Sections \ref{presentaciones}   and \ref{seccion_demos}. The precise statement of Villamayor's result is given in  Theorem \ref{presentaciones_mult},
because it will be used in the proof of our results.

\

\noindent{\bf Results}
\medskip

\noindent
From our previous discussion, the value $\Hord^{(d)}_X(\zeta)$ codifies information from
the coefficients of the polynomials in (\ref{polinomios}) that only depends on the inclusion $S\subset R$.
Observe that the definition of the  function  $\Hord^{(d)}_X$ requires   
the use of  local (\'etale) embeddings, the selection of a sufficiently general finite projection to some smooth scheme, and the construction of a local presentation of the multiplicity as in (\ref{local_presentation}).
Neither of these choices is unique. As a consequence, some work has to be done to show that the values of the function do not depend on any of these different choices.
\medskip
		
In this paper we show that the value $\Hord^{(d)}_X(\zeta)$ can be read  from the arc space of $X$ combined with the use of information provided by the asymptotic Samuel function at the maximal ideal  of the local ring at $\zeta$. 
In particular, no \'etale extensions and no local embeddings into smooth schemes are  needed:
the information is already present in the cotangent space at $\zeta$,
${\mathfrak m}_{\zeta}/{\mathfrak m}^2_{\zeta}$,  and the space of arcs in $X$ with center  at $\zeta$, ${\mathcal L}(X,\zeta)$.
		
More precisely,  on the one hand, the value $\ord^{(d)}_X(\zeta)$ can be read studying the Nash multiplicity sequences of arcs in $X$ with center $\zeta$
(this was studied in \cite{BEP2} by the last two authors in collaboration with B.~Pascual-Escudero).

On the other hand, studying the properties of the asymptotic Samuel function, we came up
with the notion of the {\em Samuel slope of a local ring ${\mathcal O}_{X,\zeta}$},
${\mathcal S}\text{-Sl}({\mathcal O}_{X,\zeta})$ (see Definition \ref{DefSSl}). For a singular point, ${\mathcal S}\text{-Sl}({\mathcal O}_{X,\zeta})\geq 1$, and we will make a distinction 
depending on whether ${\mathcal S}\text{-Sl}({\mathcal O}_{X,\zeta})=1$ (non-extremal case) or ${\mathcal S}\text{-Sl}({\mathcal O}_{X,\zeta})> 1$ (extremal case). Actually, the previous distinction can be made after analyzing properties of the cotangent space ${\mathfrak m}_{\zeta}/{\mathfrak m}_{\zeta}^2$. A combination of these pieces of information gives us enough input to compute $\Hord_X^{(d)}$.
Our results say that
$$\Hord^{(d)}_X(\zeta)=\min \{{\mathcal S}\text{-Sl}({\mathcal O}_{X,\zeta}), \ord^{(d)}_X (\zeta)\},$$
but more precisely, we can say more:
\medskip

\noindent{\bf Theorem \ref{maintheorem}.}
{\em Let $X$ be an equidimensional variety of dimension $d$ defined over a perfect field $k$. Let $\zeta\in X$ be a point of multiplicity $m>1$. Then: 
\begin{itemize}
	\item If ${\mathcal S}\text{-Sl}({\mathcal O}_{X,\zeta})=1$, then
	$$1={\mathcal S}\text{-Sl}({\mathcal O}_{X,\zeta})=\Hord^{(d)}_X(\zeta)\leq \ord^{(d)}_X(\zeta).$$
	In addition, if $\zeta$ is a closed point then also $\ord^{(d)}_X(\zeta)=1$.
	\item If ${\mathcal S}\text{-Sl}({\mathcal O}_{X,\zeta})>1$, then    
	$$\Hord^{(d)}_X(\zeta)=\min \{{\mathcal S}\text{-Sl}({\mathcal O}_{X,\zeta}), \ord^{(d)}_X (\zeta)\}.$$
\end{itemize}
}
\medskip	

We give an idea of the meaning of this result in the following lines. When the characteristic is zero,  the description of the maximum multiplicity locus of $X$ in  (\ref{local_presentation}) goes far beyond that equality.  In fact,  it can be proven that   to lower the maximum multiplicity of $X$ it suffices to work with the elimination algebra ${\mathcal R}$ (which is defined on a smooth scheme of dimension $d$). In other words, a simplification of the multiplicity of the $d$-dimension variety $X$ becomes a problem about {\em finding a resolution} of a Rees algebra defined on a smooth $d$-dimensional scheme (see Sections \ref{Rees_Algebras} and \ref{ElimAlg}). 
If $\ord^{(d)}_X(\zeta)=1$, then  this indicates that, either the multiplicity of $X$ can be lowered with a single blow up at a regular center, or else, a simplification of the multiplicity of $X$ is a problem that can be solved using certain Rees algebra defined in a $(d-1)$-dimensional smooth scheme
(see \S\ref{induccion} for details).
Thus, our original problem is, in principle, simpler to solve. 
And the theorem says that the condition $\ord^{(d)}_X(\zeta)=1$ is already encrypted in ${\mathfrak m}_{\zeta}/{\mathfrak m}_{\zeta}^2$. 
\medskip

The second part of the theorem says that the relevant information from the coefficients of the polynomials in
(\ref{polinomios}), which, in general, only exists in a suitable \'etale neighborhood of the point, can already be read through the Samuel slope of the original local ring at the singular point and the sequences of Nash multiplicities of arcs with center the given point.

\

\noindent{\bf Organization of the paper}
\medskip

\noindent
Facts on the asymptotic Samuel function are given in Section \ref{Samuel_a}, and
in addition, we study the behavior of this function when consider certain finite extension of
rings (Proposition \ref{ExtFinNuBar}).
In section \ref{Seccion3} we define the notion of the Samuel slope of a local ring,
and we study his behavior under \'etale extensions (Propositions \ref{Prop_Slope_etale} and \ref{prop_sucesiones}).
Rees algebras and their use in resolution of singularities are studied in Sections \ref{Rees_Algebras}, \ref{AlgoResol}, and \ref{ElimAlg}.
The function $\Hord^{(d)}_X$ is treated in Section \ref{presentaciones}.
The proof of the main result is addressed in Section \ref{seccion_demos}, here
our results from Section \ref{Seccion3} are needed.

\

\noindent \emph{Acknowledgments:}  We  profited from conversations with O.~E. Villamayor U.,
C.~Abad, B.~Pascual-Escudero,
 C.~del-Buey-de-Andrés, and C. Chiu. In addition, we would like to thank S.~D. Cutkosky for giving us a crucial hint that led us to a proof of  Proposition \ref{Prop_Slope_etale}.
We also want to thank to the anonymous referee for useful suggestions and comments to 
improve the paper.

\section{The asymptotic Samuel function}\label{Samuel_a}

The {\em asymptotic Samuel function} was first introduced by Samuel in \cite{Samuel} and  studied afterwards  by D. Rees in a series of papers (\cite{Rees1955}, \cite{Rees_Local},  \cite{Rees_Ideals}, \cite{Rees_Ideals_II}).
Thorough expositions on this topic can be found in \cite{LejeuneTeissier1974} and \cite{Hu_Sw},
see also \cite{Dale} for a generalization to arbitrary filtrations.
We will use $A$ to denote a commutative ring with 1.
	\begin{definition} \label{DefFuncOrd}
		A function $w:A\to\mathbb{R}\cup\{\infty\}$ is an 
		\emph{order function} if
		\begin{enumerate}
			\item[(i)]  $w(f+g)\geq \min\{w(f), w(g)\}$, for all $f,g\in A$,
			\item[(ii)]  $w(f\cdot g)\geq w(f)+w(g)$, for all $f,g\in A$,
			\item[(iii)] $w(0)=\infty$ and $w(1)=0$.
		\end{enumerate}
	\end{definition}
	
\begin{remark} \cite[Remark 0.3]{LejeuneTeissier1974}
	\label{minimo_suma} If $w$ is an order function then $w(x)=w(-x)$ and if  $w(x) \neq w(y)$ then $w(x+y)=\min\{w(x), w(y)\}$.  
\end{remark}

	\begin{example}
		Let   $I\subset A$ be a proper ideal.
		Then the function $\nu_I:A\to \mathbb{R}\cup\{\infty\}$ defined by
		\begin{equation*}
		\nu_I(f):=\sup\{m\in\mathbb{N}\mid f\in I^m\}
		\end{equation*}
		is an order function.  If $(A,{\mathfrak m})$ is a local regular ring, then $\nu_{\mathfrak m}$ is just the usual order function. 
	\end{example}

In general, for $n\in {\mathbb N}_{>1}$, the inequality $\nu_I(f^n)\geq n\nu_I(f)$ can  be strict. This can be seen for instance by considering the following example. Let $k$ be a field, and let $A=k[x,y]/\langle x^2-y^3\rangle$.  Set ${\mathfrak m}=\langle \overline{x}, \overline{y}\rangle$. Then $\nu_{\mathfrak m}(\overline{x})=1$, but $\nu_{\mathfrak m}(\overline{x}^2)=3$.  The asymptotic Samuel function is a  normalized version of the  previous  order    that gets around  this problem:
	
\begin{definition}
		Let   $I\subset A$ be a proper ideal. The {\em asymptotic Samuel function at $I$}, $\bar{\nu}_I:A\to\mathbb{R}\cup\{\infty\}$, is defined as: 
		\begin{equation}\label{Def_Sam_lim}
		\bar{\nu}_I(f)=\lim_{n\to\infty}\frac{\nu_I(f^n)}{n}, \qquad f\in A.
		\end{equation}
\end{definition}

It can be shown that the limit (\ref{Def_Sam_lim})  exists in  $\mathbb{R}_{\geq 0}\cup\{\infty\}$ for any ideal $I\subset A$  (see   \cite[Lemma 0.11]{LejeuneTeissier1974}). Again, if   $(A,{\mathfrak m})$ is a local regular ring, then $\nub_{\mathfrak m}$ is just the usual order function. As indicated before, this is an order function with   nice properties:  
	
\begin{proposition}\label{PotenciaNuBar}  
\cite[Corollary 0.16, Proposition 0.19]{LejeuneTeissier1974} 
	The function $\bar{\nu}_I$ is an order function. Furthermore,  it satisfies the following properties for each $f\in A$ and each $r\in {\mathbb N}$: 
		\begin{enumerate}
			\item[(i)] $\bar{\nu}_I(f^r)=r\bar{\nu}_I(f)$;
			\item[(ii)] $\bar{\nu}_{I^r}(f)=\dfrac{1}{r}\bar{\nu}_I(f)$.
		\end{enumerate}
		
\end{proposition}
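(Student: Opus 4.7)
The plan is to verify the three axioms (i)--(iii) of Definition \ref{DefFuncOrd} for $\bar{\nu}_I$, and then to establish the two identities. A preliminary observation, used throughout, is that for fixed $f\in A$ the sequence $a_n := \nu_I(f^n)$ is superadditive, since $\nu_I(f^{m+n}) \geq \nu_I(f^m)+\nu_I(f^n)$; by Fekete's lemma the limit $\bar{\nu}_I(f)$ therefore coincides with $\sup_{n\geq 1}\nu_I(f^n)/n$. In particular $\nu_I(f^n)\leq n\,\bar{\nu}_I(f)$ for all $n$, and for every $\varepsilon>0$ there is $N_0\geq 1$ with $\nu_I(f^{N_0})\geq N_0(\bar{\nu}_I(f)-\varepsilon)$.

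Axiom (iii) is immediate from $\nu_I(0)=\infty$ and $\nu_I(1)=0$. For submultiplicativity, $\nu_I((fg)^n)=\nu_I(f^n g^n)\geq\nu_I(f^n)+\nu_I(g^n)$; dividing by $n$ and letting $n\to\infty$ gives $\bar{\nu}_I(fg)\geq\bar{\nu}_I(f)+\bar{\nu}_I(g)$.

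The substantive step is the ultrametric axiom. Expanding by the binomial formula and using $\nu_I\bigl(\binom{n}{k}\bigr)\geq 0$ yields
\[
\nu_I((f+g)^n)\;\geq\;\min_{0\leq k\leq n}\bigl(\nu_I(f^k)+\nu_I(g^{n-k})\bigr).
\]
Set $a:=\bar{\nu}_I(f)$, $b:=\bar{\nu}_I(g)$, and assume $a\leq b$. Fix $\varepsilon>0$ and choose an integer $N_0$ large enough that $\nu_I(f^{N_0})\geq N_0(a-\varepsilon)$ and $\nu_I(g^{N_0})\geq N_0(b-\varepsilon)$. Writing any $k\geq N_0$ as $k=qN_0+s$ with $0\leq s<N_0$ and using superadditivity gives $\nu_I(f^k)\geq qN_0(a-\varepsilon)\geq(k-N_0)(a-\varepsilon)$, and analogously for $g$. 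A case analysis splitting $\{0,\dots,n\}$ into the middle range $N_0\leq k\leq n-N_0$ and the two tails (where one factor is estimated by $0$) shows that in each range $\nu_I(f^k)+\nu_I(g^{n-k})\geq n(a-\varepsilon)-C$ for a constant $C$ independent of $n$ (here $a\leq b$ absorbs the tails). Dividing by $n$, letting $n\to\infty$, and then $\varepsilon\to 0$ yields $\bar{\nu}_I(f+g)\geq\min\{\bar{\nu}_I(f),\bar{\nu}_I(g)\}$. The case where $a$ or $b$ is $\infty$ is handled identically by replacing $a-\varepsilon$, $b-\varepsilon$ by an arbitrary $M\in\mathbb{N}$.

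The two identities are straightforward. For (i), a change of variable in the defining limit gives
\[
\bar{\nu}_I(f^r)=\lim_{n\to\infty}\frac{\nu_I(f^{rn})}{n}=r\lim_{n\to\infty}\frac{\nu_I(f^{rn})}{rn}=r\,\bar{\nu}_I(f).
\]
For (ii), since $(I^r)^\ell=I^{r\ell}$, one has $\nu_{I^r}(f^n)=\lfloor \nu_I(f^n)/r\rfloor$; dividing by $n$ and passing to the limit kills the rounding error, giving $\bar{\nu}_{I^r}(f)=\tfrac{1}{r}\bar{\nu}_I(f)$. The main obstacle is the subadditivity step: the binomial expansion mixes every power $f^k$ with $g^{n-k}$, and the naive axiom for $\nu_I$ does not pass to the limit term-wise. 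The Fekete-type lower bound $\nu_I(f^k)\geq(k-N_0)(a-\varepsilon)$ is precisely what converts the minimum over $k$ into a linear bound in $n$ and makes the argument go through.
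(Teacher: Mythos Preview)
Your proof is correct. The paper does not give its own proof of this proposition; it simply cites \cite[Corollary 0.2.6, Proposition 0.2.9]{LejeuneTeissier1974}, so there is nothing to compare against beyond the original reference. Your argument---Fekete's lemma for the superadditive sequence $\nu_I(f^n)$, the binomial expansion combined with the linear lower bound $\nu_I(f^k)\geq (k-N_0)(a-\varepsilon)$ to handle the ultrametric inequality, and the direct limit manipulations for (i) and (ii)---is the standard one and matches the approach in Lejeune--Teissier.
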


\

\noindent{\bf The asymptotic Samuel function on Noetherian rings}
\medskip

\noindent When $A$ is Noetherian, the number $\nub_I(f)$  measures how deep the element $f$ lies in the integral closure of powers of $I$. In fact, the following results hold:

	\begin{proposition}
	\cite[Corollary 6.9.1]{Hu_Sw}
	Suppose $A$ is Noetherian. Then for a proper ideal    $I\subset A$  and every $a\in\mathbb{N}$, \begin{equation*}
	\overline{I^a}=\{f\in R \mid \bar{\nu}_I(f)\geq a\}.
	\end{equation*}
\end{proposition}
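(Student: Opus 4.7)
The plan is to prove the two inclusions separately; the forward one admits a direct combinatorial proof, while the converse requires the valuative characterization of the integral closure.

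For the inclusion $\overline{I^a}\subseteq\{f:\bar{\nu}_I(f)\geq a\}$, assume $f\in\overline{I^a}$ and fix a monic integral dependence
\[
f^n+c_1f^{n-1}+\cdots+c_n=0,\qquad c_i\in I^{ai}.
\]
I would show by induction on $k\geq 0$ that $\nu_I(f^{n+k})\geq a(k+1)$. The base case $k=0$ is immediate, since each summand $c_if^{n-i}$ in the relation lies in $I^{ai}\subseteq I^a$. For the inductive step, multiply the relation by $f^k$ to get $f^{n+k}=-\sum_{i=1}^{n}c_i f^{n+k-i}$ and estimate each term: for $i\geq k+1$ the crude bound $c_if^{n+k-i}\in I^{ai}\subseteq I^{a(k+1)}$ suffices, while for $1\leq i\leq k$ we have $n+k-i\geq n$, and the inductive hypothesis applied to $k'=k-i<k$ gives $\nu_I(f^{n+k-i})\geq a(k-i+1)$, so $\nu_I(c_i f^{n+k-i})\geq ai+a(k-i+1)=a(k+1)$. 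Dividing by $N=n+k$ and letting $N\to\infty$ yields $\bar{\nu}_I(f)\geq a$.

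For the reverse inclusion $\{f:\bar{\nu}_I(f)\geq a\}\subseteq\overline{I^a}$, a naive attempt---finding some $n$ with $\nu_I(f^n)\geq an$ and exhibiting $X^n-f^n=0$ as an integral dependence---is not obviously available: for a superadditive sequence, the supremum $\bar{\nu}_I(f)=\sup_n\nu_I(f^n)/n$ need not be attained, so a priori one could have $\bar{\nu}_I(f)=a$ without $\nu_I(f^n)\geq an$ for any single $n$. I would therefore invoke Rees's valuation theorem for Noetherian rings: to the ideal $I$ one attaches a finite set $v_1,\ldots,v_s$ of discrete valuations (the Rees valuations of $I$, obtained from the height-one primes of $\overline{\mathcal{R}}=\bigoplus_{n\geq 0}\overline{I^n}\,t^n$ containing $It$) such that simultaneously
\[
\overline{I^a}=\bigcap_{i=1}^{s}\{f\in A:\,v_i(f)\geq a\,v_i(I)\}\qquad\text{and}\qquad \bar{\nu}_I(f)=\min_{1\leq i\leq s}\frac{v_i(f)}{v_i(I)}.
\]
From these two descriptions the equivalence $\bar{\nu}_I(f)\geq a\iff v_i(f)\geq a\,v_i(I)$ for all $i\iff f\in\overline{I^a}$ is immediate.

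The main obstacle is therefore Rees's valuation theorem itself: the existence, finiteness, and dual nature of the $v_i$. Its proof rests on analyzing the normalized blow-up $\Proj(\overline{\mathcal{R}})$ of $\Spec(A)$ along $I$---showing that $\overline{\mathcal{R}}$ is the graded direct sum $\bigoplus_{n\geq 0}\overline{I^n}\,t^n$ and reading the valuations off the components of the exceptional divisor. Since this material is developed in detail in \cite{Hu_Sw}, the argument for Corollary~6.9.1 reduces to combining the explicit forward direction above with a citation of those structural results.
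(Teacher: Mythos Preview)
The paper does not give its own proof of this proposition; it is quoted verbatim from \cite[Corollary~6.9.1]{Hu_Sw} and stated without argument, so there is nothing in the paper to compare your attempt against. That said, your proof is sound and would stand on its own.

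Your forward inclusion is clean: the inductive bound $\nu_I(f^{n+k})\geq a(k+1)$ is correct as written (the split into $i\geq k+1$ and $1\leq i\leq k$ is exactly right), and dividing by $n+k$ and letting $k\to\infty$ gives $\bar\nu_I(f)\geq a$. For the reverse inclusion you correctly identify why the naive approach fails---the supremum $\bar\nu_I(f)=\sup_n\nu_I(f^n)/n$ need not be attained when $\bar\nu_I(f)=a$---and Rees valuations are indeed the standard remedy. One small caveat: the formula $\bar\nu_I(f)=\min_i v_i(f)/v_i(I)$ (stated in the paper immediately \emph{after} this proposition) carries the hypothesis that $I$ is not contained in any minimal prime of $A$. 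To make your argument cover all proper ideals you should either indicate how to reduce to that case (for instance by working modulo each minimal prime and using that integral closure of ideals is detected on $A/\mathfrak p$), or simply defer to the full treatment in \cite{Hu_Sw}, where this technicality is absorbed into the construction of the Rees valuations.
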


		\begin{corollary} \label{nub_integral}
		Let $A$ be a Noetherian ring and $I\subset A$ a proper ideal. If $f\in A$ then
		\begin{equation*}
		\bar{\nu}_I(f)\geq \frac{a}{b} \Longleftrightarrow f^b\in\overline{I^a}.
		\end{equation*}
	\end{corollary}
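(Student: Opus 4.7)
The plan is to reduce the fractional statement to the integer-exponent characterization of integral closure via the homogeneity property of $\bar{\nu}_I$. The only two ingredients needed are Proposition~\ref{PotenciaNuBar}(i), which says $\bar{\nu}_I(f^r)=r\bar{\nu}_I(f)$ for $r\in\mathbb{N}$, and the immediately preceding proposition, which identifies $\overline{I^a}=\{g\in A\mid\bar{\nu}_I(g)\geq a\}$ for Noetherian $A$ and $a\in\mathbb{N}$.

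The chain of equivalences I would write down is the following. Starting from $\bar{\nu}_I(f)\geq a/b$, multiply both sides by the positive integer $b$ to obtain $b\,\bar{\nu}_I(f)\geq a$. By Proposition~\ref{PotenciaNuBar}(i), the left-hand side is exactly $\bar{\nu}_I(f^b)$, so this is equivalent to $\bar{\nu}_I(f^b)\geq a$. Now $a\in\mathbb{N}$, so the previous proposition applies and gives $f^b\in\overline{I^a}$. Each implication is reversible, so the biconditional holds.

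There is essentially no obstacle here: the only subtlety is to make sure that multiplying the inequality $\bar{\nu}_I(f)\geq a/b$ by $b$ is valid even when $\bar{\nu}_I(f)=\infty$, but in that case $\bar{\nu}_I(f^b)=\infty$ as well by the same homogeneity formula, so the equivalence still holds trivially (one checks that $\infty\in\{g:\bar{\nu}_I(g)\geq a\}$ forces $f^b\in\overline{I^a}$, as $\overline{I^a}$ contains every element of infinite asymptotic order). Hence the proof is a two-line computation and does not require any further machinery beyond the two results already cited in the excerpt.
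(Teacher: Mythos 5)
Your proof is correct and is essentially the argument the paper leaves implicit: the corollary is an immediate consequence of the preceding proposition (the integer-exponent characterization of $\overline{I^a}$) combined with the homogeneity $\bar{\nu}_I(f^b)=b\,\bar{\nu}_I(f)$ from Proposition~\ref{PotenciaNuBar}(i). Your remark on the $\bar{\nu}_I(f)=\infty$ case is careful but harmless; nothing further is needed.
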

	
The previous characterization of $\nub_I$  leads to the following result that give a valuative version of the function. 

\begin{theorem}
	Let $A$ be a Noetherian ring, and let $I\subset A$ be a proper ideal not contained in a minimal prime of $A$.
	Let $v_1,\ldots,v_s$ be a set of Rees valuations of the ideal $I$.
		If $f\in A$ then
		\begin{equation*}
		\bar{\nu}_I(f)=\min\left\{\frac{v_i(f)}{v_i(I)}\mid i=1,\ldots,s\right\}.
		\end{equation*}
\end{theorem}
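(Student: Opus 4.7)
My plan is to combine the integral-closure characterization of $\bar\nu_I$ given by Corollary~\ref{nub_integral} with the defining property of Rees valuations, and then compare the two numbers rational-by-rational. Recall that since $A$ is Noetherian and $I$ is not contained in a minimal prime, the Rees valuations $v_1,\dots,v_s$ exist, are finitely many, and are characterized by the equality
$$\overline{I^{a}}=\bigl\{g\in A\mid v_i(g)\geq a\,v_i(I)\ \text{for all } i=1,\ldots,s\bigr\}$$
for every $a\in\mathbb{N}$. I would take this as the input from the theory of Rees valuations (the analogue of the result quoted just above the theorem for ordinary integral closure).

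Set $\mu(f):=\min_i\{v_i(f)/v_i(I)\}$. The key step is to prove the equivalence
$$\bar{\nu}_I(f)\geq \frac{a}{b}\ \Longleftrightarrow\ \mu(f)\geq \frac{a}{b}$$
for every pair of positive integers $a,b$. Indeed, by Corollary~\ref{nub_integral} the left-hand side is equivalent to $f^{b}\in\overline{I^{a}}$, which by the Rees-valuation criterion above means $v_i(f^{b})\geq a\,v_i(I)$ for all $i$. Since each $v_i$ is a valuation, $v_i(f^{b})=b\,v_i(f)$, and dividing by $b\,v_i(I)>0$ (the positivity of $v_i(I)$ follows from $I$ not being contained in the center of $v_i$'s associated minimal prime) the condition translates to $v_i(f)/v_i(I)\geq a/b$ for every $i$, i.e., $\mu(f)\geq a/b$.

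Once the equivalence is established for all $a/b\in\mathbb{Q}_{>0}$, I would finish by noting that both $\bar\nu_I(f)$ and $\mu(f)$ are suprema (in $\mathbb{R}_{\geq 0}\cup\{\infty\}$) of the set of rationals $a/b$ for which the corresponding inequality holds, so the two numbers must coincide. The extreme cases have to be handled separately but cause no real trouble: if some $v_i(f)=\infty$ for all $i$, i.e., $f$ lies in every Rees valuation ring's maximal ideal to every power, one checks directly that $\bar\nu_I(f)=\infty$; if $\mu(f)=0$, then some $v_i(f)=0$, so $f$ is a unit at $v_i$, forcing $f\notin\overline{I}$ and hence $\bar\nu_I(f)<1$, after which the rational comparison pins the value to $0$.

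The main obstacle I anticipate is not the chain of equivalences itself, which is almost tautological once the two characterizations are in place, but rather the preliminary need to invoke (or recall carefully) the defining property of Rees valuations in the form used above. This is a nontrivial theorem in its own right, and the hypothesis that $I$ is not contained in a minimal prime is precisely what guarantees that the $v_i(I)$ appearing in the denominators are strictly positive, making the ratios $v_i(f)/v_i(I)$ well defined. Once this is granted, the argument reduces to the clean arithmetic manipulation described above.
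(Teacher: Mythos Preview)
Your argument is correct and is essentially the standard proof of this fact; the paper itself does not give an independent argument but simply cites \cite[Lemma 10.1.5, Theorem 10.2.2]{Hu_Sw} and \cite[Proposition 2.2]{Irena}, and what you have written is precisely the content one finds there. One small point: your handling of the boundary cases is more elaborate than necessary, since the equivalence $\bar\nu_I(f)\geq a/b \Leftrightarrow \mu(f)\geq a/b$ for all positive rationals already forces equality in $\mathbb{R}_{\geq 0}\cup\{\infty\}$ by density, with no separate analysis of $\mu(f)=0$ or $\mu(f)=\infty$ needed.
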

	
	\begin{proof} See \cite[Lemma 10.1.5, Theorem 10.2.2]{Hu_Sw} and \cite[Proposition 2.2]{Irena}. 
	\end{proof}

	\begin{remark}\label{NuBarBlupNorm} 
		Let $A$ be a Noetherian   reduced   ring,  and let $I\subset A$ be a proper ideal not contained in any minimal prime of $A$. Set $X=\Spec(A)$ and let $\overline{X}$ be the normalized blow up of $X$ at the ideal $I$. 
		Then, the sheaf of ideals $I\mathcal{O}_{\overline{X}}$ is invertible and, since $\overline{X}$ is normal, there is a finite number of reduced  and irreducible hypersurfaces $H_1,\ldots,H_{\ell}$ in $\overline{X}$,   and 
		there exists an open set $U\subset \overline{X}$, such that $\overline{X}\setminus U$ has codimension at least  $2$ such that: 
		\begin{equation*}
		I\mathcal{O}_{U}=I(H_1)^{c_1}\cdots I(H_{\ell})^{c_{\ell}}\vert_U
		\end{equation*}
		for some   integers $c_1,\ldots,c_{\ell}\in {\mathbb Z}_{\geq 1}$. 
		Denote by  $v_i$ the valuation associated to $\mathcal{O}_{\overline{X},h_i}$, where $h_i$ is the generic point of $H_i$. Then note that a subset of $\{v_1,\ldots,v_{\ell}\}$ has to be a Rees valuation set of $I$. Therefore,  if $f\in A$ then 
		\begin{equation*}
		\bar{\nu}_I(f)=\min\left\{\frac{v_i(f)}{v_i(I)}\mid i=1,\ldots,\ell\right\}.
		\end{equation*}
		See \cite[Theorem 10.2.2]{Hu_Sw} and \cite[Theorem 2.1, Proposition 2.2]{Irena}. 
\end{remark} 

As an application of Remark~\ref{NuBarBlupNorm} we can prove the following result about the behavior of the $\nub$ function on products of elements.
This will be used in the proof of Theorem~\ref{maintheorem}.
\begin{proposition} \label{ExtFinNuBar}
Let $A\to C$ be ring homomorphism of Noetherian rings, where $A$ is regular and $C$ is reduced and equidimensional. Let $Q(A)$ be the fraction field of $A$.  Suppose that no non-zero element of $A$ maps to a zero divisor in $C$,  and that  the extension $Q(A)\to Q(A)\otimes_AC$ is finite.  
Let $\mathfrak{q}\in\Spec(C)$ and $\mathfrak{n}=\mathfrak{q}\cap A$.
Assume that ${\mathfrak n}C$ is a reduction of ${\mathfrak q}\subset C$, and
that $A/\mathfrak{n}$ is regular. If $a\in A$ and $f\in C$
then:
\begin{equation*}
\bar{\nu}_{\mathfrak{q}}(a)=\bar{\nu}_{\mathfrak{n}}(a), \ \  \text{ and }  \  \   \bar{\nu}_{{\mathfrak q}}(af)=\bar{\nu}_{{\mathfrak q}}(a)+\bar{\nu}_{{\mathfrak q}}(f).
\end{equation*}
\end{proposition}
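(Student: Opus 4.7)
The plan is to exploit the valuative description of the asymptotic Samuel function provided by Remark~\ref{NuBarBlupNorm}. Since $\mathfrak{n}C$ is a reduction of $\mathfrak{q}$, Corollary~\ref{nub_integral} forces $\nub_\mathfrak{q}=\nub_{\mathfrak{n}C}$ on all of $C$, so both claims reduce to proving $\nub_{\mathfrak{n}C}(a)=\nub_\mathfrak{n}(a)$ for $a\in A$ together with the additivity $\nub_{\mathfrak{n}C}(af)=\nub_{\mathfrak{n}C}(a)+\nub_{\mathfrak{n}C}(f)$ for $a\in A$ and $f\in C$.

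First I would pin down the Rees valuation of $\mathfrak{n}$ in $A$. Since $A$ and $A/\mathfrak{n}$ are regular, $\mathfrak{n}$ is locally generated by part of a regular system of parameters, so $\Gr_\mathfrak{n}(A)\cong(A/\mathfrak{n})[T_1,\dots,T_k]$ is a polynomial ring over a domain; hence $\Lambda_\mathfrak{n}$ is injective (see \S\ref{DefLambdaBig}) and $\nub_\mathfrak{n}=\nu_\mathfrak{n}$. Geometrically, $\Bl_\mathfrak{n}(\Spec A)$ is already smooth with irreducible exceptional divisor $E$, so $\mathfrak{n}$ has a unique Rees valuation $v_\mathfrak{n}$, namely the $\mathfrak{n}$-adic order, normalized so that $v_\mathfrak{n}(\mathfrak{n})=1$.

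Next I would compare this with the Rees valuations of $\mathfrak{n}C$. By Remark~\ref{NuBarBlupNorm}, they arise from the irreducible components $H_1,\dots,H_\ell$ of the exceptional divisor of the normalized blowup $\widetilde{X}\to\Spec C$ at $\mathfrak{n}C$. The universal property of blowing up produces a factorization $\widetilde{X}\to\Bl_\mathfrak{n}(\Spec A)$, under which each $H_j$ maps into $E$. The core technical claim is that each $H_j$ actually \emph{dominates} $E$; granting this, setting $m_j:=v_j(\mathfrak{n}C)\in\mathbb{Z}_{>0}$, pulling back the Cartier divisor $E$ yields $v_j(a)=m_j\,v_\mathfrak{n}(a)$ for every $a\in A$ and every $j$. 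Both equalities then fall out: for $a\in A$,
\[
\nub_\mathfrak{q}(a)=\min_j\frac{v_j(a)}{m_j}=v_\mathfrak{n}(a)=\nub_\mathfrak{n}(a),
\]
and for $a\in A$, $f\in C$, additivity of each $v_j$ together with the independence of $v_j(a)/m_j$ from $j$ pulls $v_\mathfrak{n}(a)$ outside the minimum:
\[
\nub_\mathfrak{q}(af)=\min_j\frac{v_j(a)+v_j(f)}{m_j}=v_\mathfrak{n}(a)+\min_j\frac{v_j(f)}{m_j}=\nub_\mathfrak{q}(a)+\nub_\mathfrak{q}(f).
\]

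The main obstacle is establishing that every $H_j$ dominates $E$. The easy inequality $\nub_{\mathfrak{n}C}(a)\geq\nub_\mathfrak{n}(a)$ on $A$ only requires $v_j|_A\geq m_jv_\mathfrak{n}$, which follows from $H_j$ mapping into $E$; but the additivity assertion requires the stronger identity $v_j|_A=m_jv_\mathfrak{n}$ for \emph{every} Rees valuation $v_j$, not merely for one. This is where both the reduction hypothesis and the condition $\mathfrak{q}\cap A=\mathfrak{n}$ become decisive: the reduction forces the analytic spread of $\mathfrak{q}$ to coincide with $\dim A$, and $\mathfrak{q}\cap A=\mathfrak{n}$ combined with the regularity of $A/\mathfrak{n}$ prevents the components of the exceptional fibre of $\widetilde{X}$ from collapsing onto proper subvarieties of $E$.
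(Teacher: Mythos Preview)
Your approach is essentially the paper's: pass to Rees valuations via Remark~\ref{NuBarBlupNorm}, note that regularity of $A$ and $A/\mathfrak{n}$ gives $\mathfrak{n}$ a single Rees valuation $v_0$, set up the commutative blowup square, and derive both identities from the assertion that each Rees valuation $v_j$ of $\mathfrak{q}$ restricts on $A$ to a multiple of $v_0$; the minimum computations you write down are exactly the ones in the paper. Blowing up $\mathfrak{n}C$ rather than $\mathfrak{q}$ is immaterial, since the reduction hypothesis forces the two normalized blowups (hence the sets of Rees valuations) to coincide.

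The one substantive point is how the dominance of each $H_j$ over $E$ is handled. The paper obtains the square $\overline{X}\to\overline{Z}$ from \cite[Lemma~4.2]{COA} and then simply records that ``every valuation $v_i$ is an extension of $v_0$ to $C$'', without further argument. Your final paragraph instead offers heuristics about analytic spread and about the hypotheses ``preventing collapsing''; as written these are not a proof. Note that in all the applications in this paper the map $A\to C$ is finite (it is the map $S\to B'$ of \S\ref{setting_prueba}), and then dominance is a one-line dimension count: the induced morphism $\overline{X}\to\overline{Z}$ is finite, each $H_j$ has codimension one in $\overline{X}$, and $E$ is irreducible of codimension one in $\overline{Z}$, so the image of $H_j$ must be all of $E$. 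That is the clean replacement for your closing paragraph.
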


\begin{proof}  
Set $X=\Spec(C)$ and $Z=\Spec(A)$.
Let $\overline{X}$ be the normalized blow up of $X$ at the ideal ${\mathfrak q}$ and let
$\overline{Z}$ be the blow up of $Z$ at ${\mathfrak n}$. Then there is  a  commutative diagram
		\begin{equation*}
		\xymatrix@R=1pc{
			X \ar[d] & \ar[l] \overline{X} \ar[d] \\ Z  &  \ar[l] \overline{Z},
		}
		\end{equation*}
		(see \cite[Lemma 4.2]{COA}). 
The exceptional divisor $E$ of the blow up $\overline{Z}\to Z$ defines only a valuation $v_0$ in $A$. The exceptional divisor of $\overline{X}\to X$ defines valuations $v_1,\ldots,v_{\ell}$ 
as in Remark~\ref{NuBarBlupNorm}.
Note that every valuation $v_i$ is an extension of $v_0$ to $C$.
Then if $a\in A$: 
\begin{equation*}
\bar{\nu}_{\mathfrak n}(a)=\frac{v_0(a)}{v_0({\mathfrak n})}=
\frac{v_i(a)}{v_i({\mathfrak n})}=\bar{\nu}_{\mathfrak{q}}(a) \qquad \text{for all } i=1,\ldots,\ell.
\end{equation*}
On the other hand, for each $i\in \{1,\ldots,\ell\}$, 
\begin{equation*}
\frac{v_i(af)}{v_i(\mathfrak{q})}=\frac{v_i(a)}{v_i(\mathfrak{q})}+\frac{v_i(f)}{v_i(\mathfrak{q})}=\bar{\nu}_{\mathfrak{q}}(a)+\frac{v_i(f)}{v_i(\mathfrak{q})}.
\end{equation*}
And, again, by  Remark \ref{NuBarBlupNorm} be have the required equality.
\end{proof}

\begin{parrafo}
\textbf{Notation.}
Along this paper we will interested in computing the function order $\nub$ at points $\zeta$ in a variety $X$ over a field $k$. We will be distinguishing between $\nub_{\zeta}$ and $\nub_{\mathfrak{p}_{\zeta}}$
where $\mathfrak{p}_{\zeta}$ is the prime defining $\zeta$ in an affine open set of $X$.
In the first case, for an element $f\in\calo_{X,\zeta}$, $\nub_{\zeta}(f)$ is computed using the function $\nub$ for the local ring $\calo_{X,\zeta}$ at the maximal ideal
$\mathfrak{m}_{\zeta}=\mathfrak{p}_{\zeta}\calo_{X,\zeta}$.
In the second case, for an element $f\in B$, where $\Spec(B)\subset X$ is an affine open containing $\zeta$, $\nu_{\mathfrak{p}_{\zeta}}$ is computed using the function $\nub$ for the ring $B$ at the prime ideal $\mathfrak{p}_{\zeta}$.
Note that 
$\nub_{\zeta}(f)\geq \nub_{\mathfrak{p}_{\zeta}}(f)$.
If the local ring $\calo_{X,\zeta}$ is regular then we will use the standard notation $\nu_{\zeta}$ for the usual order function, and then
$\nu_{\zeta}=\nub_{\zeta}$.
\end{parrafo}	

\section{The Samuel slope of a local ring} \label{Seccion3}

Let $(A,\mathfrak{m})$ be a local Noetherian ring.
We will focus on some elements in the associated graded ring $\Gr_{\mathfrak{m}}(A)$ which are nilpotent.
They will be used to define the Samuel slope of the local ring.

\begin{parrafo} \label{DefLambdaBig} {\bf Degree one nilpotents in $\Gr_{\mathfrak{m}}(A)$.} \cite[\S 0.7, \S 0.21 and  \S  0.22]{LejeuneTeissier1974}
For a local ring $(A,\mathfrak{m})$, consider
	\begin{equation*}
	\mathfrak{m}^{(\geq 1)}:=\{g\in A\mid \bar{\nu}_{\mathfrak{m}}(g)\geq 1\}, \qquad \text{ and } \qquad 
	\mathfrak{m}^{(> 1)}:=\{g\in A\mid \bar{\nu}_{\mathfrak{m}}(g)> 1\}.
	\end{equation*}
	Note that $\mathfrak{m}^{(\geq 1)}$ and $\mathfrak{m}^{(>1)}$ are ideals in $A$.
There is a natural morphism of $k(\mathfrak{m})$-vector spaces, 
$$	\begin{array}{rccl}
	\lambda_{\mathfrak{m}}: & \mathfrak{m}/\mathfrak{m}^2 & \longrightarrow & 
	\mathfrak{m}^{(\geq 1)}/\mathfrak{m}^{(>1)} \\
 &   f+\mathfrak{m}^{2} & \mapsto  & 	\lambda_{\mathfrak{m}}(f+\mathfrak{m}^{2}):=f+\mathfrak{m}^{(>1)}, 
	\end{array}$$
whose  kernel is the subspace generated by the degree one nilpotents of $\Gr_{\mathfrak{m}}(A)$. 
\end{parrafo}

\begin{remark} \label{RemNilpotent}
If $A$ is a local regular ring, then $\nub_{\mathfrak{m}}=\nu_{\mathfrak{m}}$ is the usual order function and
$\lambda_{\mathfrak{m}}$ is an isomorphism.
If $A$ is not regular, then we have that 
$\dim_{k(\mathfrak{m})}{\mathfrak m}/{\mathfrak m}^2=d+t$, with $t>0$
being the excess of the embedding dimension of $(A,\mathfrak{m})$.
Note that
$d=\dim(A)=\dim(\Gr_{\mathfrak{m}}(A))=\dim(\Gr_{\mathfrak{m}}(A))_{\text{red}}$.
Therefore, if $x_1,\ldots,x_{d+t}\in\mathfrak{m}$ is a minimal set of generators, then there are at least $d$
elements $x_{i_1},\ldots,x_{i_d}$, such that their classes in $\Gr_{\mathfrak{m}}(A)$ are 
not nilpotent.
This means that $\nub_{\mathfrak{m}}(x_{i_j})=1$, for $j=1,\ldots,d$.
\end{remark}

Assume that $\nub_{\mathfrak{m}}(x_{1})=\ldots=\nub_{\mathfrak{m}}(x_{d})=1$.
The minimum of $\nub_{\mathfrak{m}}(x_{d+1}),\ldots,\nub_{\mathfrak{m}}(x_{d+t})$
defines a slope with respect to the chosen generators.
The Samuel slope is the supremum of all these possible coordinate dependent slopes.
\begin{definition}\label{DefSSl}
Let $(A,\mathfrak{m})$ is a Noetherian local ring of dimension $d$ and embedding dimension $d+t$,
with $t>0$.
Let $\mathbf{x}=\{x_1,\ldots,x_{d+t}\}\subset\mathfrak{m}$ be a minimal set of generators of $\mathfrak{m}$.
We define the \emph{slope with respect to $\mathbf{x}$}  as
$$\text{Sl}_{\mathbf{x}}(A):=\min\{\nub_{\mathfrak{m}}(x_{d+1}),\ldots,\nub_{\mathfrak{m}}(x_{d+t})\}.$$
The \emph{Samuel slope of the local ring $A$} is 
\begin{equation*}
{\mathcal S}\text{-Sl}(A):=
\sup\limits_{\mathbf{x}}\text{Sl}_{\mathbf{x}}(A)=
\sup\limits_{\mathbf{x}} \left\{
\min\left\{\bar{\nu}_{\mathfrak{m}}(x_{d+1}),\ldots,\bar{\nu}_{\mathfrak{m}}(x_{d+t})\right\} 
\right\},
\end{equation*}
where the supremum is taken over all possible minimal set of generators $\mathbf{x}$ of $\mathfrak{m}$.
\end{definition}

\begin{example}
Let $R=k[x_1,x_2,x_3]_{\langle x_1,x_2,x_3\rangle}$, set 
$A=R/\langle x_2^2+x_1^5,\, x_3^2+x_1^7\rangle$, and let $\mathfrak{m}\subset A$ be the maximal ideal.
Then 
$\nub_{\mathfrak{m}}(x_1)=1$, $\nub_{\mathfrak{m}}(x_2)=5/2$ and $\nub_{\mathfrak{m}}(x_3)=7/2$.
It can be checked that ${\mathcal S}\text{-Sl}(A)=5/2$.
\end{example}

\begin{remark} \label{cota_dim_ker_b}
Let $\Gamma$ be the set of all possible minimal ordered sets of generators $\mathbf{x}$ of $\mathfrak{m}$.
For $\mathbf{x}=\{x_1,\ldots,x_{d+t}\}\in\Gamma$ let
$\alpha(\mathbf{x}):= \#\{i \mid \nub_{\mathfrak{m}}(x_i)>1\}$.
Note that 
$$r_{\mathfrak{m}}:=\dim_{k(\mathfrak{m})}\ker(\lambda_{\mathfrak{m}})=
\max\left\lbrace \alpha(\mathbf{x}) \mid \mathbf{x}\in\Gamma \right\rbrace.$$

Since, by Remark \ref{RemNilpotent}, in any set of minimal generators there are at
least $d$ elements with $\nub_{\mathfrak{m}}(x_i)=1$, we have that
$$0\leq \dim_{k(\mathfrak{m})}\ker(\lambda_{\mathfrak{m}})\leq t.$$ 
\end{remark}

\begin{definition}\label{Def_lambda_sequence}  Let $(A,\mathfrak{m})$ be a Noetherian local ring.
Suppose that the embedding dimension of $(A,\mathfrak{m})$ is $d+t$ with $t>0$.
We say that $(A,\mathfrak{m})$ is in the \emph{extremal case} if $\dim \ker(\lambda_{\mathfrak{m}})=t$.
Otherwise we say that $(A,\mathfrak{m})$ is in the \emph{non-extremal case}.
If $\dim \ker(\lambda_{\mathfrak{m}})=t$, then we say that a sequence of elements 
$\gamma_1,\ldots,\gamma_t\in {\mathfrak m}$ is a {\em $\lambda_{\mathfrak{m}}$-sequence} if their classes $\bar{\gamma}_i\in\mathfrak{m}/\mathfrak{m}^2$ form a basis of $\ker(\lambda_{\mathfrak{m}})$. 
In other words, $\gamma_1,\ldots,\gamma_t\in {\mathfrak m}$ is a $\lambda_{\mathfrak{m}}$-sequence if
their classes in $\Gr_{\mathfrak{m}}(A)$ are nilpotent and $\gamma_1,\ldots,\gamma_t$  are part of a 
minimal set of generators of $\mathfrak{m}$.
\end{definition} 
	
\begin{remark}\label{Def_Sam_Slope}  
Let $(A,\mathfrak{m})$ be a Noetherian local ring. 
Suppose that the embedding dimension of $(A,\mathfrak{m})$ is $d+t$ with $t>0$.
We can express the Samuel slope in terms of 
$\lambda_{\mathfrak{m}}$-sequences as follows: 
	\begin{itemize}
		\item If $\dim\ker(\lambda_{\mathfrak{m}})<t$ (non-extremal case), then  
		${\mathcal S}\text{-Sl}(A)=1$;
		\item If $\dim\ker(\lambda_{\mathfrak{m}})=t$ (extremal case), then  
		\begin{equation*}
		{\mathcal S}\text{-Sl}(A)=
		\sup\limits_{{\lambda_{\mathfrak{m}}}{\text{-sequence}}} \left\{
		\min\left\{\bar{\nu}_{\mathfrak{m}}(\gamma_1),\ldots,\bar{\nu}_{\mathfrak{m}}(\gamma_t)\right\} 
		\right\},
		\end{equation*}
where the supremum is taken over all the $\lambda_{\mathfrak{m}}$-sequences in the local ring $(A,\mathfrak{m})$.
\end{itemize}
\end{remark}

\begin{remark}
Suppose that $X$ is an equidimensional variety of dimension $d$ defined over a perfect field $k$, and $\zeta\in X$ a (non-necessarily closed) point of multiplicity $m>1$.
Set $d_{\zeta}=\dim(\calo_{X,\zeta})$ and
$d_{\zeta}+t_{\zeta}=\dim_{k(\zeta)}(\mathfrak{m}_{\zeta}/\mathfrak{m}_{\zeta}^2)$ be the embedding dimension at $\zeta$, where $k(\zeta)$ denotes the residue field of $\calo_{X,\zeta}$.
The Samuel slope of $X$ at $\zeta$ is the Samuel slope of the local ring $\calo_{X,\zeta}$,
and a $\lambda_{\zeta}$-sequence will be a
$\lambda_{\mathfrak{m}_{\zeta}}$-sequence. 
\end{remark}
\medskip

\noindent
{\bf The Samuel slope and \'etale extensions}.
\medskip

\noindent
To prove Theorem \ref{maintheorem} we will have to work in an
\'etale neighborhood of a given point.
To be able to use \'etale extensions in our arguments, we will  first prove that the dimension of  $\ker(\lambda_{\zeta})$ is an invariant under  such  extensions. 
From here, it follows that if $X'\to X$ is an \'etale morphism mapping $\zeta'\in X'$ to $\zeta\in X$ then 
$\mathcal{S}\text{-Sl}(\mathcal{O}_{X,\xi})\leq \mathcal{S}\text{-Sl}(\mathcal{O}_{X',\xi'})$.
We do not know if the equality holds in general.
However we can prove it for some special cases, which will be enough for our purposes.
\medskip

\begin{lemma}\label{nucleo_etale}
Let $\varphi:(A,\mathfrak{m})\to (A',\mathfrak{m}')$ be an \'etale homomorphism of Noetherian local rings.
Then
\begin{equation*}
r_{\mathfrak{m}}=\dim_{k(\mathfrak{m})}\ker(\lambda_{\mathfrak{m}})=r_{\mathfrak{m}'}=\dim_{k(\mathfrak{m}')}\ker(\lambda_{\mathfrak{m}'}).
\end{equation*}
\end{lemma}

\begin{proof}
Let $\mathcal{N}$ (resp. $\mathcal{N}'$) be the nilradical of $\Gr_{\mathfrak{m}}(A)$
(resp. of $\Gr_{\mathfrak{m}'}(A')$).
Note that $\Gr_{\mathfrak{m}'}(A')=k(\mathfrak{m}')\otimes\Gr_{\mathfrak{m}}(A)$ 
is an \'etale extension of $\Gr_{\mathfrak{m}}(A)$.
Therefore we have that $\mathcal{N}'=\mathcal{N}\Gr_{\mathfrak{m}'}(A')$. 
The lemma follows since $\ker(\lambda_{\mathfrak{m}})=(\mathcal{N}+\mathfrak{m}^2)/\mathfrak{m}^2$.
\end{proof}

\begin{proposition} \label{Prop_Slope_etale} 
Let $\varphi:(A,\mathfrak{m})\to (A',\mathfrak{m}')$ be an \'etale homomorphism of Noetherian local rings.
If $k(\mathfrak{m})=k(\mathfrak{m}')$, then
\begin{equation*}
\mathcal{S}\text{-Sl}(A)=\mathcal{S}\text{-Sl}(A').
\end{equation*}
\end{proposition}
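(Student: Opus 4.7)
The plan is to treat the non-extremal and extremal cases separately. By Proposition \ref{nucleo_etale}, together with the equality $k(\mathfrak{m})=k(\mathfrak{m}')$ and the fact that flatness preserves Krull dimension, $(A,\mathfrak{m})$ and $(A',\mathfrak{m}')$ share the same embedding dimension and the same $\dim\ker\lambda$, so they are simultaneously extremal or non-extremal; in the non-extremal case both slopes equal $1$ by definition. The real content is therefore the extremal case, where one must compare two suprema of minima of $\bar\nu$-values indexed by the $\lambda$-sequences.

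Two preparatory facts would drive everything. First, since $\varphi$ is étale and unramified with trivial residue field extension, $\mathfrak{m}A'=\mathfrak{m}'$ and the induced maps $A/\mathfrak{m}^n\to A'/(\mathfrak{m}')^n$ are isomorphisms for every $n\geq 1$: they realize an étale local extension of Artinian (hence Henselian) local rings with trivial residue field extension, and such extensions are trivial. In particular, $A$ is dense in $A'$ in the $\mathfrak{m}'$-adic topology: for every $f\in A'$ and every $N\geq 1$ there exists $g\in A$ with $\varphi(g)-f\in(\mathfrak{m}')^N$, and $g\in\mathfrak{m}$ whenever $f\in\mathfrak{m}'$. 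Second, faithfully flat descent along $\varphi$ (étale, hence with reduced geometric fibers) gives $\overline{IA'}=\overline{I}\,A'$ and $\overline{IA'}\cap A=\overline{I}$ for every ideal $I\subseteq A$, which by Corollary \ref{nub_integral} yields $\bar\nu_{\mathfrak{m}'}(\varphi(g))=\bar\nu_{\mathfrak{m}}(g)$ for every $g\in A$.

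The inequality $\mathcal{S}\text{-Sl}(A)\leq\mathcal{S}\text{-Sl}(A')$ is then straightforward: a $\lambda_{\mathfrak{m}}$-sequence $\gamma_1,\ldots,\gamma_t$ maps, via $\varphi$, to a $\lambda_{\mathfrak{m}'}$-sequence (using the identification $\ker\lambda_{\mathfrak{m}'}=\ker\lambda_{\mathfrak{m}}$ visible in the proof of Proposition \ref{nucleo_etale}), with identical Samuel values by the second fact; passing to the supremum gives the inequality. For the reverse inequality, fix $\epsilon>0$ and choose a $\lambda_{\mathfrak{m}'}$-sequence $\gamma'_1,\ldots,\gamma'_t$ with $\alpha:=\min_i\bar\nu_{\mathfrak{m}'}(\gamma'_i)>\mathcal{S}\text{-Sl}(A')-\epsilon$. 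Pick an integer $N>\alpha$; the first fact produces $g_i\in\mathfrak{m}$ with $\gamma'_i-\varphi(g_i)\in(\mathfrak{m}')^N$. Since $N\geq 2$, the classes of $g_i$ and $\gamma'_i$ coincide in $\mathfrak{m}'/(\mathfrak{m}')^2=\mathfrak{m}/\mathfrak{m}^2$, so $(g_1,\ldots,g_t)$ is a $\lambda_{\mathfrak{m}}$-sequence. Because $\bar\nu_{\mathfrak{m}'}\bigl((\mathfrak{m}')^N\bigr)\geq N>\alpha\geq\bar\nu_{\mathfrak{m}'}(\gamma'_i)$, Remark \ref{minimo_suma} gives $\bar\nu_{\mathfrak{m}'}(\varphi(g_i))=\bar\nu_{\mathfrak{m}'}(\gamma'_i)\geq\alpha$, and the second fact transfers this to $\bar\nu_{\mathfrak{m}}(g_i)\geq\alpha$. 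Thus $\mathcal{S}\text{-Sl}(A)\geq\alpha>\mathcal{S}\text{-Sl}(A')-\epsilon$; letting $\epsilon\to 0$ (and taking $\alpha$ arbitrarily large when $\mathcal{S}\text{-Sl}(A')=\infty$) closes the argument.

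The main obstacle I anticipate is the descent identity $\overline{IA'}\cap A=\overline{I}$, which is where I expect the ``crucial hint'' of Cutkosky to enter. A clean route would go through the Rees-valuation characterization of $\bar\nu$ in Remark \ref{NuBarBlupNorm}: since étale maps commute with normalization, the normalized blow-up of $\mathfrak{m}'$ in $\Spec A'$ is the étale base change of the normalized blow-up of $\mathfrak{m}$ in $\Spec A$, so the Rees valuations of $\mathfrak{m}'$ are exactly the extensions along $\varphi$ of Rees valuations of $\mathfrak{m}$, and by unramifiedness these extensions preserve the values on $A$ as well as the value on $\mathfrak{m}$, yielding the required equality $\bar\nu_{\mathfrak{m}}(g)=\bar\nu_{\mathfrak{m}'}(\varphi(g))$.
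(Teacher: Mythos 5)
Your argument is correct and follows the same skeleton as the paper's: dispose of the non-extremal case via Proposition~\ref{nucleo_etale}, then in the extremal case approximate a given $\lambda_{\mathfrak{m}'}$-sequence modulo a high power of $\mathfrak{m}'$ to produce a $\lambda_{\mathfrak{m}}$-sequence in $A$ with the same $\bar\nu$-values, and conclude by comparing suprema. Where you diverge is in how you justify the central identity $\bar\nu_{\mathfrak{m}}(g)=\bar\nu_{\mathfrak{m}'}(\varphi(g))$ for $g\in A$: you invoke descent of integral closure along the flat morphism with geometrically regular fibers, or alternatively the Rees-valuation description of Remark~\ref{NuBarBlupNorm}. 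The paper's (terse) proof instead gets this directly from what you already isolated as your ``Fact~1'': since $k(\mathfrak{m})=k(\mathfrak{m}')$ and $\mathfrak{m}A'=\mathfrak{m}'$, the canonical maps $A/\mathfrak{m}^a\to A'/(\mathfrak{m}')^a$ are isomorphisms, hence for $g\in A$ and all $k,a$ one has $g^k\in\mathfrak{m}^a\iff\varphi(g)^k\in(\mathfrak{m}')^a$, so $\nu_{\mathfrak{m}}(g^k)=\nu_{\mathfrak{m}'}(\varphi(g)^k)$ and therefore $\bar\nu_{\mathfrak{m}}(g)=\bar\nu_{\mathfrak{m}'}(\varphi(g))$ straight from the definition~(\ref{Def_Sam_lim}). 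That elementary route is worth noting because it avoids citing descent of integral closure and, unlike your Rees-valuation alternative, does not inherit the reducedness hypothesis of Remark~\ref{NuBarBlupNorm}, which is not among the proposition's assumptions. One small slip: you write ``$\bar\nu_{\mathfrak{m}'}\bigl((\mathfrak{m}')^N\bigr)\geq N>\alpha\geq\bar\nu_{\mathfrak{m}'}(\gamma'_i)$'', but since $\alpha=\min_i\bar\nu_{\mathfrak{m}'}(\gamma'_i)$ the last inequality points the wrong way; to invoke Remark~\ref{minimo_suma} and get the exact equality $\bar\nu_{\mathfrak{m}'}(\varphi(g_i))=\bar\nu_{\mathfrak{m}'}(\gamma'_i)$ you should take $N$ larger than each $\bar\nu_{\mathfrak{m}'}(\gamma'_i)$ (treating any infinite value separately), though the weaker estimate $\bar\nu_{\mathfrak{m}'}(\varphi(g_i))\geq\min\{N,\bar\nu_{\mathfrak{m}'}(\gamma'_i)\}\geq\alpha$ already suffices for your conclusion.
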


\begin{proof} 
Let $d$ be the Krull dimension of $A$.
Suppose that $\dim_{k(\mathfrak{m})}{\mathfrak m}/{\mathfrak m}^2=d+t$, with $t>0$.
By Lemma \ref{nucleo_etale}, the result is immediate if $\dim\ker(\lambda_{\mathfrak{m}})<t$, and in fact, in this case, the hypothesis  $k(\mathfrak{m})=k(\mathfrak{m}')$   is not needed. 

Suppose now that $\dim\ker(\lambda_{\mathfrak{m}})=t$.
Since $k(\mathfrak{m})=k(\mathfrak{m}')$, it follows that  
$\Gr_{\mathfrak{m}}(A)=\Gr_{\mathfrak{m}'}(A')$. 
Observe that 
if $\theta'\in\mathfrak{m}'$ then, for each $n\in\mathbb{N}$, there exists some $\rho_n\in\mathfrak{m}$ 
such that $\theta'-\rho_n\in(\mathfrak{m}')^n$.
This means that there is some  $n\gg 0$ such that $\bar{\nu}_{\mathfrak{m}}(\rho_n)=\bar{\nu}_{\mathfrak{m}'}(\theta')$.
From here we can conclude  that given a $\lambda_{\mathfrak{m}'}$-sequence 
$\theta'_1,\ldots,\theta'_t\in\mathfrak{m}'$ 
we can always find  $\theta_1,\ldots,\theta_t\in\mathfrak{m}$ such that :
\begin{itemize}
	\item $\theta_1,\ldots,\theta_t$ is a $\lambda_{\mathfrak{m}}$-sequence of $(A,\mathfrak{m})$ and
	\item $\bar{\nu}_{\mathfrak{m}}(\theta_i)=\bar{\nu}_{\mathfrak{m}'}(\theta'_i)$ for $i=1,\ldots,t$.
\end{itemize}
The result now follows by the definition of the Samuel slope and Remark \ref{Def_Sam_Slope}.
\end{proof}

The following result will allow us to compare the Samuel slope of a local ring, at a non closed point of a variety, before and after an \'etale extension (at least under some special assumptions). This will be used in the proof of Theorem \ref{maintheorem}.

\begin{proposition}\label{prop_sucesiones}
Let $(A,\mathfrak{m})$ be a formally $d$-equidimensional local Noetherian ring.
Let $\mathfrak{p}\subset A$ be a prime ideal such that the quotient ring $A/\mathfrak{p}$ is a $(d-r)$-dimensional regular ring, with $r>0$, and $\mult_{\mathfrak{m}}(A)=\mult_{\mathfrak{p}A_{\mathfrak{p}}}(A_{\mathfrak{p}})=m>1$.
Suppose that:
    \begin{itemize}
        \item the excess of embedding dimension of $(A,\mathfrak{m})$ is $t$ and  coincides with the excess of embedding dimension of $(A_{\mathfrak{p}},\mathfrak{p}A_{\mathfrak{p}})$;
        \item both $(A,\mathfrak{m})$ and  $(A_{\mathfrak{p}},\mathfrak{p}A_{\mathfrak{p}})$ are in the extremal case.
    \end{itemize}
Let $\varphi:(A,\mathfrak{m})\to (A',\mathfrak{m}')$ be an \'etale homomorphism of local rings, and $\mathfrak{p}'\subset A'$ be a prime ideal such that $\mathfrak{p}'\cap A=\mathfrak{p}$.
Assume that:
\begin{itemize}
    \item $k(\mathfrak{m})=k(\mathfrak{m}')$;
    \item there is $\lambda_{\mathfrak{m}'}$-sequence at $A'$, $\gamma_1',\ldots,\gamma_t'$, that is also a $\lambda_{\mathfrak{p}'A'_{\mathfrak{p}'}}$-sequence.
\end{itemize}
Then there is a $\lambda_{\mathfrak{p}A_{\mathfrak{p}}}$-sequence at $A$, $\gamma_1,\ldots, \gamma_t$, such that:
$$\min_i\{\nub_{\mathfrak{p}}(\gamma_i)\}\geq
 \min_i\{\nub_{\mathfrak{p}'}(\gamma_i')\},
\qquad \text{and}\qquad
\min_i\{\nub_{\mathfrak{p}A_{\mathfrak{p}}}(\gamma_i)\}\geq
 \min_i\{\nub_{\mathfrak{p}'A'_{\mathfrak{p}'}}(\gamma_i')\}.$$
In particular $\SSl(A_{\mathfrak{p}})\geq \min_i\{\nub_{\mathfrak{p}'A'_{\mathfrak{p}'}}(\gamma_i')\}$.
\end{proposition}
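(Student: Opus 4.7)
The plan is to adapt the approximation argument from the proof of Proposition~\ref{Prop_Slope_etale} so as to simultaneously control the $\lambda$-sequence condition and the asymptotic Samuel values at $\mathfrak{p}$ and at $\mathfrak{p}A_{\mathfrak{p}}$.

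First I would record the consequences of the étale hypothesis: since $\varphi$ is étale and $k(\mathfrak{m})=k(\mathfrak{m}')$, we have $\mathfrak{m}A'=\mathfrak{m}'$, the maps $A/\mathfrak{m}^N\to A'/(\mathfrak{m}')^N$ are isomorphisms for every $N$, and by faithful flatness $(\mathfrak{m}')^N\cap A=\mathfrak{m}^N$; consequently $\bar{\nu}_{\mathfrak{m}}(a)=\bar{\nu}_{\mathfrak{m}'}(a)$ for every $a\in A$. The same reasoning applied to the étale local map $A_{\mathfrak{p}}\to A'_{\mathfrak{p}'}$ (which satisfies $\mathfrak{p}A_{\mathfrak{p}}\cdot A'_{\mathfrak{p}'}=\mathfrak{p}'A'_{\mathfrak{p}'}$ by the unramified property) yields $\bar{\nu}_{\mathfrak{p}A_{\mathfrak{p}}}(a)=\bar{\nu}_{\mathfrak{p}'A'_{\mathfrak{p}'}}(a)$ for every $a\in A_{\mathfrak{p}}$.

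The main construction is to produce, for each $i\in\{1,\ldots,t\}$ and each integer $N\gg 0$, an element $\gamma_i\in \mathfrak{p}$ such that $\gamma_i-\gamma_i'\in (\mathfrak{p}')^N$. I would first take a preliminary lift $\tilde{\gamma}_i\in A$ with $\tilde{\gamma}_i-\gamma_i'\in (\mathfrak{m}')^N$, obtained from the isomorphism $A/\mathfrak{m}^N\cong A'/(\mathfrak{m}')^N$. Because $\gamma_i'\in \mathfrak{p}'$ and the map $A/\mathfrak{p}\hookrightarrow A'/\mathfrak{p}'$ is injective (from $\mathfrak{p}'\cap A=\mathfrak{p}$), the image of $\tilde{\gamma}_i$ in $A/\mathfrak{p}$ lies in $(\mathfrak{m}/\mathfrak{p})^N$; the regularity of $A/\mathfrak{p}$ then permits subtracting a suitable element of $\mathfrak{m}^N$ to land inside $\mathfrak{p}$. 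Iterating this correction on the $\mathfrak{p}'$-adic filtration (an Artin--Rees style argument inside $A'$) converts the approximation modulo $(\mathfrak{m}')^N$ into one modulo $(\mathfrak{p}')^N$ for $N$ as large as desired.

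With the $\gamma_i$'s so constructed, taking $N>\bar{\nu}_{\mathfrak{p}'}(\gamma_i')$ and invoking Remark~\ref{minimo_suma} gives $\bar{\nu}_{\mathfrak{p}'}(\gamma_i)=\bar{\nu}_{\mathfrak{p}'}(\gamma_i')$ and $\bar{\nu}_{\mathfrak{p}'A'_{\mathfrak{p}'}}(\gamma_i)=\bar{\nu}_{\mathfrak{p}'A'_{\mathfrak{p}'}}(\gamma_i')$. Combined with the faithfully flat descent equalities of the first step and the fact that $\bar{\nu}_{\mathfrak{p}}(a)=\bar{\nu}_{\mathfrak{p}A_{\mathfrak{p}}}(a)$ for $a\in A$ (the Rees valuations of $\mathfrak{p}$ in $A$ and of $\mathfrak{p}A_{\mathfrak{p}}$ in $A_{\mathfrak{p}}$ coincide, cf.~Remark~\ref{NuBarBlupNorm}), this yields both required inequalities on $\bar{\nu}_{\mathfrak{p}}(\gamma_i)$ and $\bar{\nu}_{\mathfrak{p}A_{\mathfrak{p}}}(\gamma_i)$. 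Since $\gamma_i\equiv \gamma_i'$ modulo $(\mathfrak{p}'A'_{\mathfrak{p}'})^2$, their classes correspond under the map $\mathfrak{p}A_{\mathfrak{p}}/(\mathfrak{p}A_{\mathfrak{p}})^2\to \mathfrak{p}'A'_{\mathfrak{p}'}/(\mathfrak{p}'A'_{\mathfrak{p}'})^2$; then Proposition~\ref{nucleo_etale} applied to $A_{\mathfrak{p}}\to A'_{\mathfrak{p}'}$, together with the assumed coincidence of excess embedding dimensions, shows that the $\gamma_i$'s form a $\lambda_{\mathfrak{p}A_{\mathfrak{p}}}$-sequence. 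The bound on $\SSl(A_{\mathfrak{p}})$ is then immediate from Definition~\ref{Def_Sam_Slope}.

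The main obstacle is the second paragraph: because $(\mathfrak{m}')^N$ can contain elements of arbitrarily small $\mathfrak{p}'$-order, a direct $(\mathfrak{m}')^N$-approximation does not suffice, and upgrading it to a $(\mathfrak{p}')^N$-approximation while keeping the lift inside $\mathfrak{p}$ is the delicate technical step, relying essentially on the regularity of $A/\mathfrak{p}$ and on the étale structure to control the filtration in $A'$.
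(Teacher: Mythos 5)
Your outline correctly identifies that the crux is passing from an $(\mathfrak{m}')^N$-approximation to something strong enough to control $\bar{\nu}_{\mathfrak{p}'}$, but the specific upgrade you propose — producing $\gamma_i\in\mathfrak{p}$ with $\gamma_i-\gamma_i'\in(\mathfrak{p}')^N$ for $N$ arbitrarily large — is not achievable in this generality, and this is a genuine gap rather than a detail. The hypothesis $k(\mathfrak{m})=k(\mathfrak{m}')$ does make $A$ dense in $A'$ for the $\mathfrak{m}'$-adic topology, but it tells you nothing about $\mathfrak{p}'$-adic density: the residue field extension $k(\mathfrak{p})\to k(\mathfrak{p}')$ at the non-maximal prime can be strictly nontrivial even when the residue field at the closed point does not move. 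Concretely, take $A=\mathbb{Q}[x,y]_{(x,y)}$, $\mathfrak{p}=(y)$, and $A'$ the localization of $A[z]/(z^2-(1+x))$ at $(x,y,z-1)$; here $k(\mathfrak{m})=k(\mathfrak{m}')=\mathbb{Q}$, but $k(\mathfrak{p}')$ is a quadratic extension of $k(\mathfrak{p})=\mathbb{Q}(x)$, so $zy\in\mathfrak{p}'$ already cannot be approximated by an element of $A$ modulo $(\mathfrak{p}')^2$. Your ``Artin--Rees style'' step cannot repair this, because the obstruction lives in the residue field at $\mathfrak{p}'$, not in any filtration one can refine. (The preliminary correction landing $\tilde\gamma_i$ inside $\mathfrak{p}$ is fine; it is the subsequent claim of $\mathfrak{p}'$-adic density that fails.)

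The paper's proof avoids this problem by never asking for $(\mathfrak{p}')^N$-approximation. It works instead with the modules $\mathfrak{m}^n/\mathfrak{p}^n\mathfrak{m}$, which (because $\Gr_{\mathfrak{m}}(A)=\Gr_{\mathfrak{m}'}(A')$) literally coincide with $(\mathfrak{m}')^n/(\mathfrak{p}')^n\mathfrak{m}'$, and it approximates $\gamma_i'$ only modulo the ideal $\langle\gamma_1',\ldots,\gamma_t'\rangle + (\mathfrak{p}')^n\mathfrak{m}'$. This weaker target still forces $\bar{\nu}_{\mathfrak{p}'}(\gamma_i-\gamma_i')\geq\min_j\bar{\nu}_{\mathfrak{p}'}(\gamma_j')$, which is what the inequality needs; the price is a substantially more structured argument. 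The key input making the iteration go through is the identification (Step 2 of the paper's proof, via Lipman's theorem \cite[\S 5, Theorem 5]{Lipman}) of the kernel of $\bigoplus_n\mathfrak{p}'^n/\mathfrak{p}'^n\mathfrak{m}'[T_{r+1},\ldots,T_d]\to\Gr_{\mathfrak{m}'}(A')$ with the ideal generated by the classes of the $\gamma_i'$, together with the explicit generation of $\mathfrak{p}'$ established in Step 1. These structural facts are what replace the (false) density your sketch relies on. If you want to salvage your approach, you would need to replace the target $(\mathfrak{p}')^N$ with $\langle\gamma_1',\ldots,\gamma_t'\rangle + (\mathfrak{p}')^n\mathfrak{m}'$ and then justify that the approximation error falls into this ideal at every step — but justifying that is precisely the content of the Lipman-based part of the paper's argument, so it cannot be waved away.
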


\begin{proof}
We divide the proof in three steps:

\

{\bf {\em Step 1.}}  We claim that there are elements $y_1, \ldots, y_{r}, y_{r+1},\ldots, y_{d}\in A'$ such that
$$\mathfrak{m}'=\langle y_1,\ldots, y_d, \gamma_1',\ldots,\gamma_t'\rangle \  \  \text{ and } \ \ \mathfrak{p}'=\langle y_1,\ldots, y_r, \gamma_1',\ldots,\gamma_t'\rangle.$$

To prove the claim observe first that $\overline{A'}:=A'/\mathfrak{p}'$ is a regular local ring of dimension $(d-r)$. Therefore, we have that $\overline{\mathfrak{m}}':=\mathfrak{m}'/\mathfrak{p}'=\langle  \overline{y}_{r+1},\ldots, \overline{y}_d\rangle$ for some $ \overline{y}_{r+1},\ldots, \overline{y}_d\in \overline{A'}$. Thus
$$\mathfrak{m}'=\mathfrak{p}'+\langle y_{r+1},\ldots, y_d\rangle,$$
where  $y_{r+1},\ldots, y_d\in A'$ are liftings of $\overline{y}_{r+1},\ldots,\overline{y}_{d}$.
Notice that $\nub_{\mathfrak{m}'}(y_i)=1$ for $i=r+1,\ldots, d$ (because this is so at $\overline{A'}$). Since $\gamma_i'\in\mathfrak{p}'$ and  $\nub_{\mathfrak{m}'}(\gamma_i')>1$, we should be able to find $r$ elements, $y_1,\ldots, y_r$
in $\mathfrak{p}'$,  with $\nub_{\mathfrak{m}'}(y_i)=1$ and so  that,
$$\mathfrak{m}'=\langle y_1,\ldots, y_d\rangle+\langle \gamma_1',\ldots,\gamma_t'\rangle.$$
Now we have that,
$$ \langle y_1,\ldots, y_r\rangle+\langle \gamma_1',\ldots,\gamma_t'\rangle \subset\mathfrak{p}'.$$
To see that the last containment is an equality it suffices to prove that  ${\mathfrak q}:=\langle y_1,\ldots, y_r\rangle+\langle \gamma_1',\ldots,\gamma_t'\rangle$ is prime and that  it defines a $(d-r)$-dimensional closed subscheme at $\Spec(B')$.
But this is immediate since 
$$d-r=\dim(A'/\mathfrak{p}')\leq \dim(A'/\mathfrak{q})\leq d-r,$$
where that last inequality follows because $\mathfrak{m}'/\mathfrak{q}$ is generated by classes of
$y_{r+1},\ldots,y_d$.

\medskip

{\bf {\em Step 2.}} Consider the surjective morphism of graded $k(\mathfrak{m}')$-algebras:
 $$\xymatrix{  {\mathfrak D}':=\mathop{\oplus}\limits_{n\geq 0} {\mathfrak{p}'}^n/{\mathfrak{p}'}^n{\mathfrak m}' [T_{r+1},\ldots, T_{d}] \ar[rr]^{\psi'}   &  & 
 {\mathfrak C}':=\mathop{\oplus}\limits_{n\geq 0} {\mathfrak{m}'}^n/{\mathfrak{m}'}^{n+1} \ar[r]  & 0.
}$$
where the $T_i$ are variables mapping to the class of $y_i$ in ${\mathfrak m}'/ {\mathfrak{m}'}^{2}$, for $i=r+1,\ldots,d$.
We claim that
\begin{equation}
\label{nil_A_bis}
\text{Nil}({\mathfrak D'})=\langle  [{\gamma_1'}]_{{\mathfrak D}'},\ldots, [{\gamma_t'}]_{{\mathfrak D}'}\rangle,
\end{equation}
where $[{\gamma_i'}]_{{\mathfrak D}'}$ denotes the class of $\gamma_i'$ in
$\mathfrak{p}'/\mathfrak{p}'\mathfrak{m}'$ for $i=1,\ldots, t$, and that
\begin{equation}
\label{nil_B_bis}\text{Nil}({\mathfrak C}')=\langle [{\gamma_1'}]_{{\mathfrak C}'},\ldots, [{\gamma_t'}]_{{\mathfrak C}'}\rangle,
\end{equation}
where $[{\gamma_i'}]_{{\mathfrak C}'}$ denotes the class of $\gamma'_i$ in
$\mathfrak{m}'/{\mathfrak{m}'}^2$ for $i=1,\ldots, t$.
\medskip

To prove the claim, consider the ring of polynomials in $d$ variables over $k(\mathfrak{m}')$ localized at the origin, $T:=k(\mathfrak{m}')[x_1,\ldots,x_d]_{\langle x_1,\ldots,x_d\rangle}$,  and the morphism of $k(\mathfrak{m}')$-algebras,
\begin{equation}
\label{morfismo_T_B}
\xymatrix@R=1pt{  T=k(\mathfrak{m}')[x_1,\ldots,x_d]_{\langle x_1,\ldots,x_d\rangle}  \ar[rr] & & A' \\
	x_i \ar@{|->}[rr] &   & y_i, }
\end{equation}
(here  we are using the notation from step 1). Setting ${\mathfrak n}:=\langle x_1,\ldots, x_d\rangle\subset T$, the previous  morphism induces another morphism of $k(\mathfrak{m}')$-algebras between the graded rings, $\text{Gr}_{\mathfrak n}(T)$ and $\text{Gr}_{\mathfrak{m}'}(A')$,
\begin{equation}
\label{morfismo_D_C_prima}
\xymatrix@R=1pt{  \mathfrak{T}:=\text{Gr}_{\mathfrak n}(T) \ar[rr]^{\rho_{\xi'}} & & \text{Gr}_{\mathfrak{m}'}(A')={\mathfrak C}' \\
	[x_i]_1 \ar@{|->}[rr] &   & [y_i]_1,}
\end{equation}
where $[x_i]_1$ (resp. $[y_i]_1$) denotes the class of $x_i$ at ${\mathfrak n}/{\mathfrak n}^2$ (resp.  $\mathfrak{m}'/{\mathfrak{m}'}^2$) for $i=1,\ldots, d$. Via this morphism, $\text{Gr}_{\mathfrak{m}'}(A')$  is a finite extension of $\text{Gr}_{\mathfrak n}(T)$   (here we use the fact that the $\gamma_i'$ define nilpotents at $\text{Gr}_{\mathfrak{m}'}(A'))$.

Now  set  $\mathfrak{b}:=\langle x_1,\ldots, x_r\rangle\subset T$.
Then  we have the following commutative diagram of graded rings:
$$\xymatrix{  \mathfrak{D}'=\mathop{\oplus}\limits_{n\geq 0} {\mathfrak{p}'}^n/{\mathfrak{p}'}^n{\mathfrak m}' [T_{r+1},\ldots, T_{d}] \ar[rr]^{\qquad \psi'}   &  & 
\mathfrak{C}'=\mathop{\oplus}\limits_{n\geq 0} {\mathfrak{m}'}^n/ {\mathfrak{m}'}^{n+1} \ar[r]  & 0 \\
    \mathfrak{F}:=\mathop{\oplus}\limits_{n\geq 0} \mathfrak{b}^n/\mathfrak{b}^n\mathfrak{n} [T_{r+1},\ldots, T_{d}] \ar[rr]^{\qquad \phi} \ar[u]_{\rho_{\eta'}}  & &  
    \mathfrak{T}=\mathop{\oplus}\limits_{n\geq 0} {\mathfrak n}^n/ {\mathfrak n}^{n+1}    \ar[u]_{\rho_{\xi'}} \ar[r]  & 0.
}$$
By \cite[\S 5, Theorem 5]{Lipman}, $\ker(\psi')$ is nilpotent. Observe that $\phi$ is an isomorphism and  that ${\mathfrak D}'$ is a finite extension of ${\mathfrak F}$ (here we use the fact that each  $[\gamma_i']_{{\mathfrak C}'}$ is nilpotent at ${\mathfrak C}'$ and that $\ker(\psi')$ is nilpotent: hence  each $[\gamma_i']_{{\mathfrak D}'}$ is nilpotent at ${\mathfrak D}'$). Thus
$$
\langle  [{\gamma_1'}]_{{\mathfrak D}'},\ldots, [{\gamma_t'}]_{{\mathfrak D}'}\rangle\subset \text{Nil}({\mathfrak D'}) \ \ \text{ and } \ \
\langle [{\gamma_1'}]_{{\mathfrak C}'},\ldots, [{\gamma_t'}]_{{\mathfrak C}'}\rangle\subset \text{Nil}({\mathfrak C}').$$
To check that the containments are equalities it suffices to observe  that
$${\mathfrak D}'/\langle  [{\gamma_1'}]_{{\mathfrak D}'},\ldots, [{\gamma_t'}]_{{\mathfrak D}'}\rangle\simeq {\mathfrak F} \ \ \text{ and } \ \
{\mathfrak C}'/\langle  [{\gamma_1'}]_{{\mathfrak D}'},\ldots, [{\gamma_t'}]_{{\mathfrak D}'}\rangle\simeq {\mathfrak T}.$$
\medskip

{\bf {\em Step 3.}} Consider the commutative diagram,
$$\xymatrix{   {\mathfrak D}'=\mathop{\oplus}\limits_{n\geq 0} {\mathfrak{p}'}^n/{\mathfrak{p}'}^n \mathfrak{m}' [T_{r+1}',\ldots, T_{d}'] \ar[rr]^{\psi'}   &  &
 {\mathfrak C}'=\mathop{\oplus}\limits_{n\geq 0} {\mathfrak{m}'}^n/ {\mathfrak{m}'}^{n+1} \ar[r]  & 0 \\
 {\mathfrak D}:= \mathop{\oplus}\limits_{n\geq 0} {\mathfrak{p}}^n/{\mathfrak{p}}^n\mathfrak{m} [T_{r+1},\ldots, T_{d}] \ar[rr]^{\psi} \ar[u]   & & 
 {\mathfrak C}:=\mathop{\oplus}\limits_{n\geq 0} {\mathfrak{m}}^n/{\mathfrak{m}}^{n+1} \ar[r] \ar@{=}[u]  & 0,
}$$
paying attention to the  sequence for the $n$-th degree  part from  ${\mathfrak D}'$ and ${\mathfrak D}$:
$$ \xymatrix{
[\mathfrak{D}']_n= {\mathfrak{p}'}^n/{\mathfrak{p}'}^n \mathfrak{m}'
   \ar[r]^-{\epsilon_n'} \ar@/^2pc/[rr]^{[\psi']_n} &
    {\mathfrak{m}'}^n/{\mathfrak{p}'}^n{\mathfrak m}'   \ar[r]^-{\pi_n'} &
      {\mathfrak{m}'}^n/{\mathfrak{m}'}^{n+1} \ar[r] & 0 \\
[\mathfrak{D}]_n= {\mathfrak{p}}^n/{\mathfrak{p}}^n \mathfrak{m}
\ar[r]^-{\epsilon_n} \ar@/_2pc/[rr]_{[\psi]_n} \ar[u] &
 {\mathfrak m}^n/{\mathfrak p}^n{\mathfrak m} \ar[u]_{\iota_n} \ar[r]^-{\pi_n} &
   {\mathfrak m}^n/{\mathfrak m}^{n+1} \ar[r] \ar@{=}[u] & 0 }     $$

Now, for each $i\in \{1,\ldots, t\}$, choose $[\kappa_{i,1}]_1 \in {\mathfrak m}/{\mathfrak p}{\mathfrak m}$ so that $\pi_1([{\kappa_{i,1}}]_1)= [{\gamma_i'}]_{{\mathfrak C}'}\in {\mathfrak m}/ {\mathfrak m}^2={\mathfrak{m}'}/ {\mathfrak{m}'}^2$.
Then if  $[\gamma_i']_1$ denotes the class of $\gamma_i'$ in $ {\mathfrak{m}'}/{\mathfrak{p}'}{\mathfrak{m}'} $, we have that  $[\gamma_i']_1-\iota_1([{\kappa_{i,1}}]_1)\in \ker(\pi_1')=\epsilon'_1(\ker(\psi_1'))$.
Notice that from Step 2 and \cite[\S 5, Theorem 5]{Lipman}, it follows that $\ker(\pi')\subset \langle [\gamma_1']_1,\ldots, [\gamma_t']_1\rangle$.
\medskip

Thus, selecting  $\kappa_{i,1}\in A$ as some lifting of $[{\kappa_{i,1}}]_1$ we have that
$$\gamma_i'-\kappa_{i,1}\in \langle \gamma_1', \ldots, \gamma_t'\rangle +\alpha_{i,2}$$
for some  $\alpha_{i,2}\in {\mathfrak{p}'}{\mathfrak{m}'}$. Notice that it follows from here that $\kappa_{i,1}\in {\mathfrak p}$.
\medskip

Since
$\alpha_{i,2}\in {\mathfrak{p}'}{\mathfrak{m}'}\subset {\mathfrak{m}'}^2$ we now choose $[\kappa_{i,2}]_2\in
{\mathfrak{m}}^2/{\mathfrak{p}}^2{\mathfrak{m}}$
so that
$$\pi_2([\kappa_{i,2}]_2)=[\alpha_{i,2}]_{\mathfrak C}\in  {\mathfrak{m}}^2/ {\mathfrak{m}}^3={\mathfrak{m}'}^2/ {\mathfrak{m}'}^3.$$
Then $$[\alpha_{i,2}]_{2}-\iota_2([\kappa_{i,2}]_2)\in \ker(\pi'_2)=\epsilon'_2(\ker\psi'_2).$$
And, selecting some lifting $\kappa_{i,2}\in A$ of $[\kappa_{i,2}]_2$, we have that
$$\alpha_{i,2}-\kappa_{i,2}\in \langle \gamma_1',\ldots, \gamma_t' \rangle + \alpha_{i,3} $$
with $\alpha_{i,3}\in {\mathfrak{p}'}^2{\mathfrak{m}'}$.
From here it follows that $\kappa_{i,2}\in\mathfrak{p}$.
Iterating  this procedure, we find that
$$\gamma_i'-(\kappa_{i,1}+\kappa_{i,2}+\ldots+ \kappa_{i,n})\in \langle \gamma_1',\ldots, \gamma_t'\rangle +\alpha_{i,n}$$
with $\alpha_{i,n}\in  {\mathfrak{p}'}^n{\mathfrak{m}'}$, and $\kappa_{i,j}\in\mathfrak{p}$.
Taking $n\gg 0$, and setting
$$\gamma_i:=\kappa_{i,1}+\ldots+\kappa_{i,n}$$
we have that $\gamma_i\in {\mathfrak{p}}$ for $i=1,\ldots,t$,   that:
$$\min_{i=1,\ldots,t}\left\{ \nub_{\mathfrak{p}}(\gamma_i)\right\}=
\min_{i=1,\ldots,t}\left\{ \nub_{\mathfrak{p}'}(\gamma_i)\right\}\geq
\min_{i=1,\ldots,t}\left\{ \nub_{\mathfrak{p}'}(\gamma_i')\right\}>1, $$
$$\min_{i=1,\ldots,t}\left\{ \nub_{\mathfrak{p}A_{\mathfrak{p}}}(\gamma_i)\right\}=
\min_{i=1,\ldots,t}\left\{ \nub_{\mathfrak{p}'A'_{\mathfrak{p}'}}(\gamma_i)\right\}\geq
\min_{i=1,\ldots,t}\left\{ \nub_{\mathfrak{p}'A'_{\mathfrak{p}'}}(\gamma_i')\right\}>1, $$
and that
$$\min_{i=1,\ldots,t}\left\{ \nub_{\mathfrak{m}}(\gamma_i)\right\}=
\min_{i=1,\ldots,t}\left\{ \nub_{\mathfrak{m}'}(\gamma_i)\right\}\geq 
\min_{i=1,\ldots,t}\left\{ \nub_{\mathfrak{m}'}(\gamma_i')\right\}>1. $$
In particular, $\nub_{\mathfrak{p}A_{\mathfrak{p}}}(\gamma_i)>1$ and $\nub_{\mathfrak{m}}(\gamma_i)>1$
for $i=1,\ldots, t$. By construction,
$$[\gamma_i]_{\mathfrak C}= [\kappa_{i,1}]_{\mathfrak C}=[\gamma_i']_{{\mathfrak C}'},$$ from where it follows that
$\gamma_1,\ldots, \gamma_t\in\mathfrak{m}$ form both a $\lambda_{\mathfrak{m}}$-sequence and $\lambda_{\mathfrak{m}'}$-sequence.
We also have that
$${\mathfrak{m}'}=\langle y_1,\ldots, y_d\rangle +\langle \gamma_1,\ldots, \gamma_t\rangle, \  \  \text{ and that } \ \ \langle y_1,\ldots, y_r\rangle +\langle \gamma_1,\ldots, \gamma_t\rangle \subset {\mathfrak{p}'}. $$
To show that the last inclusion is an equality
we can argue as in Step 1, to check  that
$$A'/\left(\langle y_1,\ldots, y_r\rangle +\langle \gamma_1,\ldots, \gamma_t\rangle\right)$$ is a $(d-r)$-dimensional regular local ring.
Thus   $\{y_1,\ldots, y_r,\gamma_1,\ldots, \gamma_t\}$ form a minimal set of generators for
${\mathfrak{p}'}A'_{\mathfrak{p}'}\subset A'_{\mathfrak{p}'}$, 
hence $\gamma_1,\ldots, \gamma_t\in  {\mathfrak p}$ form a
 $\lambda_{\mathfrak{p}'A'_{\mathfrak{p}'}}$-sequence and therefore a
  $\lambda_{\mathfrak{p}A_{\mathfrak{p}}}$-sequence.
\end{proof}

\section{Rees algebras and their use in resolution}\label{Rees_Algebras} 

The stratum  defined by the  maximum value of the multiplicity function of a variety can be described using equations and weights (\cite{V}); and 
the same occurs with the Hilbert-Samuel function (\cite{Hir1}).
As we will see, Rees algebras happen to be a  a suitable tool to work in this setting,
opening the possibility to using different algebraic techniques. 
We refer to \cite{V3} and \cite{E_V} for further details.

\begin{definition}
Let $A$ be a Noetherian ring. A \textit{Rees algebra $\mathcal{G}$ over $A$} is a finitely generated graded $A$-algebra, $\mathcal{G}=\bigoplus _{l\in \mathbb{N}}I_{l}W^l\subset A[W]$, 
for some ideals $I_l\in A$, $l\in \mathbb{N}$ such that $I_0=A$ and $I_lI_j\subset I_{l+j}\mbox{,\; }$ for all $ l,j\in \mathbb{N}$. Here, $W$ is just a variable to keep track of the degree of the ideals $I_l$. Since $\mathcal{G}$ is finitely generated, there exist some $f_1,\ldots ,f_r\in A$ and positive integers (weights) $n_1,\ldots ,n_r\in \mathbb{N}$ such that
$\mathcal{G}=A[f_1W^{n_1},\ldots ,f_rW^{n_r}]$. The previous definition extends to Noetherian schemes in the obvious manner.
\end{definition}

In the following lines, we assume that $\mathcal{G}=\oplus _{l\geq 0}I_lW^l$  is a Rees algebra defined on a scheme $V$ that is smooth  over a perfect field $k$
(whenever the conditions on $V$ are relaxed it will be explicitly indicated).
If we assume $V$ to be affine, then we will write $V=\Spec(R)$. 

The  \textit{singular locus} of $\mathcal{G}$, Sing$(\mathcal{G})$,  is the closed set given by all the points $\zeta \in V$ such that $\nu _{\zeta }(I_l)\geq l$, $\forall l\in \mathbb{N}$,
where $\nu _{\zeta}(I)$ denotes the order of the ideal $I$ in the regular local ring $\mathcal{O}_{V,\zeta }$. If, locally, $\mathcal{G}=R[f_1W^{n_1},\ldots ,f_rW^{n_r}]$, then  $
\Sing(\mathcal{G})=\left\{ \zeta \in \mathrm{Spec}(R)\ |\, \nu _{\zeta }(f_i)\geq n_i,\;  i=1,\ldots ,r\right\} \subset V$ (see \cite[Proposition 1.4]{E_V}).  

\begin{example} \label{Ex:Multiplicity}
Suppose that
$X\subset\Spec(R)=V$ is  a hypersurface with $I(X)=(f)$.  Let $m>1$ be the maximum   multiplicity  at the points of $X$. Then the singular locus of $\mathcal{G}=R[fW^m]$   is the set of points of $X$ having maximum multiplicity $m$. This idea can be generalized as follows. 
Suppose $X$ is a $d$-dimensional variety over a  perfect field, and let $\mm$ be the maximum value of the multiplicity at points of $X$,   $\text{Mult}_X$.
Then as, explained in the Introduction, using the polynomials in (\ref{polinomios}) we
have that  
if $\G:={\mathcal O}_V[f_1W^{m_1}, \ldots, f_eW^{m_e}]$, then $\Sing(\G)= \Mm=\bigcap_{j=1}^r \mathrm{\underline{Max}\; mult}_{\{f_j=0\}}$. 	
The precise statement of this result will be given in Section \ref{seccion_demos}, since it will play a central role in the proof of Theorem \ref{maintheorem}. 
\end{example}

In the previous example, the link between  the closed set of points of {\em worst singularities} of   $X$ and the singular loci of  the corresponding  Rees algebras   is much stronger than just an equality of closed sets  $\Sing(\G)= \Mm$.  In particular,  
by defining  a suitable law of transformations of Rees algebras  after  a blow up, we can establish the same  link  between the  closed set of points of {\em worst singularities} of  the strict transform of $X$,  and the singular locus of the {\em transform} of the corresponding Rees algebra (at least if  the singularities of $X$ have not improved).  
This motivates the following definitions.

\begin{definition}\label{def:transf_law}
	Let $\mathcal{G}$ be a Rees algebra on a smooth scheme $V$. A \textit{$\mathcal{G}$-permissible blow up}, $V\stackrel{\pi }{\longleftarrow} V_1$, 
	is the blow up of $V$ at a smooth closed subset $Y\subset V$ contained in $\mathrm{Sing}(\mathcal{G})$ (a {\em permissible center for $\mathcal{G}$}). We use    $\mathcal{G}_1$ to denote  the (weighted) transform of $\mathcal{G}$ by $\pi $, which is defined as
	$\mathcal{G}_1:=\bigoplus _{l\in \mathbb{N}}I_{l,1}W^l\mbox{,}$
	where $
	I_{l,1}=I_l\mathcal{O}_{V_1}\cdot I(E)^{-l}
$, 
	for $l\in \mathbb{N}$ and $E$ the exceptional divisor of the blow up $V\stackrel{\pi }{\leftarrow} V_1$.
\end{definition}

\begin{definition}\label{def:res_RA}
	Let $\mathcal{G}$ be a Rees algebra over a smooth scheme $V$. A \textit{resolution of $\mathcal{G}$} is a finite sequence of blow ups
	\begin{equation}\label{diag:res_Rees_algebra}
	\xymatrix@R=0pt@C=30pt{
		V=V_0 & V_1 \ar[l]_>>>>>{\pi _1} & \ldots \ar[l]_{\pi _2} & V_L \ar[l]_{\pi _L}\\
		\mathcal{G}=\mathcal{G}_0 & \mathcal{G}_1 \ar[l] & \ldots \ar[l] & \mathcal{G}_L \ar[l]
	}\end{equation}
	at permissible centers $Y_i\subset \text{Sing} ({\mathcal G}_i)$, $i=0,\ldots, L-1$, such that $\mathrm{Sing}(\mathcal{G}_L)=\emptyset$, and such that the exceptional divisor of the composition $V_0\longleftarrow V_L$ is a union of hypersurfaces with normal crossings.
\end{definition}

\begin{remark}\label{ejemplo_hipersuperficie}
The Rees algebras of Example   \ref{Ex:Multiplicity}  are  defined so that   a resolution of the corresponding Rees algebra, $\G$ (\ref{diag:res_Rees_algebra}), induces a sequence of blow ups on $X$,  that ultimately leads to a simplification of the multiplicity of $X$ as in (\ref{SimplificaMult}).
Notice that for these sequences $\Sing(\G_i)=\Max\mult_{X_i}$, for $i=0,1,\ldots,L$.

Resolution of Rees algebras is known to exists when $V$  is a smooth scheme defined over a field of characteristic zero  (\cite{Hir}, \cite{Hir1}). In \cite{V1} and \cite{B-M} different algorithms of resolution of Rees algebras are presented (see also \cite{E_V97}, \cite{E_Hau}). More details will be given in the next section. 
 \end{remark}

\begin{parrafo} \label{RepreMult}
{\bf On the representation of the multiplicity by Rees algebras.}
In addition to permissible blow ups, there are other morphisms that play a role in resolution. These are involved in the arguments of  {\em Hironaka's trick}, and they are used to justify that the   {\em resolution invariants}   are {\em well defined} (\cite[\S 21]{Br_E_V}).
Some of these invariants will be treated in the following sections.
Apart from permissible blow ups, 
these morphisms are  multiplications by an affine line or restrictions to open subsets. A concatenation of any of these three kinds of morphisms is what we call a {\em local sequence}.   Therefore, for a given Rees algebra $\G$ defined on a smooth scheme $V$,  a 
	{\em ${\mathcal G}$-local sequence over $V$} is a   sequence of transformations over $V$, 
	\begin{equation}\label{Def:GLocSeq}
	\xymatrix @C=30pt{
		(V=V_0,\mathcal{G}=\mathcal{G}_0) & \ar[l]_-{\pi_{0}}  (V_1,\mathcal{G}_1) & \ar[l]_-{\pi_{1}}  \cdots & \ar[l]_-{\pi_{L-1}} (V_L,\mathcal{G}_L),
	}
	\end{equation}
where each  $\pi_i$ is either a permissible blow up  for ${\mathcal G}_i\subset {\mathcal O}_{V_i}[W]$ (and  ${\mathcal G}_{i+1}$ is the transform of ${\mathcal G}_{i}$ in the sense of  Definition \ref{def:transf_law}), or a multiplication by a line or a restriction to some open subset of $V_i$    (and then ${\mathcal G}_{i+1}$ is   the pull-back of ${\mathcal G}_i$ in $V_{i+1}$).  
If we assume that sequence (\ref{diag:res_Rees_algebra}) is a $\G$-local sequence over $V$ (instead of just a sequence of permissible blow ups), with $\G$ as in Example \ref{ejemplo_hipersuperficie}, then the equality 
	$\text{\underline{Max} mult}_{X_i} =\Sing(\G_i)$ still holds for each $i=1,\ldots,L-1$. Because of this fact we say that the {\em pair $(V, \G)$ represents the closed set $\Mm$}, since there is such a strong link between the two closed sets $\Sing (\G_i)$ and $\text{\underline{Max} mult}_{X_i}$ along the sequence.
The same  can be said about the representation of the Hilbert-Samuel function in \cite{Hir1}. See \cite{Br_V2} for precise definitions and results on local presentations.
\end{parrafo}

\begin{parrafo} \label{integral_diff} {\bf Uniqueness of the representations of the multiplicity.} The Rees algebra of Example \ref{Ex:Multiplicity} is not the unique representing $\Mm$.
To see this, we consider two operations: 
	
{\bf (i) Rees algebras and integral closure.} Two Rees algebras over a (not necessarily regular) Noetherian ring $R$  are \textit{integrally equivalent} if their integral closure in $\mathrm{Quot}(R)[W]$ coincide.
We use $\overline{\mathcal{G}}$ for the integral closure of $\mathcal{G}$, which can be shown to also be a Rees algebra over $R$ (\cite[\S 1.1]{Br_G-E_V}).  It is worth noticing that for a given  Rees algebra $\G=\oplus_lI_lW^l$ there is always some integer $N$ such that $\G$ is finite over $R[I_NW^N]$    (see \cite[Remark 1.3]{E_V}).   
	
{\bf (ii) Rees algebras and  saturation by differential operators.}   Let $\beta: V\to V'$ be a smooth morphism of smooth schemes defined over a perfect field $k$ with $\dim V > \dim V'$. Then, for any integer $s$, the sheaf of relative differential operators of order at most $s$,   $\Diff_{V/V'}^s$,
is locally free over $V$  (\cite[(4)~\S~16.11]{EGA_4}). We will say that a sheaf of ${\mathcal O}_V$-Rees algebras ${\mathcal G}=\oplus_l I_lW^l$ is a
{\em $\beta$-differential Rees algebra} if there is an affine covering $\{U_i\}$ of $V$, such that for  every homogeneous element $fW^N\in \G$ and every $\Delta\in \Diff_{V/V'}^s(U_i)$ with $s<N$,  we have that $\Delta(f)W^{N-s}\in {\mathcal G}$ (in particular, $I_{i+1}\subset I_i$  since  $ \Diff_{V/V'}^0\subset \Diff_{V/V'}^1$).
Given an arbitrary Rees algebra $\G$ over $V$  there is a natural way to construct a $\beta$-relative differential algebra with the property of being the smallest containing $\G$,  and we will denote it by $\Diff_{V/V'}(\G)$   (see \cite[Theorem 2.7]{V07}). Relative differential Rees algebras will play a role in the definition  of the so called {\em elimination algebras}, see Section \ref{ElimAlg}.  
	
We say that $\mathcal{G}$ is \textit{differentially closed} if it is closed by the action of the sheaf of (absolute) differential operators  $\Diff_{V/k}$. We use  $\mathrm{Diff}(\mathcal{G})$ to denote the smallest differential Rees algebra containing $\mathcal{G}$ (its \textit{differential closure}). See \cite[Theorem 3.4]{V07} for the existence and construction. 
	
It can be shown that
$\Sing(\G) =\Sing (\overline{\G})= \Sing (\mathrm{Diff}(\mathcal{G})),$
(see \cite[Proposition 4.4 (1), (3)]{V3}). In addition, it can be checked that if $\G$ represents $\Mm$ as in Example \ref{Ex:Multiplicity}, then
the integral closure of $\mathrm{Diff}(\mathcal{G})$  is the largest algebra in $V$ with this property. 
The previous discussion motivates the following definition: two Rees algebras on $V$, $\G$ and $\H$, are said to be {\em weakly equivalent}  if: (i) they share the same singular locus;  (ii) any $\G$-local sequence is an $\H$-local sequence, and vice versa, and they share the same singular locus after any $\G$-(respectively $\H$-)local sequence. It can be proven that two Rees algebras $\G$ and $\H$ are weakly equivalent if and only if $\overline{\mathrm{Diff}(\mathcal{G})}=\overline{\mathrm{Diff}(\mathcal{H})}$ (see \cite{Br_G-E_V} and \cite{HirThree}),
and, in particular, a resolution of one of them induces a resolution of the other and vice versa. 
\end{parrafo} 

\section{Algorithmic resolution and resolution invariants} \label{AlgoResol}

In characteristic zero, an algorithmic  resolution of Rees algebras  requires the definition of  {\em resolution invariants}.
These  are used to assign a string of numbers to each point $\zeta \in \Mm= \Sing (\G)$. In this way  one  can  define  an upper semi-continuous function $g:\Sing (\G) \to (\Gamma, \geq)$, where $\Gamma$ is some well ordered set, and whose maximum value determines the first center to blow up. This function is constructed so that its maximum value drops after each blow up.  As a consequence, a resolution of $\G$ is achieved after a finite number of steps.

The most important resolution invariant   is {\em Hironaka's order function at a point  $\zeta \in \mathrm{Sing}(\mathcal{G})$}  which we also refer  as   the \textit{order of the Rees algebra $\mathcal{G}$ at $\zeta$}, and it is defined as 
$\mathrm{ord}_{\zeta}(\mathcal{G}):=\inf _{l\geq 0}
\left\{\nu_{\zeta}(I_l)/l\right\}$. 
If $\mathcal{G}=R[f_1W^{m_1},\ldots ,f_rW^{m_r}]$ and $\zeta\in \mathrm{Sing}(\mathcal{G})$ then by   \cite[Proposition 6.4.1]{E_V}),  
$\mathrm{ord}_{\zeta}(\mathcal{G})=\min _{i=1,\ldots,r}\left\{\nu_{\zeta}(f_i)/{m_i}\right\}$. 
Any other invariant involved in the algorithmic resolution of a Rees algebra $\G$ derives from Hironaka's order function. Finally, it can be proved that for any point $\zeta\in\Sing(\mathcal{G})$ we have
$\ord_{\zeta}(\mathcal{G})=\ord_{\zeta}(\overline{\mathcal{G}})=\ord_{\zeta}(\Diff(\mathcal{G}))$  
(see   \cite[Remark 3.5, Proposition 6.4 (2)]{E_V}).

It can be shown that two Rees algebras that are weakly equivalent share the same resolution invariants and therefore a resolution of one induces a resolution of the other. In particular, this is the case for     $\G$, $\overline{\G}$ and $\Diff (\G)$   (\cite[Proposition 3.4, Theorem 4.1,  Theorem 7.18]{E_V}, \cite{Villamayor2005_2}).

\begin{parrafo} \label{induccion}
{\bf The role of Hironaka's order in resolution and the use of induction in the dimension.}
Suppose $\G$ is defined on a smooth scheme $V$ of dimension $n$, and assume  that $\mathrm{ord}_{\xi }(\mathcal{G})=1$ for some closed point $\xi \in \Sing (\G)$. Then, there are two possibilities: 
	\begin{itemize}
		\item[(i)] Either the point $\xi$ is contained in some codimension-one  component  $Y$  of $\Sing (\G)$; in such case   it can be proven  that $Y$ is smooth, and the blow up at $Y$ induces a resolution of $\G$, locally at $\xi$ (\cite[Lemma 13.2]{Br_V});
		\item[(ii)] Otherwise,  it can be shown that, locally, in an \'etale neighborhood of $\xi$,  there is a smooth projection from $V$ to some smooth $(n-1)$-dimensional scheme $Z$, together with a new Rees algebra ${\mathcal R}$ on $Z$ such that a resolution of ${\mathcal R}$ induces a resolution of $\G$ and vice versa, at least if the characteristic is zero. This is what we call an {\em elimination algebra of $\G$}  and details on its construction will be given in the next section. 
	\end{itemize}

Case (ii) indicates that resolution of Rees algebras can be addressed by induction on the dimension when the characteristic is zero. 

It is worthwhile mentioning that if the maximum order at the points of $\Sing(\G)$ is larger than one,  then one can attach a new Rees algebra $\H$  to the closed points of maximum order,  $\underline{\text{Max}} \ord (\G)$, so that $\Sing (\H)=\underline{\text{Max}} \ord (\G)$, and so that the equality is preserved by $\H$-local sequences. Thus $\H$ is unique up to weak equivalence.  This new Rees algebra $\H$ is constructed so that  its maximum order equal to one,  and the arguments in (i) and (ii) can be applied to it.  
\end{parrafo}

\section{Elimination algebras}\label{ElimAlg}

Along this and the following sections, $V^{(n)}$ denotes an $n$-dimensional smooth scheme over a perfect field $k$, and $\G^{(n)}=\oplus_l I_l W^l$ a Rees algebra over $V^{(n)}$. Our purpose is to   search for smooth morphisms  from $V^{(n)}$ to some $(n-e)$-dimensional smooth scheme, for some $e\geq 1$, so that $\Sing (\Gn)$ is homeomorphic to its image via $\beta$, and so that this condition is preserved by permissible blow ups in some sense that will be specified below. One way to find such smooth morphisms  is by considering morphisms from $V^{(n)}$ which are somehow {\em transversal}  to  $\Gn$. Transversality is expressed in terms of the {\em tangent cone of $\Gn$} at a given point of its singular locus (see Definition \ref{def83} below).   

\medskip

Let    $\xi \in \Sing (\G^{(n)})$ be a closed point, and let $\text{Gr}_{{\mathfrak m}_{\xi}}({\mathcal O}_{V^{(n)},\xi}) \cong k'[Y_1,\ldots, Y_n]$ be  the graded ring of $\mathcal{O}_{V^{(n)},\xi}$, where $k'$ is  the residue field at $\xi$.  Observe  that $\text{Spec} (\text{Gr}_{{\mathfrak m}_{\xi}}({\mathcal O}_{V^{(n)},\xi}))={\mathbb T}_{{V^{(n)}},\xi}$, the tangent space of $V^{(n)}$ at $\xi$. 

\begin{definition}\label{tangent_cone}  {\rm Suppose $\xi\in \Sing(\Gn)$ is a closed point with $\ord_{\xi}(\Gn)=1$. The {\em initial ideal} or  {\em tangent ideal of $\G^{(n)}$ at $\xi$}, $\text{In}_{\xi}\G^{(n)}$, is   the homogeneous ideal of $\text{Gr}_{{\mathfrak m}_{\xi}}({\mathcal O}_{V^{(n)},\xi})$ generated by
		$
		\text{In}_{\xi}(I_l) :=  (I_l+{\mathfrak m}^{l+1}_{\xi})/{\mathfrak m}^{l+1}_{\xi}, 
		$
		for all $l \geq 1$. The {\em tangent cone of    $\Gn$ at $\xi$}, ${\mathcal C}_{\Gn,\xi}$,  is the closed subset of ${\mathbb T}_{{V^{(n)}},\xi}$ defined by the initial ideal   of    $\Gn$ at $\xi$. }
\end{definition}

\begin{definition}\label{tau}  \cite[4.2]{V07} 
The {\em $\tau$-invariant of ${\Gn}$ at the closed point ${\xi}$} is the minimum number of variables  in $\text{Gr}_{{\mathfrak m}_{\xi}}({\mathcal O}_{V^{(n)},\xi})$   needed to generate 
	$\operatorname{In}_{{\xi}}({\Gn})$. This in turn is the codimension of the largest linear
	subspace  ${\mathcal L}_{{\Gn},{\xi}}\subset {\mathcal
		C}_{{\Gn},{\xi}}$ such that $u+v\in {\mathcal C}_{{\Gn},{\xi}}$ for all $u\in {\mathcal C}_{{\Gn},{\xi}}$ and $v\in
	{\mathcal L}_{{\Gn},{\xi}}$. The $\tau$-invariant of ${\Gn}$ at ${\xi}$ is denoted by  $\tau_{{\Gn},{\xi}}$. 
\end{definition} 

\begin{definition}\label{transversalidad}
Let $\xi\in\Sing({\Gn})$ be a  closed point with $\tau_{{\Gn},{\xi}}\geq e\geq 1$. A local smooth projection to a  $(n-e)$-dimensional (smooth) scheme  $V^{(n-e)}$,   $\beta : V^{(n)}   \to  V^{(n-e)}$, is {\em  $\Gn$-transversal at $\xi$} if
$\ker (d_{\xi}\beta)\cap   {\mathcal C}_{{\Gn},\xi}=\{0\}\subset {\mathbb T}_{{V^{(n)}},\xi}$,  
where $d_{\xi}\beta$ denotes the differential of $\beta$ at the point $\xi$. 
\end{definition}

\begin{definition}\label{def83}
Let $\xi\in\mbox{Sing }{\Gn}$ be a  closed point with $\tau_{{\Gn},{\xi}}\geq e\geq 1$. A local smooth projection to an  $(n-e)$-dimensional (smooth) scheme  $V^{(n-e)}$,   $\beta : V^{(n)}   \to  V^{(n-e)}$, 
is {\em  $\Gn$-admissible locally at $\xi$} if the following conditions hold:
\begin{enumerate}
\item The  point $\xi$ is not contained in any codimension-$e$-component  of    $\mbox{Sing }{\Gn}$;
\item The Rees algebra ${\Gn}$ is a $\beta$-relative differential algebra (see \S  \ref{integral_diff} (ii));  
\item The morphism $\beta$  is $\Gn$-transversal at $\xi$. 
\end{enumerate}
\end{definition}
Regarding condition (1), if $\xi$ is contained in a codimension-$e$-component  of  $\mbox{Sing }{\Gn}$ then this component is a permissible center, see \S\ref{induccion}.
Under the previous conditions, it is always possible to construct a $\Gn$-admissible morphism in an (\'etale) neighborhood of $\xi$ (see \cite{V07} and also \cite[\S 8.3]{Br_V}).

\begin{definition} \label{eliminacion}
\cite{V07,Br_V}
Let $\beta  : V^{(n)}    \to   V^{(n-e)}$ be a $\Gn$-admissible projection in an (\'etale)  neighborhood of the closed point $\xi$. Then the ${\Ovne}$-Rees algebra 
 $\Gne:=\Gn\cap {\Ovne}[W]$, 
and any other with the same integral closure in ${\Ovne}[W]$,  is an {\em elimination algebra of $\Gn$ in $V^{(n-e)}$} (see \cite[Theorem 4.11]{V07}). 
\end{definition}

\begin{example} \label{Emininacion_Hiper}
	Let $S$ be a  smooth  $d$-dimensional $k$-algebra of finite type, with $d>0$.  Let $V^{(d+1)}=\mathrm{Spec}(S[x])$. Then the natural inclusion $S\stackrel{\beta ^*}{\longrightarrow } S[x]$, induces a   smooth projection
$V^{(d+1)}\stackrel{\beta}{\longrightarrow }V^{(d)}=\mathrm{Spec}(S)$.  Let $f(x)\in S[x]$ be a polynomial of degree $m>1$, defining a hypersurface $X$  in $V^{(n)}$. Set $X=\mathrm{Spec}(S[x])/\langle f(x) \rangle$. Suppose that  $\xi \in X$ is  a point  of multiplicity $m$.  
Then,
$$\mathcal{G}^{(d+1)}=\mathrm{Diff}(S[x][fW^{m}])\subset S[x][W]$$ represents the multiplicity function on $X$ locally at $\xi$. If the characteristic is zero and if  we assume that  $f$ has the form of Tschirnhausen (there is always a change of coordinates that leads us to this form):
\begin{equation}\label{ec:f_Tsch}
f(x)=x^m+a_{2}x^{m-2}+\ldots + a_{m-i}x^i+\ldots +a_m\in S[x]\mbox{,}
\end{equation}
where $a_i\in S$ for $i=0,\ldots ,m-2$,  then it can be shown that, up to integral closure,  
$$\mathcal{G}^{(d)}=\mathrm{Diff}(S[x][a_{2}W^2,\ldots ,a_{m-i}W^{m-i},\ldots ,a_mW^m])\mbox{,}$$
is an elimination algebra of $\G^{(d+1)}$. If the characteristic is positive, the elimination algebra is also defined. In either case,  it can be shown that it is generated by a finite set of some symmetric (weighted homogeneous) functions evaluated   on the  coefficients of $f(x)$ (cf.  \cite{V00}, \cite[\S1, Definition 4.10]{V07}).  It is worthwhile noticing that  the elimination algebra $\Gd$ is invariant under changes of the form $x'=x+\alpha$ with $\alpha\in S$ \cite[\S 1.5]{V07}. Finally, we will see that, to understand elimination algebras in a more general setting, it suffices to treat the hypersurface case, at least for the purposes of this paper (see \S \ref{p_presentations_suitable}, specially (\ref{amalgama}) and (\ref{amalgama_eliminacion_1})). 
\end{example}

\begin{parrafo}\label{EliminationProperties} {\bf Properties of elimination algebras.} {\rm 
		Let $\beta : V^{(n)}    \to   V^{(n-e)}$ be a $\Gn$-admissible projection in an (\'etale)  neighborhood of a closed $\xi\in \Sing (\Gn)$, and let $\Gne\subset \Ovne[W]$ be an elimination algebra.  Then $\Sing({\mathcal G}^{(n)})$ maps injectively into $\Sing({\mathcal G}^{(n-e)})$, in particular 
		$\beta  (\mbox{Sing}({\mathcal G}^{(n)}))\subset \mbox{Sing}({\mathcal G}^{(n-e)})$  with  equality if the characteristic is zero, or if ${\mathcal G}^{(n)}$ is a differential Rees algebra  (see \cite[\S 8.4]{Br_V}).   Moreover, If ${\mathcal G}^{(n)}$ is a differential Rees algebra, then so is    ${\mathcal G}^{(n-e)}$ (see \cite[Corollary 4.14]{V07}). And if ${\mathcal G}^{(n)}\subset {\mathcal G}'^{(n)}$ is a finite extension, then ${\mathcal G}^{(n-e)}\subset {\mathcal G}'^{(n-e)}$ is a finite extension (see \cite[Theorem 4.11]{V07}). Finally, for a point $\zeta\in\Sing(\Gn)$, 
		the order of $\Gne$ at  $\beta (\zeta)$ does not depend on the choice of the  projection $\beta$  (see \cite[Theorem 5.5]{V07} and \cite[Theorem 10.1]{Br_V}).
	}
\end{parrafo}

\begin{parrafo} \label{DefOrdHironaka}
\textbf{Hironaka's order of an algebraic variety.} 
 Let $X$ be an equidimensional variety of dimension $d$ over a perfect field $k$ and let $\zeta\in X$ be a point of maximum  multiplicity $m>1$. We can assume that $X=\Spec(B)$ is affine. 
Let $\xi\in\overline{\{\zeta\}}$ be a closed of multiplicity $m$. 
 Then, as indicated in Example \ref{Ex:Multiplicity}, there is an 
\'etale neighborhood of $\Spec(B)$, $X'=\Spec(B')$,   an embedding in some  smooth $(d+e)$-dimensional scheme $V^{(d+e)}$,
and a differential Rees algebra $\G^{(d+e)}$ representing the top multiplicity locus of $X'$.  In \S\ref{p_presentations_suitable} we will see that under these assumptions,  $\tau_{\G,\xi'} \geq e$, and   there is a $\G^{(d+e)}$-admissible projection to some $d$-dimensional smooth  scheme where an elimination algebra $\Gd$ can be defined. Let $\zeta'\in X'$ be a point mapping to $\zeta$.  Then by \S\ref{EliminationProperties}, 
$$\ord^{(d)}_X(\zeta):=\ord^{(d)}_{\G^{(d+e)}}({\zeta'}).$$
does not depend on the selection of the \'etale neighborhood, nor on the choice of  Rees algebra representing the top multiplicity locus,  nor on   the  admissible projection. We refer to  this rational number  as  {\em Hironaka's order function of $X$ at $\zeta$ in dimension $d$}. 
\end{parrafo}

\section{The function $\Hord$} \label{presentaciones}

When facing an algorithmic resolution of the  variety $X$ in characteristic zero, the number $\ord^{(d)}_{X}({\zeta})$ is  the most important invariant at the  point $\zeta$  (after  the multiplicity), and there is a strong link between the resolutions of $\G^{(d+e)}$ and $\G^{(d)}$:
in particular, a resolution of the first induces a resolution of the second and vice versa.
When the characteristic is positive, this link between ${\mathcal G}^{(d+e)}$ and ${\mathcal G}^{(d)}$ is weaker, as illustrated in the following example.

\begin{example} \label{EjemCuspide}
Let $X=\Spec\left({\mathbb F}_{2}[z,y]/\langle z^2-y^3\rangle \right)$.
Set  $V^{(2)}=  \Spec\left({\mathbb F}_{2}[z,y]\right)$,   define the  ${\mathbb F}_{2}[z,y]$-Rees algebra
$\G^{(2)}:=\Diff\left({\mathbb F}_{2}[z,y][(z^2-y^3)W^2] \right)= {\mathbb F}_{2}[z,y][y^2W, (z^2-y^3)W^2]$,
and let $\xi$ be the singular point of $X$. 
The inclusion ${\mathbb F}_{2}[y] \subset {\mathbb F}_{2}[z,y]$ induces a $\G^{(2)}$-transversal projection $\beta:V^{(2)}\to V^{(1)}=\Spec({\mathbb F}_{2}[y])$.
The elimination algebra is 
$\G^{(1)}={\mathbb F}_{2}[y][y^2W]$, and $\beta(\Sing(\G^{(2)})=\Sing(\G^{(1)})$.
However, after the blow up at $\xi$, $\Sing(\G_1^{(2)}=\emptyset$ but 
$\Sing(\G^{(1)}_1)\neq\emptyset$.
\end{example}

Thus, when the characteristic is positive, what we consider the first  relevant invariant in characteristic zero,    $\ord^{(d)}_{\G^{(d+e)}}(\zeta)=\ord_{\beta(\zeta)}\Gd$, needs to be refined. 
This leads us to talk about the function $\Hord^{(d)}_{X}$, introduced and studied in \cite{BVComp} and \cite{BVIndiana}. 
We will start with the definition for hypersurfaces, and then we will see that the general case reduces to that of hypersurfaces.

\begin{parrafo}  \label{Pres_Hiper}  
\textbf{The hypersurface setting.}
Let $V^{(d+1)}$ be $(d+1)$-dimensional   smooth scheme over a perfect field $k$, 
let $X\subset V^{(d+1)}$ be a hypersurface of dimension $d$,
and let $\xi\in X$ be a closed point of maximum multiplicity $m>1$.
Choose a local generator $f\in\mathcal{O}_{V^{(d+1)},\xi}$ defining $X$ in an open affine  neighborhood   $U\subset V^{(d+1)}$ of $\xi$, which we denote by $V^{(d+1)}$ for simplicity. 
Define the Rees algebra $\mathcal{G}^{(d+1)}=\Diff(\mathcal{O}_{V^{(d+1)}}[fW^m])$,
see Example \ref{Ex:Multiplicity}.
After applying Weierstrass Preparation Theorem, we can assume that in an \'etale neighborhood of
$\xi\in V^{(d+1)}$,
which we again denote by $V^{(d+1)}$, we have the following situation.
There is an affine smooth scheme of dimension $d$, $V^{(d)}=\Spec(S)$, such that
$V^{(d+1)}=\Spec(S[z])$, where is $z$ is a variable, and 
$X$ is defined by
\begin{equation} \label{Weierstrass}
f=z^m+a_1z^{m-1}+\cdots+a_{m-1}z+a_m, \qquad a_i\in S, \  i=1,2,\ldots,m.
\end{equation}
It can be checked that the morphism $\beta:V^{(d+1)}\to V^{(d)}$ is $\mathcal{G}^{(d+1)}$-transversal at $\xi$ (Definition \ref{transversalidad}).
We say that $f$ is written in \emph{Weierstrass form with respect to the projection $\beta$}.
\end{parrafo}

\begin{remark} \label{SimpPresHiper}
\cite[\S 2.15]{BVIndiana}
With the same notation as in \S \ref{Pres_Hiper},
it can be proved  that, in a neighborhood of $\xi$, $\G^{(d+1)}$ has the same integral closure as
\begin{equation} \label{EqPresHiper}
S[z][fW^m, \Delta_z^{\alpha}(f)W^{m-\alpha}]_{1\leq \alpha \leq m-1}\odot \Gd,
\end{equation}
where $\Gd$ is an elimination algebra of $\mathcal{G}^{(d+1)}$, the $\Delta^{i}_{z} $ are the Taylor differential operators, and we use "$\odot$" to denote the smallest Rees algebra containing the
two that are involved in the expression.
Recall that $\{\Delta^0_z,\ldots, \Delta^r_z\}$ is a basis of the free module of $S$-differential operators of $S[z]$ of order $r$ (see \cite[Proposition 2.12]{BVComp}; see also Example \ref{Emininacion_Hiper}).
We will say that (\ref{EqPresHiper}) is a \emph{simplified presentation} of $\mathcal{G}^{(d+1)}$ at $\xi$.
The presentation depends on the choice of the smooth morphism $\beta$, the variable $z$ and the monic generator $fW^m$. We will use $\mathcal{P}(\beta,z,fW^m)$ to denote this simplified presentation.
\end{remark}

\begin{definition} \cite[\S 5.5]{BVIndiana} \label{DefSlope1}
Let $\mathcal{P}(\beta,z,fW^m)$ be a  simplified presentation of $\mathcal{G}^{(d+1)}$ as in Remark \ref{SimpPresHiper}, and $f$ as in (\ref{Weierstrass}).
The \emph{slope} of $\mathcal{P}(\beta,z,fW^m)$ at a point $\zeta\in\Sing(\mathcal{G}^{(d+1)})\subset V^{(d+1)}$ is defined as: 
\begin{equation} \label{EqDefSlope1}
Sl(\mathcal{P})(\zeta):=\min\left\{
\nu_{\beta(\zeta)}(a_1),\ldots,\frac{\nu_{\beta(\zeta)}(a_j)}{j},\ldots,\frac{\nu_{\beta(\zeta)}(a_m)}{m}, \ \ord_{\beta(\zeta)}(\mathcal{G}^{(d)})
\right\}.
\end{equation}
\end{definition}

\begin{remark}
The value $Sl(\mathcal{P})(\zeta)$ depends on the chosen data, that is, on the morphism $\beta$, the generator $fW^m$ and the global section $z$.
Translations of the form $z+s$, with $s\in \mathcal{O}_{V^{(d)}}$, give new simplified presentations
$\mathcal{P}(\beta,z+s,fW^m)$ which
may lead to different values of the slope.
The value	
\begin{equation} \label{EqBetaOrd}
\sup_{z'}\left\{Sl(\mathcal{P}(\beta,z',fW^m))(\zeta) \right\}
\end{equation}
does not depend on the choice of the transversal morphism $\beta$,
nor on the choice of the order-one-element $fW^m\in\mathcal{G}^{(d+1)}$ ($fW^m$ can be replaced  by any other order-one-element $gW^{m_1}\in\mathcal{G}^{(d+1)}$ non necessarily defining the hypersurface $X$).
Moreover, the supremum in (\ref{EqBetaOrd}) is a maximum for a suitable selection of $z'$.
See \cite[\S 5.2 and Theorem 7.2]{BVComp}.

\end{remark}

\begin{definition}\label{DefHordHyper}
\cite[\S 5, Definition 5.12]{BVIndiana}
Let $\zeta\in X$ be a point of a hypersurface $X$ of multiplicity $m>1$,
and consider an \'etale neighborhood
$X'\to X$ of a closed point of multiplicity $m$, $\xi\in\overline{\{\zeta\}}$,
such that the setting of \S\ref{Pres_Hiper} holds,
and let $\zeta'\in X'$ be a point mapping to $\zeta$.
Then we define
\begin{equation*}
\Hord^{(d)}_X(\zeta):=\Hord^{(d)}_{X'}(\zeta'):=\max_{z'}\left\{Sl(\mathcal{P}(\beta,z',gW^{N}))(\zeta') \right\}.
\end{equation*} 
\end{definition}

\begin{remark}
When the characteristic of the base field $k$ is zero, then it can be shown that  for all $\zeta\in \Sing (\G^{(d+1)})$, 
$\Hord^{(d)}_X(\zeta)=\ord_{\beta(\zeta)}(\mathcal{G}^{(d)})$
(see \cite[\S 2.13]{BVIndiana} and Example \ref{Emininacion_Hiper}).
Thus, this invariant provides new information only when the characteristic of $k$ is positive.
For example, if $X$ is as in Example \ref{EjemCuspide}, it can be checked that  
$\Hord^{(d)}_X(\xi)=3/2< \ord_{\beta(\xi)}(\mathcal{G}^{(1)})=2$.   
\end{remark}

\begin{parrafo} \textbf{$\mathbf{p}$-Presentations.} \label{Def_p_PresHiper} 
 Suppose $\text{char}(k)=p>0$. 
Continuing with the notation introduced in \S \ref{Pres_Hiper}, since $\mathcal{G}^{(d+1)}$ is a differential algebra, in order to compute the value
$\beta\text{-}\ord(\xi)$, 
it is always possible to find an order-one-element of the form $hW^{p^{\ell}}\in\mathcal{G}^{(d+1)}$, where
$h$ is a monic polynomial of degree $p^{\ell}$ for some $\ell\in\mathbb{Z}_{\geq 1}$, 
and in Weierstrass form with respect to $\beta$.
This can be done as follows. Assume that $g(z)W^N\in\mathcal{G}^{(d+1)}$ and that
\begin{equation*}
g(z)=z^{N}+b_1z^{N-1}+\cdots+b_{N-1}z + b_{N}, \quad b_i\in S, \ i=1,\ldots,N.
\end{equation*}
Write  $N=N'p^{\ell}$ with  $p$ not dividing   $N'$. Set $r=(N'-1)p^{\ell}$ and  $h(z)=\dfrac{1}{N'}\Delta_{z}^r(g(z))$. Note that
\begin{equation}
h(z)=z^{p^{\ell}}+\tilde{b}_1z^{p^{\ell}-1}+\cdots + \tilde{b}_{p^{\ell}}
\end{equation}
where, for $j=1,\ldots,p^{\ell}-1$, $\tilde{b}_j=\dfrac{c_j}{N'}b_j$ for some integer $c_j$, 
and $\tilde{b}_{p^{\ell}}=\dfrac{1}{N'}b_{p^{\ell}}$.
Then  $h(z)W^{p^{\ell}}\in\mathcal{G}^{(d+1)}$ and
$\mathcal{P}(\beta,z,h(z)W^{p^{\ell}})$ is a special type of simplified presentation of $\mathcal{G}^{(d+1)}$.
Presentations of the form $\mathcal{P}(\beta,z,hW^{p^{\ell}})$ will be called \emph{$p$-presentations}
(\cite[Definition 2.14]{BVComp}).
Compared to general simplified presentations, $p$-presentations have the advantage that the computation of the slope (\ref{EqDefSlope1}) becomes simpler.
\end{parrafo}

\begin{theorem} \label{ThBenVilla}
\cite[Theorem 4.4]{BVComp}
Let $\mathcal{P}(\beta,z,hW^{p^{\ell}})$ be a $p$-presentation of $\mathcal{G}^{(d+1)}$, where
\begin{equation}\label{EqThBenVilla}
h(z)=z^{p^{\ell}}+\tilde{b}_1z^{{p^{\ell}}-1}+\cdots+\tilde{b}_{{p^{\ell}}-1}z+\tilde{b}_{p^{\ell}} \in\mathcal{O}_{V^{(d)}}[z].
\end{equation}
Let $\zeta\in\Sing(\mathcal{G}^{(d+1)})$.
Then
\begin{equation*}
Sl(\mathcal{P})(\zeta)=\min\left\{
\frac{\nu_{\beta(\zeta)}(\tilde{b}_{p^{\ell}})}{p^{\ell}}, \ \ord_{\beta(\zeta)}(\mathcal{G}^{(d)})
\right\}.
\end{equation*}
\end{theorem}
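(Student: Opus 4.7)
Plan. Writing the definition of $Sl(\mathcal{P})(\zeta)$ in Definition~\ref{DefSlope1} with $a_j=\tilde b_j$, the claimed formula differs only in that the entries $\nu_{\beta(\zeta)}(\tilde b_j)/j$ for $1\le j\le p^\ell-1$ are absent. Hence the theorem is equivalent to the bound
\begin{equation*}
\frac{\nu_{\beta(\zeta)}(\tilde b_j)}{j}\;\ge\;\ord_{\beta(\zeta)}\bigl(\mathcal{G}^{(d)}\bigr),\qquad j=1,\ldots,p^\ell-1.
\end{equation*}
I would deduce these inequalities from the stronger statement that $\tilde b_j W^{j}\in\overline{\mathcal{G}^{(d)}}$ for every $1\le j\le p^\ell-1$, which, via Corollary~\ref{nub_integral} and the definition of $\ord$ as an infimum of $\nu(J_l)/l$, delivers the displayed bound at once.

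The key input is a characteristic-$p$ identity for the Hasse--Schmidt operators $\Delta_z^r$ of \S\ref{generadores_relativos}. Since $\mathcal{G}^{(d+1)}=\Diff(\mathcal{O}_{V^{(d+1)}}[hW^{p^\ell}])$ is differentially closed and $hW^{p^\ell}\in\mathcal{G}^{(d+1)}$, the element $\Delta_z^{p^\ell-j}(h)\,W^j$ lies in $\mathcal{G}^{(d+1)}$ for each $j\ge 1$. Because $p^\ell$ is a pure $p$-power, Lucas's theorem gives $\binom{p^\ell}{i}\equiv 0\pmod p$ for $0<i<p^\ell$; hence the $z^{p^\ell}$-term of $h$ contributes nothing to the derivative, and a direct computation yields the key relation
\begin{equation*}
\Delta_z^{p^\ell-j}\bigl(h(z)\bigr)\;=\;\tilde b_j\;+\;\sum_{i=1}^{j-1}\binom{p^\ell-j+i}{i}\,\tilde b_{j-i}\,z^{i}.
\end{equation*}
In other words, the weight-$j$ element $\Delta_z^{p^\ell-j}(h)W^j\in\mathcal{G}^{(d+1)}$ equals $\tilde b_j W^j$ up to a polynomial in $z$ whose coefficients belong to the ideal $(\tilde b_1,\ldots,\tilde b_{j-1})\subset S$.

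I would now induct on $j$. For $j=1$ the identity reads $\Delta_z^{p^\ell-1}(h)=\tilde b_1\in S$, whence $\tilde b_1 W\in\mathcal{G}^{(d+1)}\cap\mathcal{O}_{V^{(d)}}[W]\subset\overline{\mathcal{G}^{(d)}}$ by Definition~\ref{eliminacion}. For the inductive step, assume the claim for $1,\ldots,j-1$; to promote the displayed relation to an integral-dependence relation for $\tilde b_j W^j$ over $\overline{\mathcal{G}^{(d)}}$, I would exploit the finite monic extension $\mathcal{O}_{V^{(d)}}\hookrightarrow \mathcal{O}_{V^{(d+1)}}/(h)$ supplied by the $p$-presentation, the finiteness statement for elimination algebras recalled in \S\ref{EliminationProperties}(3), and the multiplicativity of $\bar\nu$ under such extensions given by Proposition~\ref{ExtFinNuBar}.

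The delicate point, and the main obstacle, is exactly this inductive step: one cannot simply rearrange the displayed identity to solve for $\tilde b_j W^j$, since the tail $\sum_{i\ge 1}\binom{p^\ell-j+i}{i}\tilde b_{j-i}z^{i}$ contains factors $z^i$ that are not elements of the elimination algebra. One must instead assemble the relations produced by $\Delta_z^{p^\ell-1}(h),\Delta_z^{p^\ell-2}(h),\ldots,\Delta_z^{p^\ell-j}(h)$ simultaneously to manufacture a monic polynomial satisfied by $\tilde b_j W^j$ with coefficients in $\overline{\mathcal{G}^{(d)}}$; Corollary~\ref{nub_integral} then translates this integral dependence into the desired inequality on $\nu_{\beta(\zeta)}$. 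Once this is done, substituting the bounds back into Definition~\ref{DefSlope1} collapses the minimum onto the two surviving terms, yielding the theorem.
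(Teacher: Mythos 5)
The paper does not prove this theorem: it is cited verbatim from \cite[Theorem~4.4]{BVComp}, and the only surviving trace here is Remark~\ref{remark_ordenes_intermedios}, which extracts from that proof precisely the inequality $\nu_{\beta(\zeta)}(\tilde b_j)/j\ge\ord_{\beta(\zeta)}(\mathcal G^{(d)})$ for $1\le j\le p^\ell-1$. Your reduction to that inequality is correct, and your Hasse--Schmidt computation
\begin{equation*}
\Delta_z^{p^\ell-j}(h)\;=\;\tilde b_j+\sum_{i=1}^{j-1}\binom{p^\ell-j+i}{i}\,\tilde b_{j-i}\,z^i
\end{equation*}
is verified (the $z^j$-term drops by Lucas). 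The $j=1$ base case is also fine: $\Delta_z^{p^\ell-1}(h)=\tilde b_1\in\mathcal O_{V^{(d)}}$, so $\tilde b_1 W\in\mathcal G^{(d+1)}\cap\mathcal O_{V^{(d)}}[W]=\mathcal G^{(d)}$ by Definition~\ref{eliminacion}.

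However, your argument has a genuine gap, and it is exactly the one you flag: the inductive step. You do not produce the integral dependence relation for $\tilde b_jW^j$ over $\overline{\mathcal G^{(d)}}$; you only describe, in vague terms, a plan to ``assemble'' the relations coming from $\Delta_z^{p^\ell-1}(h),\ldots,\Delta_z^{p^\ell-j}(h)$. This is not a proof, and the obstruction is real. The inductive hypothesis gives control on $\nu_{\beta(\zeta)}(\tilde b_{j-i})$, but the tail terms carry the factor $z^i$, which lives in the fibre direction; knowing $\nu_\zeta(z)\ge 1$ (which is all that $\zeta\in\Sing(\mathcal G^{(d+1)})$ gives) is strictly weaker than what one would need, because the target bound involves $\ord_{\beta(\zeta)}(\mathcal G^{(d)})$, which can exceed $1$. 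A naive application of Remark~\ref{minimo_suma} to the displayed identity therefore does not close the step. You would in fact need to exhibit, for each $j$, an explicit monic equation for $\tilde b_jW^j$ with coefficients in $\overline{\mathcal G^{(d)}}$ --- or argue directly with the weighted-homogeneous generators of the elimination algebra (cf. Example~\ref{Emininacion_Hiper} and the universal symmetric-function description referenced there). Also note that your intermediate claim $\tilde b_jW^j\in\overline{\mathcal G^{(d)}}$ is stronger than what the theorem asserts; the theorem (and Remark~\ref{remark_ordenes_intermedios}) only needs the order inequality at the given point $\beta(\zeta)$, and it is not obvious that the stronger containment holds in general. In sum: correct reduction, correct identity, but the heart of the argument is acknowledged rather than carried out.
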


\begin{remark}\label{remark_ordenes_intermedios} 
Using the arguments as in the proof of \cite[Theorem 4.4]{BVComp},
it follows that
	\begin{equation}
	\label{ordenes_intermedios}
	\frac{\nu_{\beta(\zeta)}(\tilde{b}_{j})}{j}\geq \ord_{\beta(\zeta)}(\mathcal{G}^{(d})),
	\end{equation}
whenever $1\leq j\leq p^{\ell}-1$. 	
\end{remark}

\begin{parrafo}\label{cleaning}{\bf Cleaning process} \cite[\S 5.1, \S 5.2, and  Proposition 5.3]{BVComp} 
Here we sketch the main ideas to find a $p$-presentation that maximizes $Sl(\mathcal{P})(\zeta)$, since we will be using them in Section \ref{seccion_demos}.
For a given $p$-presentation, and a point $\zeta\in \Sing(\G^{d+1})$, there are different possibilities: 

(A) $Sl(\mathcal{P})(\zeta)=\ord_{\beta(\zeta)}(\mathcal{G}^{(d)})$; 

\noindent (B) $Sl(\mathcal{P})(\zeta)=\frac{\nu_{\beta(\zeta)}(\tilde{b}_{p^{\ell}})}{p^{\ell}}<\ord_{\beta(\zeta)}(\mathcal{G}^{(d)})$, and then:

(B1) $\frac{\nu_{\beta(\zeta)}(\tilde{b}_{p^{\ell}})}{p^{\ell}}\notin {\mathbb Z}_{>0}$; 

(B2) $\frac{\nu_{\beta(\zeta)}(\tilde{b}_{p^{\ell}})}{p^{\ell}}\in {\mathbb Z}_{>0}$ and the initial part of $\tilde{b}_{p^{\ell}}$ at $\zeta$, $\text{In}_{\zeta}(\tilde{b}_{p^{\ell}})\in \text{Gr}_{\beta(\zeta)}({\mathcal O}_{V^{(d)},\zeta})$ is not a $p^{e}$-th power at $\text{Gr}_{\beta(\zeta)}({\mathcal O}_{V^{(d)},\zeta})$; 

(B3) $\frac{\nu_{\beta(\zeta)}(\tilde{b}_{p^{\ell}})}{p^{\ell}}\in {\mathbb Z}_{>0}$ and  $\text{In}_{\zeta}(\tilde{b}_{p^{\ell}})$ is  a $p^{e}$-th power at $\text{Gr}_{\beta(\zeta)}({\mathcal O}_{V^{(d)},\zeta})$.

It can be proven that changes of the form $uz+s$ produce a new $p$-presentation ${\mathcal P}'$ with $Sl(\mathcal{P'})(\zeta)>Sl(\mathcal{P})(\zeta)$ only in case (B3).  In such case, only  changes of the section of the form: 
$z':=z+s$
with $s\in {\mathcal O}_{V^{(d)},\beta(\zeta)}$, and $\nu_{\beta(\eta)}(s)\geq \nu_{\beta(\zeta)}(\tilde{b}_{p^{\ell}})/{p^e}$ lead to new $p$-presentations ${\mathcal P}'$  with  $Sl(\mathcal{P'})(\zeta)\geq Sl(\mathcal{P})(\zeta)$. Moreover, if $\xi\in\overline{\{\zeta\}}$, and $\zeta$ defines a regular closed subscheme at $\xi$, then to maximize the slope it suffices to consider changes of the form $z':=z+s$ with $s\in {\mathcal O}_{V^{(d)},\xi}$, see \cite[proof of Propositions 5.7 and 5.8]{BVComp}. 
\end{parrafo}

\begin{definition} \cite[Definition 5.4]{BVComp} \label{DefHiperNormalForm}
A $p$-presentation $\mathcal{P}(\beta,z,hW^{p^{\ell}})$ with  $h$ as   in (\ref{EqThBenVilla})  is in 
\emph{normal form}\footnote{This is called \emph{well-adapted presentation} in \cite{BVComp}.}
at a point $\zeta\in \Sing(\mathcal{G}^{(d+1)})$, if condition (A), (B1) or (B2) holds in \S \ref{cleaning}.
\end{definition}

Hence to maximize the value $Sl(\mathcal{P})(\zeta)$
for a given $p$-presentation $\mathcal{P}(\beta,z,hW^{p^{\ell}})$,
one can work with presentations in \emph{normal form}.
For simplicity we restrict the notion of normal form to $p$-presentations, but a similar concept can be defined for any presentation, see  \cite[\S 5.7]{BVIndiana}.

\begin{remark}
Given  a hypersurface $X$  and $\mathcal{G}^{(d+1)}$  as in \S \ref{Pres_Hiper}, for a point $\zeta\in\Sing(\mathcal{G}^{(d+1)})$, and a  $p$-presentation $\mathcal{P}(\beta,z,hW^{p^{\ell}})$ in normal form at $\zeta$, 
it can be shown that
\begin{equation} \label{EqHordpNormal}
\Hord^{(d)}_X(\zeta)=Sl(\mathcal{P}(\beta,z,hW^{p^{\ell}}))(\zeta).
\end{equation}
See \cite[Theorem 7.2, Corollary 7.3 and \S 5]{BVComp}.
\end{remark}
\bigskip

\noindent\textbf{The general case}
\medskip

\noindent Given an equidimensional variety $X$ of dimension $d$ over a perfect field $k$, and a singular point $\zeta\in X$, we would like to emulate  the previous statements,  which were valid for a hypersurface.
To this end, we will use the following result, which can be understood as a generalization of Weierstrass preparation theorem.

\begin{theorem}\label{presentacion_previa} 
	\cite[Theorem 6.5]{BVIndiana} Let $\Gn$ be a Rees algebra on a smooth scheme $\Vn$ over $k$ and let $\xi\in \Sing(\Gn)$ be a closed point with $\tau_{\Gn, \xi}\geq e\geq 1$. Then, at a suitable \'etale neighborhood of $\xi$, a $\Gn$-transversal morphism, $\beta: \Vn\to \Vne$, can be defined so that the following conditions hold: 
	\begin{enumerate}
		\item[(i)] There are global functions $z_1, \ldots, z_e$ in ${\mathcal O}_{\Vn}$  such that $\{dz_1,\ldots, dz_e\}$ forms a basis of $\Omega_{\beta}^1$, the module of $\beta$-relative differentials;
		\item[(ii)] There are positive integers $m_1,\ldots, m_e$; 
		\item[(iii)] There are elements $f_{1}W^{m_1},\ldots, f_{e}W^{m_e}\in \Gn$, such that:
		\begin{equation}
		\begin{array}{c}
		f_{1}(z_1)=z_1^{m_1}+a_1^{(1)}z_1^{m_1-1}+\ldots + a_{m_1}^{(1)}\in {\mathcal O}_{\Vne}[z_1],\\
		\vdots \\
		f_{e}(z_e)=z_e^{m_e}+a_1^{(e)}z_1^{m_e-1}+\ldots + a_{m_e}^{(e)}\in {\mathcal O}_{\Vne}[z_e],
		\end{array}
		\end{equation}
		for some global functions $a_i^{(j)}\in {\mathcal O}_{\Vne}$;
		\item[(iv)] The Rees algebra $\Gn$ has the same integral closure as:
\begin{equation}
{\mathcal O}_{\Vn}[f_{i}W^{m_i}, \Delta_{z_i}^{j_i}(f_{i})W^{m_i-j_i}]_{\substack{
1\leq j_i\leq m_i-1\\ i=1,\ldots, e}}\odot \beta^*(\Gne),
\end{equation}
where $\Gne$ is an elimination algebra of $\Gn$ on $\Vne$, and the set $\left\{\Delta_{z_i}^{j_i}\right\}_{\substack{1\leq j_i\leq m_i-1\\ i=1,\ldots,e}}$ consists of the relative differential operators described in by the Taylor operators.
\end{enumerate}
\end{theorem}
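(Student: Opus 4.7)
The plan is to reduce to the case where $\Gn$ is differentially saturated, use the $\tau$-invariant condition to produce a transversal projection and local sections, apply Weierstrass Preparation to obtain monic polynomials $f_i$, and finally verify the integral closure statement by reducing arbitrary elements of $\Gn$ modulo the $f_i$. Since the conclusion is formulated up to integral closure, and passing to $\overline{\Diff(\Gn)}$ preserves $\Sing(\Gn)$, the value of $\tau$, and $\Gn$-admissibility of projections, I may assume from the start that $\Gn$ is both integrally closed and differentially saturated.

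For (i), the hypothesis $\tau_{\Gn,\xi}\geq e$ combined with the remark after Definition \ref{def83} produces, after passing to a suitable \'etale neighborhood of $\xi$, a smooth $\Gn$-admissible projection $\beta:\Vn\to\Vne$. The construction moreover allows one to arrange that $\ker(d_\xi\beta)$ is the span of $\partial_{z_1},\ldots,\partial_{z_e}$ for some $z_1,\ldots,z_e\in\mathcal{O}_{V^{(n)},\xi}$ that form part of a regular system of parameters; after further shrinking these extend to global sections of $\Ovn$ and $\{dz_1,\ldots,dz_e\}$ is a basis of $\Omega^1_\beta$, which settles (i).

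For (ii) and (iii), transversality of $\beta$ ensures that $\In_\xi(\Gn)+\langle y_1,\ldots,y_{n-e}\rangle$ contains a power of the ideal $\langle z_1,\ldots,z_e\rangle$, where $y_1,\ldots,y_{n-e}$ complete the regular system of parameters. For each $i$ this yields a homogeneous element of $\In_\xi(\Gn)$ in which the monomial $z_i^{N_i}$ appears with nonzero coefficient; taking $m_i$ minimal for such an element and lifting it to $F_iW^{m_i}\in\Gn$, the standard Weierstrass Preparation Theorem in $\mathcal{O}_{V^{(n-e)},\beta(\xi)}[z_i]$ (after an \'etale base change so that we are in a henselian setting) yields a factorization $F_i=U\cdot f_i$ with $U$ a unit and $f_i$ monic of degree $m_i$ in $z_i$ with coefficients in $\mathcal{O}_{V^{(n-e)}}$; since $U$ is invertible, $f_iW^{m_i}\in\Gn$ as required.

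The main step, and the main obstacle, is (iv). Denote by $\mathcal{H}$ the algebra on the right-hand side. The inclusion $\mathcal{H}\subset\Gn$ is immediate: the $\Delta_{z_i}^{j_i}(f_i)W^{m_i-j_i}$ belong to $\Gn$ by differential saturation, and $\beta^{*}(\Gne)\subset\Gn$ by the very definition of elimination algebra (Definition \ref{eliminacion}). For the reverse inclusion up to integral closure, the plan is to show $\Gn\subset\overline{\mathcal{H}}$ by reducing an arbitrary homogeneous $gW^r\in\Gn$ modulo the Weierstrass generators $f_1,\ldots,f_e$: since each $f_i$ is monic in $z_i$, successive Weierstrass division expresses $g$ as a combination of the $\Delta_{z_i}^{j_i}(f_i)$ with coefficients in $\Ovn$, plus a remainder that is a polynomial in $z_1,\ldots,z_e$ of $z_i$-degree strictly less than $m_i$ with coefficients in $\mathcal{O}_{V^{(n-e)}}$; these remainder coefficients contribute to $\Gne$ in the appropriate weighted sense, and combining this with the finiteness property \S\ref{EliminationProperties}(3) produces an integral relation for $gW^r$ over $\mathcal{H}$. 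This step is the technical heart of the proof and is the analogue, in the general $\tau\geq e$ setting, of the hypersurface decomposition in Example \ref{Emininacion_Hiper} and of the finite-presentation amalgamation (\ref{amalgama}) of \S\ref{PresFinita}.
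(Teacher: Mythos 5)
This theorem is cited from \cite[Theorem 6.5]{BVIndiana}; the present paper does not reprove it, so there is no internal proof to compare against. Judged on its own terms, your outline identifies the right ingredients --- reduction to $\overline{\Diff(\Gn)}$, a transversal projection coming from $\tau_{\Gn,\xi}\geq e$, Weierstrass Preparation, and reduction modulo the $f_i$ for the integral-closure claim --- but the step you compress most is exactly where the difficulty lies.

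The gap is in the passage from $F_i$ to $f_i$ in part (iii). You lift a tangent-cone element in which $z_i^{N_i}$ appears to some $F_iW^{m_i}\in\Gn$ and then invoke Weierstrass Preparation ``in $\mathcal{O}_{V^{(n-e)},\beta(\xi)}[z_i]$''; but a priori $F_i$ lives in $\mathcal{O}_{V^{(n)},\xi}$ and can depend on all of $z_1,\ldots,z_e$. Weierstrass division in the $z_i$-direction alone produces a distinguished polynomial whose coefficients lie in a henselization of $\mathcal{O}_{V^{(n-e)}}[z_1,\ldots,\hat{z}_i,\ldots,z_e]$, not in $\mathcal{O}_{V^{(n-e)}}$ as the theorem requires. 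To separate the variables one needs an extra structural step --- an inductive chain of Weierstrass divisions (prepare in $z_e$, divide the remaining $F_j$ by $f_e$, re-prepare in $z_{e-1}$, and so on), together with a check that the intermediate elements remain in $\Gn$ --- or a differential-saturation argument producing generators whose initial forms already split in separated variables as in Example \ref{Pres_finita_tau}. In the finite-projection setting of \S\ref{PresFinita} this separation is automatic, since $f_i$ is the minimal polynomial of $\theta_i$ over $S$ and hence sits in $S[x_i]$ by construction, but the abstract statement has to manufacture it and your sketch does not. Granted that, you are right that (iv) rests on showing that the Weierstrass remainders land in $\Gne$ combined with the finiteness in \S\ref{EliminationProperties}(3), but as you acknowledge yourself this remainder analysis is the technical heart of the argument rather than a corollary of the setup.
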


\begin{remark} \label{RemRelDiffEtale}
Observe that since $\beta:V^{(n)}\to V^{(n-e)}$ is a smooth morphism of relative dimension~$e$, 
locally, $\mathcal{O}_{V^{(n)}}$ is \'etale over the polynomial ring $\mathcal{O}_{V^{(n-e)}}[z_1,\ldots,z_e]$.
The differential operators $\Delta_{z_i}^{j_i}$ are defined to be the Taylor differential operators. 
\end{remark}

\begin{definition} \label{DefSimpPres}
\cite[Definition 6.6]{BVIndiana}
With the setting and the notation of Theorem \ref{presentacion_previa}, the data,
	\begin{equation}
	{\mathcal P}(\beta, z_1,\ldots, z_e, f_{1}W^{m_1}, \ldots, f_{e}W^{m_e})
	\end{equation}
	that fulfills conditions (i)-(iv) in Theorem \ref{presentacion_previa} is a {\em simplified presentation of $\Gn$}.

Let $X_i$ be the hypersurface defined by $f_i(z_i)\in\calo_{V^{(n-e)}}[z_i]$.
Then we can also define
$$\Hord^{(n-e)}_{\G^{(n)}}:=\min_{i=1\ldots,e}\Hord^{(n-e)}_{X_i}.$$  
\end{definition}

\begin{remark} \label{presentacion_simp_mult} 
Now we go back to  Example \ref{Ex:Multiplicity},  where we consider a representation of the multiplicity of a variety $X\subset V$ at a closed point $\xi\in X$,
given by a Rees Algebra $\G=\calo_{V}[f_1W^{m_1},\ldots,f_eW^{m_e}]$.
We will see in \S \ref{p_presentations_suitable} that $\Diff(\G)$ satisfies conditions
(i)-(iv) in Theorem \ref{presentacion_previa}.
This leads us to define
$$\Hord^{(d)}_{X}(\zeta):=\Hord^{(d)}_{\Diff(\G)}=\min\{\Hord^{(d)}_{X_i}(\zeta)\},$$
where $X_i$ is the hypersurface defined by $f_i$, $i=1,\ldots,e$, and
$\zeta\in\Max\mult_X$. 
\end{remark}

\section{Main results}\label{seccion_demos}

In this section we will  address the proof  of  Theorem\ref{maintheorem}.  For
a given  point $\zeta\in X$ of maximum multiplicity $m>0$, we will want to compute the value $\Hord_{X}^{(d)}(\zeta)$ following the constructions given in    Section \ref{presentaciones}.
To this end, we will use 
Villamayor's presentations of the multiplicity in the \'etale topology, Theorem  
\ref{presentaciones_mult} below.
Finally, since we want to show that $\Hord_{X}^{(d)}(\zeta)$ can  actually be  computed at ${\mathcal O}_{X,\zeta}$,  without the need of \'etale topology, and using the Samuel slope of the local ring,  we will be using our results from Section \ref{Seccion3}.

\begin{theorem}\label{presentaciones_mult} \cite[Lemma 5.2, \S6, Theorem 6.8]{V}  (Presentations for the Multiplicity  function)
 	Let $X=\text{Spec}(B)$ be an affine equidimensional algebraic variety of dimension $d$  defined over a perfect field $k$, and let $\xi\in  \underline{\text{Max}}\text{\,Mult}_X$ be a closed  point of   multiplicity $m>1$. Then,  there is an \'etale neighborhood $B'$ of $B$,   mapping $\xi'\in \Spec(B')$ to $\xi$, so that there is   a smooth $k$-algebra $S$ together with a
 	finite  morphism     $\alpha: \Spec(B')\to \Spec(S)$ of generic rank $m$, i.e., if $K(S)$ is the quotient field of $S$, then $[K(S)\otimes_S B: K(S)]=m$. Write $B'=S[\theta_1,\ldots, \theta_e]$.  Then:  
 		\begin{itemize}
 		\item[(i)] If $f_i(x_i)\in K(S)[x_i]$ denotes the minimum polynomial of $\theta_i$ over $K(S)$ for $i=1,\ldots,e$, then $f_i(x_i)\in S[x_i]$  and there is a commutative diagram: 
 	\begin{equation}
 			\begin{aligned}
 				\label{diagrama_presentacion}
 		\xymatrix{R=S[x_1,\ldots, x_e] \ar[r] & S[x_1,\ldots, x_e]/\langle f_1(x_1),\ldots, f_e(x_e)\rangle \ar[r]   & B'  \\
 			& S \ar[u] \ar[ur]_{\alpha^*} \ar[ul]^{\beta^*} &  }
 			\end{aligned}
 	\end{equation}

 \item[(ii)]  Let $V^{(d+e)}=\Spec(R)$, and let ${\mathcal I}(X')$ be the defining ideal of $X'$ at $V^{(d+e)}$. Then 
 		$$\langle f_1,\ldots, f_e\rangle \subset {\mathcal I}(X');$$ 
 		\item[(iii)] Denoting by $m_i$ the maximum order of the hypersurface $H_i=\{f_i=0\}\subset V^{(d+e)}$,  
 		the differential Rees algebra
 		\begin{equation}
 			\label{G_Representa}
 			\mathcal{G}^{(d+e)}=\Diff(R[f_1(x_1)W^{m_1}, \ldots, f_{e}(x_e)W^{m_{e}}])
 		\end{equation}
 		represents  the top  multiplicity locus of $X$, $\underline{\text{Max}}\text{\,Mult}_X$,  at   $\xi$ in $V^{(d+e)}$.

 	\end{itemize}
 \end{theorem}
     
 \begin{parrafo} \label{setting_prueba} {\bf The setting and the notation  for the proof of Theorem \ref{maintheorem}.}
Let $\xi \in X$ be a closed point of multiplicity $m>1$, and let  $(B,{\mathfrak m}, k(\xi))$ the local ring at the point. Applying Theorem \ref{presentaciones_mult}     there is an \'etale extension $ (B, {\mathfrak m}, k(\xi)) \to (B', {\mathfrak m}', k')$ for which we can find a smooth  $k'$-algebra $S$   and a finite inclusion of generic rank $m$, 
$$S\to B'=S[\theta_1,\ldots, \theta_e].$$
Thus,  statements (i), (ii) and (iii) of  Theorem \ref{presentaciones_mult} hold for $S\subset B'$. In particular,  we have a commutative diagram like  (\ref{diagrama_presentacion}). With this notation, which we fix  for  the rest of the section, we will be simultaneously using $\alpha(\zeta')$ and  $\beta (\zeta')$ to denote  the image in  $\Spec(S)$ of a point  $\zeta'\in \Spec(B')$. We will  choose the first notation if  we want to use   the properties of the finite projection from $\Spec(B')$. The second notation will be  convenient  to emphasize the fact that $\zeta'$ is also a point in the smooth scheme $\Spec(R)$. Sometimes we   will use $V^{(d+e)}$ to refer to $\Spec(R)$. This will help us   recall  the dimension of  the smooth ambient space where $\Spec(B')$ is embedded,  and the space where the Rees algebra $\G^{(d+e)}$ is defined.
And for similar  reasons we occasionally will write   $V^{(d)}$ for  $\Spec(S)$, specially if the elimination algebra $\Gd$ of $\G^{(d+e)}$ is involved (see Section \ref{ElimAlg}).
 
 \medskip
 
Theorem \ref{presentaciones_mult} provides three  pieces of information that will be specially relevant in our arguments: 
\begin{enumerate}
	\item[{\bf (I)}] {\em The existence of the  \'etale neighborhood of $B$, $B'$ together with the  finite extension   $S\subset B'$.} To be able to compare the Samuel slope of $B$ and $B'$ (in the extremal case) we will need to know that $B'$ can be constructed having the same residue field as $B$. This issue is addressed in \S\ref{etale_residual}. 
	\item[{\bf (II)}] {\em The Rees algebra ${\mathcal G}^{(d+e)}$  representing the top multiplicity locus of $X'=\Spec(B')$}. We will see in \S \ref{p_presentations_suitable} below how to use this Rees algebra  to compute the function $\Hord_{X'}^{(d)}$ using the results from Section \ref{presentaciones}.   
	\item[{\bf (III)}] An {\em algebraic presentation of $B'$ as an algebra over $S$, $S[\theta_1, \ldots, \theta_e]$}. We will see in \S \ref{suitable_presentations} below how to find suitable presentations that will help us computing the Samuel slope in the extremal case. 
	
\end{enumerate}

After addressing (I), (II), (III), and after establishing some technical results, we will give the proof of Theorem \ref{maintheorem}.  

\begin{parrafo} \label{etale_residual} {\bf (I) On the \'etale extension of Theorem \ref{presentaciones_mult}.}
\end{parrafo} 		

\noindent We start by stating a giving an idea of the proof of Proposition \ref{suma_directa} below. This result was  sketched in \cite[\S 6.11]{V} and a complete proof  can be found in  \cite[Appendix  A]{Br_V2}. Here we will focus  on    the three main steps of  the argument  that require considering \'etale extensions.   Remark \ref{lema_lema} and Proposition \ref{Caso_tau_e_reformulado_b} below will be relevant to treat the proof of Theorem \ref{maintheorem} in the extremal case. 
\begin{proposition} \label{suma_directa} 
	\cite[\S 6.11]{V}, \cite[Appendix A]{Br_V2}  Let $X$ be an   equidimensional variety defined over a perfect field $k$ and let $\xi\in X$ be a closed point of multiplicity $m>1$. Let $(B,{\mathfrak m}, k(\xi))$ be the local ring at the point.  Then   there is a local   \'etale extension $(B, {\mathfrak m}, k(\xi)) \to (B', {\mathfrak m}', k')$ such that: 
	\begin{enumerate}
		\item[(i)] There is a smooth $k'$-algebra $S$ and a finite morphism $S\to B'$ of generic rank equal to $m$; 
		\item[(ii)] If $\alpha: \Spec(B')\to \Spec(S)$, then    the morphism  $\Gr_{\mathfrak{m}_{\alpha(\xi')}}(S)\to\Gr_{\mathfrak{m}_{\xi'}}(B')$
		is injective, and if,  in addition, $B$ is in the extremal case, then
		$${\mathfrak m}_{\alpha(\xi')}/{\mathfrak m}_{\alpha(\xi')}^2 \oplus \ker(\lambda_{\xi'})={\mathfrak m}_{\xi'}/{\mathfrak m}_{\xi'}^2.$$  
	\end{enumerate} 
\end{proposition}  
\noindent{\em Sketch of the proof. } {\bf Step 1:} If $k(\xi)$ is the residue field at $\xi$, then, after considering the extension
$B_1=\calo_{X,\xi}\otimes_k k(\xi)$
it can be assumed that the point of interest is rational.
Let ${\mathfrak m}_1$ be a maximal ideal of $B_1$ dominating $\mathfrak{m}_{\xi}$.
Then if  $ k_1:=B_1/{\mathfrak m}_1$, we have that $k_1=k(\xi)$.

{\bf Step 2:} After  a finite extension of  the base field $k_1$, $k_2$,  considering  the base change $B_2=B_1\otimes_{k_1}k_2$, there is a maximal ideal ${\mathfrak m}_2\subset B_2$,
dominating ${\mathfrak m}_1$,
such that ${\mathfrak m}_2$ contains a reduction generated by $d$ elements, $\kappa_1,\ldots, \kappa_d$.
To achieve this step, a graded version of Noether's Normalization Lemma is used at the graded ring $\text{Gr}_{{\mathfrak m}_2}(B_2)$.
Letting $k_2=B_2/{\mathfrak m}_2$ we get a
$k_2$-morphism from a polynomial ring in $d$ variables with coefficients in $k_2$ to some localization of $B_2$:
\begin{equation}
	\label{primera_aproximacion}
	\begin{array}{rclr}
		S_2:=k_2[Y_1,\ldots,Y_d] & \longrightarrow & (B_2)_f & \\
		Y_i & \mapsto & \kappa_i  & \text{ for } i=1,\ldots, d.
	\end{array}
\end{equation}

To ease the notation set $B_2:=(B_2)_f$.

{\bf Step 3:}  Finally, after considering an \'etale extension $S_3$ of $S_2$ (inside the henselization of the local ring $(S_2)_{\langle Y_1,\ldots, Y_d\rangle }$; the strict henselization is not needed in this step),
$$\xymatrix@R=20pt@C=30pt{B_2\ar[r]  & B_3:=B_2\otimes_{S_2}S_3 \\
	S_2 \ar[u]\ar[r]  & S_3\ar[u],}$$
it can be assumed that the extension $S_3\to B_3$ is finite of generic rank equal to $m$.
Let $\mathfrak{n}_3\subset S_3$ be the  maximal ideal dominating $\langle Y_1,\ldots, Y_d\rangle$.
Notice that the residue field of $S_3$ at $\mathfrak{n}_3$ is again $k_2$.
There is a maximal ideal $\mathfrak{m}_3\subset B_3$ dominating $\mathfrak{m}_2$ and
if $k_3=B_3/\mathfrak{m}_3$ then $k_3=k_2$.
To conclude, set $B'=(B_3)_{\mathfrak{m}_3}$ and   $S=S_3$. 

Regarding to (ii),  it suffices to observe that   that from the way the finite projection $S\to B'$ is constructed (see  step 2), the morphism $\Gr_{\mathfrak{m}_{\alpha(\xi')}}(S)\to\Gr_{\mathfrak{m}_{\xi'}}(B')$ 
is injective.  Note that the elements $\kappa_1,\ldots, \kappa_d$ are analytically irreducible over $k_2$.  \qed

\begin{remark} \label{lema_lema}  In the proof of Proposition \ref{suma_directa}  we have a sequence of \'etale local extensions:
	\begin{equation*}
		\begin{array}{ccccccc}
			(\calo_{X,\xi}, \mathfrak{m})& \to & ((B_1)_{\mathfrak{m}_1}, \mathfrak{m}_1) & \to &
			((B_2)_{\mathfrak{m}_2},\mathfrak{m}_2) & \to & ((B_3)_{\mathfrak{m}_3}, \mathfrak{m}_3)=(B', {\mathfrak m}'),
		\end{array}
	\end{equation*}
	leading to the  (\'etale) extensions  of graded rings:
	\begin{equation}\label{graduados_etales}
		\text{Gr}_{{\mathfrak m}_{\xi}}(\calo_{X,\xi})=\text{Gr}_{{\mathfrak m_1}}(B_1)\longrightarrow \text{Gr}_{{\mathfrak m_1}}(B_1)\otimes_{k_1} k_2= \text{Gr}_{{\mathfrak m_2}}(B_2)=\text{Gr}_{{\mathfrak m'}}(B'). 
	\end{equation}
Proposition \ref{Caso_tau_e_reformulado_b} below guarantees that the field extension in Step 2 of the proof is not needed if $(B,{\mathfrak m})$ is in the extremal case. Under this assumption all the graded rings in (\ref{graduados_etales})   are isomorphic.
\end{remark}

\begin{proposition} \label{Caso_tau_e_reformulado_b}
	Let $X$ be an equidimensional algebraic variety of dimension $d$ defined over a perfect field $k$,  and  let $\xi\in X$ be a  closed point of multiplicity $m>1$  with local ring $(\mathcal O_{X,\xi}, {\mathfrak m}_{\xi}, k(\xi))$. Assume that the embedding dimension at $\xi$ is $(d+t)$ for some $t\geq 1$.
	If $\xi$ is in the extremal case, 
	then ${\mathfrak m}_{\xi}$ has a reduction ${\mathfrak a}\subset {\mathfrak m}_{\xi}$ generated by $d$-elements.
\end{proposition}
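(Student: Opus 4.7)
The strategy is to build the reduction explicitly from a minimal system of generators of $\mathfrak{m}_\xi$ compatible with the decomposition afforded by $\ker(\lambda_\xi)$, and then to exploit the fact that elements of $\ker(\lambda_\xi)$ correspond to nilpotents in the associated graded ring in order to conclude by a Nakayama-type argument. No \'etale extension or finite projection is needed for this step.

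Since $\xi$ is in the extremal case, $\dim_{k(\xi)}\ker(\lambda_\xi)=t$, so I would first choose a $\lambda_\xi$-sequence $\gamma_1,\ldots,\gamma_t\in\mathfrak{m}_\xi$ whose classes form a basis of $\ker(\lambda_\xi)$, and then complete it to a minimal set of generators of $\mathfrak{m}_\xi$ by adding $\kappa_1,\ldots,\kappa_d\in\mathfrak{m}_\xi$ such that $\{\bar\kappa_i\}_{i=1}^d\cup\{\bar\gamma_j\}_{j=1}^t$ is a $k(\xi)$-basis of $\mathfrak{m}_\xi/\mathfrak{m}_\xi^2$. The natural candidate for the reduction is $\mathfrak{a}:=\langle\kappa_1,\ldots,\kappa_d\rangle$.

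To verify this, I would invoke the characterization from \S\ref{DefLambdaBig} that $\ker(\Lambda_{\mathfrak{m}_\xi})$ coincides with the nilradical of $\Gr_{\mathfrak{m}_\xi}(\calo_{X,\xi})$. Since each $\bar\gamma_i$ lies in $\ker(\lambda_\xi)\subset\ker(\Lambda_{\mathfrak{m}_\xi})$, one has $\bar\gamma_i^{\,n_i}=0$ in $\Gr_{\mathfrak{m}_\xi}(\calo_{X,\xi})$ for some $n_i\in\mathbb{N}$. Letting $J$ denote the graded ideal generated by $\bar\kappa_1,\ldots,\bar\kappa_d$, the quotient $\Gr_{\mathfrak{m}_\xi}(\calo_{X,\xi})/J$ is a graded $k(\xi)$-algebra generated in degree one by the nilpotents $\bar\gamma_1,\ldots,\bar\gamma_t$; hence its graded pieces vanish for every $n\geq 1+\sum_{i=1}^t(n_i-1)$. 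Translating this back to $\calo_{X,\xi}$ gives $\mathfrak{m}_\xi^{\,n}\subseteq\mathfrak{a}\,\mathfrak{m}_\xi^{\,n-1}+\mathfrak{m}_\xi^{\,n+1}$ for such $n$, and Nakayama applied to the finitely generated module $\mathfrak{m}_\xi^{\,n}/\mathfrak{a}\,\mathfrak{m}_\xi^{\,n-1}$ yields $\mathfrak{m}_\xi^{\,n}=\mathfrak{a}\,\mathfrak{m}_\xi^{\,n-1}$, which is the standard characterization of $\mathfrak{a}$ being a reduction of $\mathfrak{m}_\xi$.

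The crucial ingredient is the nilradical description of $\ker(\Lambda_{\mathfrak{m}_\xi})$ recalled in \S\ref{DefLambdaBig}; once that is in hand, the argument reduces to routine bookkeeping of nilpotency indices plus Nakayama. The main obstacle I anticipate is really only notational: keeping the graded-to-local translation clean, and verifying that the $\kappa_i$'s produced are simultaneously a set of generators of $\mathfrak{a}$ and of a complement to $\ker(\lambda_\xi)$ in $\mathfrak{m}_\xi/\mathfrak{m}_\xi^2$, which matches the refined statement alluded to in the introduction.
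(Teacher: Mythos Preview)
Your proposal is correct and follows essentially the same approach as the paper: choose a minimal generating set of $\mathfrak{m}_\xi$ so that $t$ of the generators form a $\lambda_\xi$-sequence (hence are nilpotent in $\Gr_{\mathfrak{m}_\xi}(\mathcal{O}_{X,\xi})$), and conclude that the remaining $d$ generators span a reduction. The only cosmetic difference is that the paper invokes the criterion that $\langle\kappa_1,\ldots,\kappa_d\rangle$ is a reduction of $\mathfrak{m}_\xi$ iff $\Gr_{\mathfrak{m}_\xi}(\mathcal{O}_{X,\xi})/\langle\overline{\kappa_1},\ldots,\overline{\kappa_d}\rangle$ has Krull dimension zero (citing \cite[Theorem 10.14]{H_I_O}), whereas you spell out the equivalent Nakayama argument directly.
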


\begin{proof}
	To prove the statement it is enough to show that there are $d$-elements $\kappa_1,\ldots, \kappa_d\in {\mathfrak m}_{\xi}\setminus {\mathfrak m_{\xi}}^2$ such that if $\overline{\kappa_1},\ldots, \overline{\kappa_d}$ denote  their images  in $ {\mathfrak m}_{\xi} /{\mathfrak m}_{\xi}^2 $, then $\text{Gr}_{{\mathfrak m}_{\xi}}({\mathcal O}_{X,\xi})/\langle \overline{\kappa_1},\ldots, \overline{\kappa_d} \rangle$  is a graded ring of dimension 0 (see \cite[Theorem 10.14]{H_I_O}).
	
	Since $\dim_{k(\xi)}{\mathfrak m}_{\xi}/{\mathfrak m}_{\xi}^2=d+t$ and by hypothesis $\dim_{k(\xi)}\ker(\lambda_{\xi})=t$,  we can find generators  of ${\mathfrak m}_{\xi}$,
	\begin{equation}
		\label{completar_bases}
		\kappa_1,\ldots, \kappa_d, \delta_1, \ldots, \delta_t
	\end{equation}
	such that  $\overline{\delta_1}, \ldots, \overline{\delta_t}$ form a basis of $\ker(\lambda_{\xi})$. Notice that
	the elements   $\overline{\delta_1}, \ldots, \overline{\delta_e}$ are nilpotent in   $\text{Gr}_{{\mathfrak m}_{\xi}}({\mathcal O}_{X,\xi})/\langle \overline{\kappa_1},\ldots, \overline{\kappa_d} \rangle$
	(see \S \ref{DefLambdaBig}).
	Since  the graded ring $\text{Gr}_{{\mathfrak m}_{\xi}}({\mathcal O}_{X,\xi})$  is generated   in degree one by $\{\overline{\kappa_1},\ldots, \overline{\kappa_d}, \overline{\delta_1}, \ldots, \overline{\delta_t} \}$ it follows that  the quotient
	$\text{Gr}_{{\mathfrak m}_{\xi}}({\mathcal O}_{X,\xi})/\langle \overline{\kappa_1},\ldots, \overline{\kappa_e}\rangle $ is a graded ring of dimension zero and hence      $\langle \kappa_1,\ldots, \kappa_d\rangle$ is a reduction of ${\mathfrak m}_{\xi}$.
\end{proof}

Observe that the previous proposition holds for any local Noetherian ring in the extremal case. 

\begin{parrafo} \label{p_presentations_suitable}
{\bf (II) $p$-presentations and the computation of $\Hord_{X'}^{(d)}$.}
\end{parrafo}

\noindent Theorem \ref{presentaciones_mult} says that  the  ${\mathcal O}_{V^{(d+e)}}$-Rees algebra  $\mathcal{G}^{(d+e)}$    in (\ref{G_Representa})
represents the maximum multiplicity locus of $\Spec(B')$ in $V^{(d+e)}$ (see \S \ref{RepreMult}). We can assume that the order $m_i$ of each $f_i(x_i)\in S[x_i]$ is greater than $1$.   Notice also  that 
$\G_i^{(d+1)}:=\Diff(S[x_i][f_i(x_i)W^{m_i}])$ represents the maximum multiplicity of the hypersurface defined by $f_i(x_i)$ in $V_i^{(d+1)}=\text{Spec}(S[x_i])$, for $i=1,\ldots, e$.  By identifying $\G_i^{(d+1)}$ with its pull-back in $V^{(d+e)}$, we have that: 
\begin{equation}
	\label{amalgama}
	\mathcal{G}^{(d+e)}=\Diff(\G_1^{(d+1)})\odot \ldots \odot \Diff(\G_{e}^{(d+1)}).
\end{equation}

The natural inclusion $S\subset R=S[x_1,\ldots, x_{e}]$ induces  smooth projections, $\beta:  V^{(d+e)}\to V^{(d)}=\text{Spec}(S)$, and $\beta_i: V_i^{(d+1)}=\Spec(S[x_i])\to V^{(d)}=\text{Spec}(S)$ for $i=1,\ldots, e$. Also, observe   that $\tau_{{\mathcal G}^{(d+e)},\xi'}\geq e$.  This follows from the fact that the initial forms at $\xi'$ of the polynomials $f_i(x_i)\in S[x_i]$ depend on different variables 
(see \cite[\S 4.2]{BEP2} for further details). Hence,  $\beta$ is $\G^{(d+e)}$-admissible, and   each $\beta_i$ is $\G^{(d+1)}_i$-admissible. Thus   $\mathcal{G}^{(d)} =\mathcal{G}^{(d+e)}\cap S[W] $ is an elimination algebra of $\G^{(d+e)}$, and, moreover, up to integral closure,    
\begin{equation}\label{amalgama_eliminacion_1}
	\G^{(d)}= \G _1^{(d)} \odot \ldots \odot \G _{e}^{(d)} \subset S[W],
\end{equation}
where $\G _i^{(d)}$ is an elimination algebra of $\G^{(d+1)}_i$ on  $\Vd$ (see \cite[\S 3.8]{BEP2}).
\medskip

As indicated in Remark  \ref{presentacion_simp_mult}, $\mathcal{G}^{(d+e)}$ has the same integral closure as
\begin{equation}
	\label{primera_presentacion}
	R[f_i(x_i)W^{m_i}, \Delta_{x_i}^{j_i}(f_i(x_i))W^{m_i-j_i}]_{1\leq j_i\leq m_i-1} \odot \beta^*(\Gd),
\end{equation}
which in turns is a simplified presentation of $\mathcal{G}^{(d+e)}$ (see Theorem \ref{presentacion_previa}). We will write:
\begin{equation}
	\label{desc_f}
	f_i(x_i)=x_i^{m_i}+a_{1}^{(i)}x_i^{m_i-1}+\ldots+a_{m_i}^{(i)},
\end{equation}
with $a_j^{(i)}\in S$, for $j=1,\ldots, m_i$, and $i=1,\ldots, e$. 
\medskip

\noindent{\bf (A) The slope of a $p$-presentation at the closed point of $\Spec(B')$.}
\medskip

    \noindent     Suppose that $\xi'\in X'=\Spec(B')$ maps to $\xi$, and let ${\mathfrak m}_{\xi'}\subset B'$ be the corresponding maximal ideal.   Since the generic rank of $S\to B'$ equals the multiplicity at $\xi'$, by Zariski's multiplicity formula for finite projections (\cite[Chapter 8, \S 10, Theorem 24]{Z-SII}) we have that:
        \begin{enumerate}
            \item[(i)] The point $\xi'$ is the only one mapping to $\alpha(\xi')\in \Spec(S)$;
            \item[(ii)] The residue fields  $k(\xi')$  and $k(\alpha(\xi'))$ are equal;
            \item[(iii)] The expansion of the maximal ideal of $\alpha(\xi')$, ${\mathfrak m}_{\alpha(\xi')}B'$,  is a reduction of ${\mathfrak m}_{\xi'}$.
        \end{enumerate}

        From (ii) it follows that, after a translation of the form $\theta_i+s_i$, for some $s_i\in S$, we can also assume that $\theta_i\in {\mathfrak m}_{\xi'}$ for $i=1,\ldots, e$, and that in addition,
        ${\mathfrak m}_{\xi'}={\mathfrak m}_{\alpha(\xi')}B'+\langle \theta_1,\ldots, \theta_e\rangle$.

    Since   $\theta_i\in {\mathfrak m}_{\xi'}$, we have  that $\nu_{\alpha(\xi')}(a_j^{(i)})\geq 1$, for    $j=1,\ldots, m_i$ and $i=1,\ldots, e$ in (\ref{desc_f}).
Moreover, since $\xi'\in\Sing(\mathcal{G}^{(d+e)})$, necessarily 
$\nu_{\alpha(\xi')}(a_j^{(i)})=\nu_{\beta(\xi')}(a_j^{(i)})\geq j$.

        By \S \ref{Def_p_PresHiper} and Remark \ref{presentacion_simp_mult}, after applying suitable Taylor operators to the elements $f_i(x_i)\in R$, we get  that $\mathcal{G}^{(d+e)}$ is weakly equivalent to:
        \begin{equation}
        \label{segunda_presentacion}
        R[h_i(x_i)W^{p^{\ell_i}}, \Delta_{x_i}^{j_i}(h_i(x_i))W^{p^{\ell_i}-j_i}]_{1\leq j_i\leq \ell_i-1} \odot \beta^*(\Gd),
        \end{equation}
        where for each $i=1,\ldots, e$,  $h_i(x_i)\in S[x_i]\subset R$ is a monic polynomial of order $p^{\ell_i}$ for some $\ell_i\geq 1$,
        \begin{equation}
        \label{des_h}
h_i(x_i)=x_i^{p^{\ell_i}}+\tilde{a}_{1}^{(i)}x_i^{p^{\ell_i}-1}+
        \ldots+\tilde{a}_{p^{\ell_i}}^{(i)},
        \end{equation}
        with $\tilde{a}_j^{(i)}\in S$, for $j=1,\ldots, p^{\ell_i}$. Observe that
        $\nu_{\alpha(\xi')}(\tilde{a}_j^{(i)})\geq j$  for $j=1,\ldots, p^{\ell_i}$ and $i=1,\ldots, e$. Expression (\ref{segunda_presentacion})  is a  $p$-presentation ${\mathcal P}$ of $\mathcal{G}^{(d+e)}$ at $\xi$  (see \S \ref{Def_p_PresHiper} and Remark \ref{presentacion_simp_mult}). Notice that the differential operators in (\ref{segunda_presentacion}) are elements in $\Diff_{V^{(d+e)}/\Vd}$. 
\medskip

With the previous notation,  the slope of the $p$-presentation ${\mathcal P}$  at $\xi'$  (\ref{segunda_presentacion}) is 
        \begin{equation}\label{slope_p}
       Sl(\mathcal{P})(\xi')= \min_{i=1,\ldots,e} \left\{\frac{\nu_{\alpha(\xi')}(\tilde{a}^{(i)}_{p^{\ell_i}}) }{p^{\ell_i}}, \ord_{\alpha(\xi')}(\Gd)\right\}.
        \end{equation}
 From the exposition in \S\ref{cleaning}, it follows that a $p$-presentation ${\mathcal P}'$ with $Sl(\mathcal{P'})(\xi')=\Hord_{X'}^{(d)}(\xi')$ can be found  starting from the presentation ${\mathcal P}$   after considering translations of the form $\theta_i':=\theta_i+s_i$ with $s_i\in S$, and so that for each translation
\begin{equation}
\label{pendiente_hord_cerrado}
\nub_{{\mathfrak m}_{\alpha(\xi')}}(s_i)\geq \frac{{\nu_{{\mathfrak m}_{\alpha(\xi')}}(\widetilde{a}^{(i)}_{p^{\ell_i}}) }}{p^{\ell_i}}.
\end{equation} 
        \medskip
        
Finally, the restriction of $\mathcal{G}^{(d+e)}$ to $B'$, $\G_{B'}$, is finite over the expansion of $\Gd$ in $B'$, $\Gd B'$ (see \cite[Theorem 4.11]{V} and \cite[Corollary 7.7]{COA}). Write   $\G_{B'}=\oplus_nJ_nW^n$ and  define 
$$\overline{\ord}_{\xi'}(\G_{B'}):=\min\left\{\frac{\nub_{\xi'} (J_n)}{n}: n\in {\mathbb N}\right\}.$$
Then, by  
  Proposition \ref{ExtFinNuBar}, and using the fact that 
  ${\mathfrak m}_{\alpha(\xi')}B'$ is  a reduction of ${\mathfrak m}'$,     it can be checked that
\begin{equation}
\label{orden_G_J}
\ord_{\alpha(\xi')}(\Gd) = \overline{\ord}_{\xi'}(\G_{B'}), 
\end{equation}
(here it suffices to use   arguments  similar to those in the proof of  \cite[Proposition 0.20]{LejeuneTeissier1974}).

\

\noindent{\bf (B)  The slope of a $p$-presentation at non-closed points of  $\Spec(B')$.}
\medskip

\noindent With the same setting and notation as before, now let $\eta\in X$ be a non-closed point of multiplicity $m$  with $\xi\in \overline{\{\eta\}}$.  Let
 $\eta'\in \Spec(B')$ be a point mapping to $\eta$,   let ${\mathfrak p}_{\eta'}\subset B'$ be the corresponding prime and set ${\mathfrak p}_{\alpha(\eta')}:={\mathfrak p_{\eta'}}\cap S$. Again, by Zariski's multiplicity formula for finite projections we have that:
    \begin{enumerate}
    \item[(i')] The point $\eta'$ is the only one mapping to $\alpha(\eta')\in \Spec(S)$;
    \item[(ii')] The residue fields  $k(\eta')$  and $k(\alpha(\eta'))$ are equal;
    \item[(iii')] The expansion of the maximal ideal ${\mathfrak p}_{\alpha(\eta')}S_{{\mathfrak p}_{\alpha(\eta')}}$, ${\mathfrak m}_{\alpha(\eta')}B_{{\mathfrak p}_{\eta'}}$,  is a reduction of ${\mathfrak m}_{\eta'}:={\mathfrak p}_{\eta'}B'{\mathfrak p}_{\eta'}$.
\end{enumerate}
From  (i') it follows that $B'\otimes_SS_{{\mathfrak p}_{\alpha(\eta')}}$ is local (thus
$B'_{{\mathfrak p}_{\eta'}}=S_{{\mathfrak p}_{\alpha(\eta')}}[\theta_1,\ldots,\theta_e]$). By (ii'),   after translating $\theta_i$ by elements of $S_{{\mathfrak p}_{\alpha(\eta')}}$, we can assume that $\theta_i\in {\mathfrak m}_{\eta'}$.
The localization at $\eta'$  of the  p-presentation ${\mathcal P}$ at $\xi'$ (\ref{segunda_presentacion}) can be used to compute   $\Hord_{X'}^{(d)}(\eta')$.   
Interpreting $\eta'$ as a point in $V^{(d+e)}$, and using the fact that $\eta'\in \Sing(\G^{(d+e)})$, i.e., $\eta'$ is a point of multiplicity $m$ in $X'$,  it follows that
$\nu_{\alpha(\eta')}(a_{j}^{(i)})\geq j$ for $i=1,\ldots, e$, and $j=1,\ldots, m_e$
(see \cite[Propositions 5.4 and 5.7]{V}).
\medskip

\noindent{\bf  (C)  The slope of a $p$-presentation at non-closed points defining regular subschemes of  $\Spec(B')$.}
\medskip

\noindent Now suppose that   $\eta'$ is the generic point of a regular closed subscheme at $\xi'$. In such case, it  can be shown that 
        ${\mathfrak p}_{\alpha(\eta')}$ also defines a regular closed subscheme at $\alpha(\xi')$ (cf. \cite[Proposition 6.3]{V}). In addition,   after translating the elements $\theta_i$ by elements in $S$, it can be assumed that
        $B'=S[\theta_1,\ldots,\theta_e]$ with $\theta_i\in {\mathfrak p}_{\eta'}$, and that moreover, ${\mathfrak p}_{\alpha(\eta')}B$ is a reduction of ${\mathfrak p}_{\eta'}$ (without localizing at ${\mathfrak p}_{\eta'}$, see \cite[Lemma 3.6]{COA}).

    As  we argued above, again, interpreting $\eta'$ as a point in $V^{(d+e)}$, and using the fact that $\eta'\in \Sing(\G^{(d+e)})$, i.e., $\eta'$ is a point of multiplicity $m$ in $X'$,  it follows that
     $\nu_{\alpha(\eta')}(a_{j}^{(i)})\geq j$ for $i=1,\ldots, e$, and $j=1,\ldots, m_e$ in (\ref{desc_f}) 
(see \cite[Propositions 5.4 and 5.7]{V}).  But now, because  ${\mathfrak p}_{\alpha(\eta')}$ determines a closed  regular subscheme  at $\alpha(\xi')$, its ordinary powers and symbolic powers coincide on $S$. Therefore also
     $\nu_{{\mathfrak p}_{\alpha(\eta')}}(a_{j}^{(i)})\geq j$  for $i=1,\ldots, e$, and $j=1,\ldots, m_e$.  Hence  it follows that for the coefficients in (\ref{des_h}),
     \begin{equation}
     \label{des_h_p}
     \nu_{{\mathfrak p}_{\alpha(\eta')}}(\tilde{a}_j^{(i)})\geq j
     \end{equation}
for    $j=1,\ldots, p^{\ell_i}$ and $i=1,\ldots, e$.

        \

With the previous notation,  the slope of the $p$-presentation ${\mathcal P}$ at $\eta'$ (\ref{segunda_presentacion})  equals to:
        \begin{equation}\label{slope_p_no_cerrado}
         Sl(\mathcal{P})(\eta')=\min_{i=1,\ldots,e} \left\{\frac{\nu_{\alpha(\eta')}(\widetilde{a}^{(i)}_{p^{\ell_i}}) }{p^{\ell_i}}, \ord_{\alpha(\eta')}(\Gd)\right\}=
        \min_{i=1,\ldots,e}\left\{\frac{\nu_{{\mathfrak p}_{\alpha(\eta')}}(\widetilde{a}^{(i)}_{p^{\ell_i}}) }{p^{\ell_i}}, \ord_{{\mathfrak p}_{\alpha(\eta')}}(\Gd)\right\},
\end{equation}
see \cite[Definition 6.7]{BVIndiana}.   Going back to the discussion in \S\ref{cleaning},  recall that a $p$-presentations ${\mathcal P}'$ with $Sl(\mathcal{P'})(\eta')=\Hord_{X'}^{(d)}(\eta')$ can be found after considering translations of the form $\theta_i':=\theta_i+s_i$ with $s_i\in S$ and so that for each translation, 
\begin{equation}
\label{pendiente_hord_no_cerrado}
\nub_{{\mathfrak p}_{\alpha(\eta')}}(s_i)\geq \frac{{\nu_{{\mathfrak p}_{\alpha(\eta')}}(\widetilde{a}^{(i)}_{p^{\ell_i}}) }}{p^{\ell_i}}.
\end{equation}
We emphasize here that there is no need to consider translations with
$s_i\in S_{\mathfrak{p}_{\eta'}}$.
\medskip  

To conclude, considering  $\G_{B'}$ as before, recall that, 
  $\overline{\ord}_{\eta'}(\G_{B'})=\inf\left\{\frac{\nub_{\eta'} (J_n)}{n}: n\in {\mathbb N}\right\}$. 
Then, on the one hand,
$$\ord_{\alpha(\eta')}(\Gd) = \ord_{{\mathfrak p}_{\alpha(\eta')}}(\Gd).$$
On the other, since ${\mathfrak p}_{\alpha(\eta')}B'$ is a reduction of ${\mathfrak p}_{\eta'}$, and  $\Gd B'\subset \G_{B'}$ is a finite extension of Rees algebras,  by Proposition \ref{ExtFinNuBar}, and following similar arguments as in \cite[Proposition 0.20]{LejeuneTeissier1974},
$$\ord_{{\mathfrak p}_{\alpha(\eta')}}(\Gd) = \overline{\ord}_{{\mathfrak p}_{\eta'}}(\G'_B).$$
For similar reasons,
$$\ord_{\alpha(\eta')}(\Gd) = \overline{\ord}_{{\eta'}}(\G'_B).$$
Thus it follows that,
\begin{equation}
\label{orden_G_J_no_cerrado}
\overline{\ord}_{{\eta'}}(\G_{B'})=\ord_{\alpha(\eta')}(\Gd) = \ord_{{\mathfrak p}_{\alpha(\eta')}}(\Gd) = \overline{\ord}_{{\mathfrak p}_{\eta'}}(\G_{B'})=\min\left\{\frac{\nub_{{\mathfrak p}_{\eta'}} (J_n)}{n}: n\in {\mathbb N}\right\}.
\end{equation}
\end{parrafo}

\begin{parrafo}\label{suitable_presentations}
	{\bf  (III) Finding suitable algebraic presentations for $B'$} (for the extremal case).
	
\end{parrafo}

\noindent {\bf Closed points}

\begin{lemma} \label{base_ker} Let $B'=S[\theta_1,\ldots, \theta_e]$ be as in \S \ref{setting_prueba}, suppose that  the embedding dimension of $\xi'\in X'$ is $d+t$,  and  that $\xi'$ is in the extremal case.  Write  ${\mathfrak m}_{\alpha(\xi')}=  \langle y_1,\ldots, y_d\rangle$.  Then, after reordering the elements $\theta_i$  and  after considering translations of the form    $\theta_i'=\theta_i+s_i$ with $s_i\in S$,   it can be assumed that:
	\begin{itemize}
		\item[(i)]  $B'=S[\theta'_1,\ldots, \theta'_e]$, and
		\item[(ii)] $\{y_1,\ldots, y_d, \theta'_1, \ldots, \theta'_t\}$  is a minimal set of generators of ${\mathfrak m}_{\xi'}$ with $t\leq e$. 
	\end{itemize}
	Furthermore, 
	\begin{itemize} 
		\item[(iii)] For a given  a $\lambda_{\xi'}$-sequence,  $\{\delta_1,\ldots,\delta_t\}$,  after translating again the elements  $\theta'_i:=\theta_i+s_i$ for suitably chosen elements  $s_i\in S$,  we can assume that  $B'=S[\theta_1',\ldots, \theta_e']$, that
		$$\min \{\nub_{\xi'}(\theta'_i): i=1,\ldots, t, \ldots, e\}=\min\{\nub_{\xi'}(\theta'_i): i=1,\ldots, t\}\geq\min \{\nub_{\xi'}(\delta_i): i=1,\ldots, t\},$$
		and that $\{\theta_1',\ldots, \theta_t'\}$ is a $\lambda_{\xi'}$-sequence.
	\end{itemize}
	
\end{lemma}

\begin{proof} Recall that by \S \ref{p_presentations_suitable} (A),  maybe  after  translating    the $\theta_i$ by elements in $S$, it can be assumed that ${\mathfrak m}_{\xi'}= \langle y_1,\ldots, y_d, \theta_1, \ldots, \theta_e\rangle$ (here    we will identify $y_i$ with its image at $B'$).
Note that $\nub_{\xi'}(\theta_i)\geq 1$ for $i=1,\ldots,e$.
We can extract a minimal set of generators for ${\mathfrak m}_{\xi'}$ from the previous set,  and we can always assume that such a minimal set contains  $\{y_1,\ldots, y_d\}$ (see Proposition  \ref{suma_directa} (ii) and Remark \ref{RemNilpotent}).  After reordering the elements $\theta_i$, we can think  that such a minimal   set is of the form $\{y_1,\ldots, y_d, \theta_1,\ldots, \theta_t\}$. Thus conditions (i) and (ii) hold.
	
For condition (iii),
given a $\lambda_{\xi'}$-sequence, $\delta_1,\ldots,\delta_t$, by Proposition \ref{suma_directa} (ii),
we have that
$${\mathfrak m}_{\xi'}=\langle y_1,\ldots, y_{d}, \delta_1,\ldots, \delta_t\rangle,$$ 
and since $\theta_i\in {\mathfrak m}_{\xi'}$,
for   $i=1,\ldots,t$,  we can write,
$$\theta_{i}=p_{i,1}y_1+\ldots+p_{i,d}y_{d}+q_{i,1}\delta_1+\ldots+q_{i,t}\delta_{t},$$
where $p_{i,j}, q_{i,k}\in B'=S [\theta_1,\ldots, \theta_{t},\ldots, \theta_e]$ for $i=1\ldots, t$,  $j=1,\ldots,d$, and  $k=1,\ldots, t$.
For $i=1,\ldots, t$, and  $j=1,\ldots, d$, we can write
$$p_{i,j}=s_{i,j,0}+\sum_{i_1,\ldots,i_e}s_{i,j, i_1,\ldots, i_e}\theta_1^{i_1}\cdots\theta_e^{i_e},$$
with $s_{i,j,0}, s_{i,j, i_1,\ldots, i_e}\in S$ and $i_1+\ldots+i_e\geq 1$.
For $i=1,\ldots, t$, set
$$\theta_{i}':=\theta_{i}-s_{i,1,0}y_1-\ldots-s_{i,d,0}y_{d}.$$
Note that $B'=S[\theta_1',\ldots, \theta_t',\theta_{t+1},\ldots, \theta_e]$. In addition, since
$$\theta_{i}'=(p_{i,1}-s_{i,1,0})y_1+\ldots+(p_{i,d}-s_{i,d,0})y_d+q_{i,1}\delta_1+\ldots+q_{i,t}\delta_{t},$$
$\nub_{\xi'}((p_{i,j}-s_{i,j,0})y_j)\geq 2$ for $j=1,\ldots,d$, and    $\nub_{\xi'}(\delta_j)>1$ for $j=1,\ldots,t$, we have that
$\nub_{\zeta'}(\theta'_i)>1$ and that $\bar{\theta}'_i\in \ker(\lambda_{\zeta'})$.
Since
$$\langle y_1,\ldots, y_d,\theta_1,\ldots, \theta_t\rangle=\langle y_1,\ldots, y_d,\theta_1',\ldots, \theta_t'\rangle$$
it follows that
$\bar{\theta}'_1,\ldots,\bar{\theta}'_t\in {\mathfrak m}_{\xi'}/{\mathfrak m}_{\xi'}^2$ form a  basis of $\ker(\lambda_{\xi'})$.
Moreover by construction,
$$\nub_{\xi'}(\theta'_i)\geq\min\{1+\nub_{\xi'}(\theta_1),\ldots,1+\nub_{\xi'}(\theta_e),
	\nub_{\xi'}(\delta_1),\ldots,\nub_{\xi'}(\delta_t)\}.$$
Iterating this process we can assume that
$$\min\{\nub_{\xi'}(\theta'_i): i=1,\ldots, t\}\geq\min \{\nub_{\xi'}(\delta_i): i=1,\ldots, t\}.$$
	
Now suppose that there is some $j>t$ such that $\nub_{\xi'}(\theta_j)< \nub_{\xi'}(\theta'_i)$, for $i=1,\ldots, t$.
After reordering again, we can assume that $j=t+1$.
	
Repeating the previous argument,
$$\theta_{t+1}=p_1y_1+\ldots+p_dy_d+q_1\theta'_1+\ldots+q_{t}\theta'_{t},$$
where $p_i, q_j\in B'=S[\theta_1,\ldots, \theta_{t},\ldots, \theta_e]$ for $i=1\ldots, d$,  and $j=1,\ldots, t$.
	Now for $i=1,\ldots, d$, write
	$$p_i=s_{i,0}+\sum_{i_1,\ldots,i_e}s_{i, i_1,\ldots, i_e}\theta_1^{i_1}\cdots\theta_e^{i_e},$$
	with $s_{i,0}, s_{i, i_1,\ldots, i_e}\in S$ and $i_1+\ldots+i_e\geq 1$.
	Set
	$$\theta_{t+1}':=\theta_{t+1}-s_{1,0}y_1-\ldots-s_{d,0}y_d.$$
	Then
	$$\nub_{\xi'}(\theta_{t +1}')\geq \min\left\{\rule{0cm}{0.4cm} \nub_{\xi'}\left((p_1-s_{1,0})y_1+\ldots+ (p_d-s_{d,0})y_d\right), \nub_{\xi'}(q_1\theta'_1+\ldots+q_{t}\theta'_{t}) \right\}. $$
	Now,  it can be checked that either
	$$\nub_{\xi'}(\theta_{t+1}')\geq \min\{\nub_{\xi'}(\theta_i)+1: i=1,\ldots, e\}, $$
	or
	$$\nub_{\xi'}(\theta_{t+1}')\geq \min\{\nub_{\xi'}(\theta'_1),\ldots, \nub_{\xi'}(\theta'_{t})\}.$$
	Since  $B'=S[\theta'_1,\ldots, \theta'_{t},\theta_{t+1}',\theta_{t+2},\ldots,\theta_e]$, the claims in (iii)  follow   after a finite number of  translations  of the elements $\theta_i$ ($i=t+1,\ldots,e$) by elements in $S$. \end{proof}

\noindent {\bf Non-closed points} 

\medskip

\noindent To find suitable presentations of $B'$ that help us computing the Samuel slope at non-closed points, first we need a technical result, Lemma \ref{lucky_lemma} below. Then, a similar argument as the one exhibited in the proof of Lemma \ref{base_ker} will lead us to a similar statement (see Remark \ref{remark_lema_lema}). 

\begin{lemma}
	\label{lucky_lemma}
	Let $B'=S[\theta_1,\ldots, \theta_e]$ be as in \S \ref{setting_prueba}. Let $\xi'$ be the closed point of $\Spec(B')$ with multiplicity $m$, and  assume that $\eta'$ is a point of multiplicity $m$ defining  a regular subscheme in $\Spec(B')$. If $${\mathfrak m}_{\xi'}={\mathfrak m}_{\alpha(\xi')}B'+\langle \gamma_1,\ldots, \gamma_s\rangle$$
	with $\gamma_i\in {\mathfrak p}_{\eta'}$ for $i=1,\ldots, s$,         then
	$${\mathfrak p}_{\eta'}={\mathfrak p}_{\alpha(\eta')}B'+\langle \gamma_1,\ldots, \gamma_s\rangle.$$
\end{lemma}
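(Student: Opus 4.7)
Set $\mathfrak{q}:=\mathfrak{p}_{\alpha(\eta')}B'+\langle \gamma_1,\ldots,\gamma_s\rangle$. Since $\mathfrak{p}_{\alpha(\eta')}=\mathfrak{p}_{\eta'}\cap S\subset \mathfrak{p}_{\eta'}$ and each $\gamma_i\in \mathfrak{p}_{\eta'}$, the inclusion $\mathfrak{q}\subset \mathfrak{p}_{\eta'}$ is immediate; all the work is in proving the reverse inclusion. Write $d-r:=\dim(B'/\mathfrak{p}_{\eta'})$. By the discussion in \S\ref{setting_prueba}(C), the prime $\mathfrak{p}_{\alpha(\eta')}$ defines a regular $(d-r)$-dimensional subscheme of $\Spec(S)$, and $\mathfrak{p}_{\alpha(\eta')}B'$ has radical $\mathfrak{p}_{\eta'}$.

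My plan, modelled on the dimension argument of Step~1 in the proof of Proposition~\ref{prop_sucesiones}, is to show that the local Noetherian ring $B'/\mathfrak{q}$ is regular of Krull dimension $d-r$, hence a domain. Modulo $\mathfrak{q}$ each $\gamma_i$ vanishes, so the hypothesis $\mathfrak{m}_{\xi'}=\mathfrak{m}_{\alpha(\xi')}B'+\langle \gamma_1,\ldots,\gamma_s\rangle$ yields
$$\mathfrak{m}_{\xi'}/\mathfrak{q}=\mathfrak{m}_{\alpha(\xi')}\cdot (B'/\mathfrak{q}).$$
Since $\mathfrak{p}_{\alpha(\eta')}B'\subset \mathfrak{q}$, the finite ring map $S\to B'/\mathfrak{q}$ factors through the regular local ring $S/\mathfrak{p}_{\alpha(\eta')}$ of dimension $d-r$, whose maximal ideal $\mathfrak{m}_{\alpha(\xi')}/\mathfrak{p}_{\alpha(\eta')}$ needs only $d-r$ generators. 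Consequently the embedding dimension of $B'/\mathfrak{q}$ is at most $d-r$, and in particular
$$\dim(B'/\mathfrak{q})\leq d-r.$$
The reverse inequality follows from the surjection $B'/\mathfrak{q}\twoheadrightarrow B'/\mathfrak{p}_{\eta'}$, so both bounds are equalities and $B'/\mathfrak{q}$ is a regular local ring of dimension $d-r$, hence an integral domain.

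To finish, observe that $\mathfrak{p}_{\eta'}/\mathfrak{q}$ is a prime ideal of the domain $B'/\mathfrak{q}$ whose quotient $B'/\mathfrak{p}_{\eta'}$ has Krull dimension $d-r=\dim(B'/\mathfrak{q})$. In a Noetherian local domain, a prime with such a quotient must be a minimal prime, and the only minimal prime is $(0)$; hence $\mathfrak{p}_{\eta'}/\mathfrak{q}=0$, giving the required equality $\mathfrak{p}_{\eta'}=\mathfrak{q}$. The main delicate point is the upper bound on the embedding dimension of $B'/\mathfrak{q}$: it combines the reduction property (iii') of \S\ref{setting_prueba}, which ensures $\mathfrak{p}_{\alpha(\eta')}B'$ sits inside $\mathfrak{q}$ with the right radical, with the hypothesis that the $\gamma_i$ precisely complete the generators coming from $\mathfrak{m}_{\alpha(\xi')}$ to the full maximal ideal $\mathfrak{m}_{\xi'}$.
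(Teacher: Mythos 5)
Your proof is correct and takes essentially the same route as the paper's: both form the ideal $\mathfrak{q}=\mathfrak{p}_{\alpha(\eta')}B'+\langle\gamma_1,\ldots,\gamma_s\rangle$ (the paper writes it as $\langle y_1,\ldots,y_r,\gamma_1,\ldots,\gamma_s\rangle$ after choosing a regular system of parameters $y_1,\ldots,y_d$ at $\alpha(\xi')$ adapted to $\mathfrak{p}_{\alpha(\eta')}$), sandwich $\dim(B'/\mathfrak{q})$ between $d-r$ from below (surjection onto $B'/\mathfrak{p}_{\eta'}$) and $d-r$ from above (the maximal ideal of $B'/\mathfrak{q}$ needs only $d-r$ generators), conclude that $B'/\mathfrak{q}$ is a regular local domain of dimension $d-r$, and then force $\mathfrak{p}_{\eta'}/\mathfrak{q}=(0)$ by a height/dimension count. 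The only difference is cosmetic: you phrase the embedding-dimension bound via the factorization $S\to S/\mathfrak{p}_{\alpha(\eta')}\to B'/\mathfrak{q}$ rather than via an explicit choice of $y_1,\ldots,y_d$, and you make explicit the final "minimal prime of a domain" step that the paper leaves implicit.
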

\begin{proof}
	By the assumptions, there is a regular system of parameters at $S$, $y_1,\ldots, y_d$, such that ${\mathfrak p}_{\alpha(\eta')}=\langle y_1,\ldots, y_r\rangle$ for some $r<d$ and ${\mathfrak m}_{\alpha(\xi')}=\langle y_1,\ldots, y_d\rangle$. Since $S\to B'$ is an inclusion, we will identify $y_i$ with its image at $B'$.  
	We have that,
	\begin{equation}
		\label{cadena_incluiones}
		\langle y_1,\ldots, y_r,  \gamma_1,\ldots,  \gamma_s\rangle \subset
		{\mathfrak p}_{\eta'}.
	\end{equation}
	Let $\overline{B'}=B'/\langle y_1,\ldots, y_r,  \gamma_1,\ldots, \gamma_s\rangle$.
	Notice now that
	$$d-r=\dim(B'/\mathfrak{p}_{\eta'})\leq 
	\dim(\overline{B'}) \leq d-r,$$
	where the last inequality follows because
	$\mathfrak{m}_{\xi'}/\langle y_1,\ldots, y_r,  \gamma_1,\ldots, \gamma_s\rangle$
	can be generated by $d-r$ elements.
	Therefore $\overline{B'}$ is a regular local ring of dimension $d-r$ and the inclusion 
	(\ref{cadena_incluiones}) is an equality.
\end{proof}

\begin{remark} \label{remark_lema_lema}  With the same assumptions as in Lemma \ref{base_ker}, assume now that  $\eta'\in  X'$ is a point of multiplicity $m$ defining a regular closed subscheme at $\xi'$.  Let ${\mathfrak p}_{\eta'}\subset {\mathfrak m}_{\xi'}$ be the corresponding  prime,  and  suppose that 
	$${\mathfrak p}_{\eta'}={\mathfrak p}_{\alpha(\eta')}+\langle \gamma_1,\ldots, \gamma_s\rangle,$$
	for some $\gamma_1,\ldots, \gamma_s\in B'$. 
	Then, using a similar argument as the one given in the proof of Lemma \ref{base_ker} (iii),  it can be proven that,  after reordering the elements $\theta_i$,  and  after considering translations of the form    $\theta_i'=\theta_i+s_i$ with $s_i\in S$,   it can be assumed that 
	$B'=S[\theta_1',\ldots, \theta_e']$ and
	\begin{equation} \label{Ineq_lema_lema}
		\min\{\nub_{{\mathfrak p}_{\eta'}}(\theta'_i): i=1,\ldots, e\}\geq\min \{\nub_{{\mathfrak p}_{\eta'}}(\gamma_i): i=1,\ldots, s\}.
	\end{equation}	  
	 To see this it suffices to observe that since 
	${\mathfrak p}_{\eta'}$ defines a regular prime at ${\mathfrak m}_{\xi'}$,  after translating the elements $\theta_i$ if needed, we may assume that $\theta_i\in {\mathfrak p}_{\eta'}$ for $i=1,\ldots, e$ (see \S\ref{setting_prueba}~(C)).   Then we can select a regular system of parameters at $S$, $y_1,\ldots, y_r, y_{r+1},\ldots, y_d$,  so that
	$${\mathfrak p}_{\alpha(\eta')}=\langle y_1,\ldots, y_r\rangle.$$
	Now  for $i=1,\ldots, e$,
	$$\theta_{i}=p_{i,1}y_1+\ldots+p_{i,r}y_{r}+q_{i,1}\gamma_1+\ldots+
	q_{i,s}\gamma_{s},$$
	where $p_{i,j}, q_{i,k}\in B'=S [\theta_1,\ldots, \theta_{t},\ldots, \theta_e]$ for $i=1\ldots, e$,  $j=1,\ldots,r$, and  $k=1,\ldots, s$.
	For $i=1,\ldots, e$, and  $j=1,\ldots, r$, we can write
	$$p_{i,j}=s_{i,j,0}+\sum_{i_1,\ldots,i_e}s_{i,j, i_1,\ldots, i_e}\theta_1^{i_1}\cdots\theta_e^{i_e},$$
	with $s_{i,j,0}, s_{i,j, i_1,\ldots, i_e}\in S$ and $i_1+\ldots+i_e\geq 1$.
	Set
	$$\theta_{i}':=\theta_{i}-s_{i,1,0}y_1-\ldots-s_{i,d,0}y_{r}\in\mathfrak{p}_{\eta'} .$$
	Finally (\ref{Ineq_lema_lema}) follows using the same argument as in Lemma \ref{base_ker} (iii). 
\end{remark}

Now we are ready to address the proof of our main theorem: 

\begin{theorem} \label{maintheorem}
 	Let $X$ be an equidimensional variety of dimension $d$ defined over a perfect field $k$. Let $\zeta\in X$ be a point of multiplicity $m>1$. Then: 
 	\begin{itemize}
 		\item If ${\mathcal S}\text{-Sl}({\mathcal O}_{X,\zeta})=1$, then
 		$$1={\mathcal S}\text{-Sl}({\mathcal O}_{X,\zeta})=\Hord^{(d)}_X(\zeta)\leq \ord^{(d)}_X(\zeta).$$
 		In addition, if $\zeta$ is a closed point then also $\ord^{(d)}_X(\zeta)=1$.
 		\item If ${\mathcal S}\text{-Sl}({\mathcal O}_{X,\zeta})>1$, then    
 		$$\Hord^{(d)}_X(\zeta)=\min \{{\mathcal S}\text{-Sl}({\mathcal O}_{X,\zeta}), \ord^{(d)}_X (\zeta)\}.$$
 	\end{itemize}
\end{theorem} 
\begin{proof} {\bf Closed points}. 
Assume that $\zeta$ is a closed point and denote it  by $\xi\in X$. Let  $t=t_{\xi}$ be the excess of embedding dimension.
After an \'etale extension $(B', {\mathfrak m}_{\xi'}, k(\xi'))$ of   $({\mathcal O}_{X,\xi}, {\mathfrak m}_{\xi}, k(\xi))$    we can assume the setting and the notation described  in \S \ref{setting_prueba}, where  $B'=S[\theta_1,\ldots, \theta_e]$.  After translating the $\theta_i$ if needed, we  have that  
		\begin{equation}\label{generadores_maximal} 
		{\mathfrak m}_{\xi'}={\mathfrak m}_{\alpha(\xi')}+\langle \theta_1,\ldots.  \theta_e\rangle.
		\end{equation} 
Recall  that $\Gr_{\alpha(\xi')}(S)\to \Gr_{\xi'}(B')$ is a finite extension which induces an inclusion 
in degree one (see Proposition \ref{suma_directa} (ii)).
Therefore, any regular system of parameters generating ${\mathfrak m}_{\alpha(\xi')}$, $y_1,\ldots, y_d$, 
can be considered as part of a minimal set of generators of ${\mathfrak m}_{\xi'}$. Recall in addition that $\nub_{\xi'}(y_i)=1$, for $i=1,\ldots, d$. 

Continuing with the setting in \S\ref{setting_prueba},     the Rees algebra ${\mathcal G}^{(d+e)}$ is  weakly equivalent to the Rees algebra in (\ref{segunda_presentacion}), which in turn is a p-presentation of $\G^{(d+e)}$ (see   \S \ref{p_presentations_suitable} (A)). Since $h_i(x_i)W^{p^{\ell_i}}\in\mathcal{G}^{(d+e)}$, we have that $\nu_{\xi'}(h_i(x_i))\geq p^{\ell_i}$ in $V^{(d+e)}$, and hence 
$\bar{\nu}_{\xi'}(h_i(\theta_i))\geq p^{\ell_i}$ in $\Spec(B')$ for $i=1,\ldots,p^{\ell_i}$ (see (\ref{orden_G_J})).
Note here that if $h_i(\theta_i)=0\in B'$, then $\bar{\nu}_{\xi'}(h_i(\theta_i))=\infty$, but the arguments below go through even in this case.  

\medskip

\noindent{\bf Closed points in the non-extremal case.} 
If   $\dim_{k(\xi)}\ker(\lambda_{\xi})< t$,  then   $\dim_{k(\xi')}\ker(\lambda_{\xi'})< t$ 
(see Lemma  \ref{nucleo_etale}), and  hence, necessarily, $\nub_{\xi'}(\theta_i)=1$ for some $i\in \{1,\ldots, e\}$.   Without loss of generality we can assume that
$\nub_{\xi'}(\theta_1)=1,\ldots,\nub_{\xi'}(\theta_c)=1$ and
$\nub_{\xi'}(\theta_{c+1})>1,\ldots,\nub_{\xi'}(\theta_e)>1$ for some $c\in \{1,\ldots, e\}$.

Since the assumption is that $\nub_{\xi'}(\theta_i)=1$, for $i=1,\ldots,c$,  we will pay special attention to $h_i(\theta_i)W^{p^{\ell_i}}$ for $i=1,\ldots, c$. 
To start with, by Definition \ref{DefFuncOrd} and the properties in Proposition \ref{PotenciaNuBar}, we have that
		\begin{equation}\label{orden_suma_b}
		\bar{\nu}_{\xi'}(h_i(\theta_i)-\theta_i^{p^{\ell_i}})\geq 
		\min_{j=1,\ldots,p^{\ell_1}}\left\{\bar{\nu}_{\xi'}(\tilde{a}_j^{(i)}
		\theta_i^{p^{\ell_1}-j})\right\}\geq 
		\min_{j=1,\ldots,p^{\ell_i}}\left\{\bar{\nu}_{\xi'}(\tilde{a}_j^{(i)})+(p^{\ell_i}-j))\right\}.
		\end{equation}
Next,  we will distinguish different cases depending on the values $\nub_{\xi'}(\tilde{a}_j^{(i)})/j$.
Recall that $\bar{\nu}_{\xi'}(\tilde{a}_j^{(i)})=\nu_{\alpha(\xi')}(\tilde{a}_j^{(i)})\geq j$ for $j=1,\ldots, p^{\ell_i}$ and $i=1,\ldots, e$ (see Proposition~\ref{ExtFinNuBar}).
		
\

{\bf  {\em Case (a).}} There exists some $i\in\{1,\ldots,c\}$ such that $\bar{\nu}_{\xi'}(\tilde{a}_j^{(i)})>j$  for all $j=1,\ldots,p^{\ell_1}$. Then by    Remark  \ref{minimo_suma},  and by  (\ref{orden_suma_b}), for that index $i$, 
		\begin{equation*}
		\bar{\nu}_{\xi'}(h_i(\theta_i))=
		\min\left\{\bar{\nu}_{\xi'}(\theta_i^{p^{\ell_i}}),\bar{\nu}_{\xi'}(h_i(\theta_i)-\theta_i^{p^{\ell_i}})\right\}=
		p^{\ell_i}
		\end{equation*}
		from where it follows  that $\overline{\ord}_{\xi'}(\mathcal{G}_{B'})=\ord_{\alpha(\xi')}(\mathcal{G}^{(d)})=1$.
		Here we use that $h_i(\theta_i)W^{p^{\ell_i}}\in \G_{B'}$ and (\ref{orden_G_J}). 
		
		\

{\bf {\em Case (b).}} For each $i\in\{1,\ldots,c\}$ there exist some $j\in \{1,\ldots,p^{\ell_1}\}$ such that $\nub_{\xi'}(\tilde{a}_{j}^{(i)})=j$.
Here we distinguish two cases: 

{\bf {\em Case (b.1).}} If $j\in\{1,\ldots,p^{\ell_1}-1\}$,    then by Remark \ref{remark_ordenes_intermedios},    
		\begin{equation*}
		1=\min_{j=1,\ldots,p^{\ell_i}-1}\left\{
		\frac{\nu_{\alpha(\xi')}(\tilde{a}_{j}^{(i)})}{j}\right\}\geq  \ord_{\alpha(\xi')}(\mathcal{G}^{(d)})\geq 1,
		\end{equation*}
		hence $\ord_{\alpha(\xi')}(\mathcal{G}^{(d)})=1$.

{\bf {\em Case (b.2).}} Assume that  for all $i=1,\ldots,c$   we have $\nub_{\xi'}(\tilde{a}_j^{(i)})>j$ for  $j=1,\ldots,p^{\ell_1}-1$ and 
$\nub_{\xi'}(\tilde{a}_{p^{\ell_i}}^{(i)})=p^{\ell_i}$.
After replacing $\theta_i$ by $\theta_i+s_i$, for some $s_i\in S$, we may  assume  that the initial part of $\tilde{a}_{p^{\ell_i}}^{(i)}$ is not  a $p^{\ell_i}$-th power 
(here we consider  $\In_{\alpha(\xi')}(\tilde{a}_{p^{\ell_i}}^{(i)})=H(Y_1,\ldots,Y_d)\in\Gr_{\alpha(\xi')}(S)$ as a homogeneous polynomial of degree $p^{\ell_i}$, see \S\ref{cleaning} and Definition~\ref{DefHiperNormalForm}).
Note that the elimination algebra is invariant by the change $\theta_i\to\theta_i+s_i$ (see Example~\ref{Emininacion_Hiper}).
Observe that that now $\nub_{\xi'}(\theta_i+s_i)\geq 1$ but from our hypothesis there must be at least one $\theta_i+s_i$ such that
$\nub_{\xi'}(\theta_i+s_i)=1$.
Setting $\theta'_i=\theta_i+s_i$ after relabeling if needed we can assume $\nub_{\xi'}(\theta'_1)=\cdots=\nub_{\xi'}(\theta'_{c'})=1$,
for some $c'\leq c$.
If some $\theta'_i$ falls into cases (a) or (b.1) we are done, and $\ord^{(d)_{X}}(\xi)=1$.

Otherwise if all $\theta'_i$, $i=1,\ldots,c'$, with $c'\geq 1$ are in case (b.2), then
it follows that $\Hord_X^{(d)}(\xi')=1$.
In such case, moreover, since $\xi$ is a closed point and
the initial part of $\tilde{a}_{p^{\ell_1}}^{(1)}$ has some term  which is not an $p^{\ell_1}$-th power,
there is a differential operator $D$ in $S$ of order $b<p^{\ell_1}$ such that $\nu_{\alpha(\xi')}(D(\tilde{a}_{p^{\ell_1}}^{(1)}))=p^{\ell_1}-b$.
Now,  $D$ is also  a differential operator in $S[x_1,\ldots,x_e]$, thus      we have that
$D(h_1(x_1))W^{p^{\ell_1-b}}\in\mathcal{G}^{(d+e)}$, since $\mathcal{G}^{(d+e)}$ is differentially saturated. Finally, observe that
		\begin{equation*}
		D(h_1(x_1))=D(\tilde{a}_{p^{\ell_1}}^{(1)})+\tilde{\tilde{a}}_1x_1^{p^{\ell_1}-1}+\cdots +\tilde{\tilde{a}}_{p^{\ell_1}-1}x_1.
		\end{equation*}
		Using the same argument as in the proof of Theorem 4.4 in \cite{BVComp} (page 1286) it follows that the norm
		of $D(h_1(x_1))$ is  an element of order one in $\mathcal{G}^{(d)}$,  
		and hence $\ord_{\alpha(\xi')}(\mathcal{G}^{(d)})=1$.
		
		To conclude, for all the cases $\ord_{\alpha(\xi')}(\mathcal{G}^{(d)})=1$, and by Theorem \ref{ThBenVilla} and Remark \ref{presentacion_simp_mult},
		\begin{equation*}
		\min_{j=1,\ldots,p^{\ell_1}}\left\{
		\frac{\nu_{\alpha(\xi')}(\tilde{a}_j^{(1)})}{j}, \ord_{\alpha(\xi')}(\mathcal{G}^{(d)})
		\right\}=
		\min\left\{
		\frac{\nu_{\alpha(\xi')}(\tilde{a}_{p^{\ell_1}}^{(1)})}{p^{\ell_1}}, \ord_{\alpha(\xi')}(\mathcal{G}^{(d)})
		\right\}.
		\end{equation*}
		Hence $\Hord_{X}^{(d)}(\xi)=\Hord_{X'}^{(d)}(\xi')=\ord_{\alpha(\xi')}(\mathcal{G}^{(d)})=1$.
	
	\medskip	
		 
		\noindent{\bf Closed points in the extremal case.}  
		By Lemma \ref{base_ker}, we can assume that ${\mathfrak m}_{\xi'}={\mathfrak m}_{\alpha(\xi')}+\langle \theta_1,\ldots, \theta_t\rangle$
		with $t\leq e$, that  $$\min\{\nub_{\xi'}(\theta_i): i=1,\ldots, t\}=\min \{\nub_{\xi'}(\theta_i): i=1,\ldots, t, \ldots, e\},$$ and that $\nub_{\xi'}(\theta_i)>1$ for $i=1,\ldots, t$. Thus
		$\{{\theta_1},\ldots, {\theta_t}\}$ is $\lambda_{\xi}$-sequence.

		Recall that by Remark \ref{remark_ordenes_intermedios}, for every  $i\in {1,\ldots,e}$, and each $j=1,\ldots, {p^{\ell_i}}-1$,
		\begin{equation}
			\label{desigualdad_intermedia}
			\ord_{\alpha(\xi')}(\Gd) \leq \frac{\nu_{{\alpha(\xi')}}(\tilde{a}_j^{(i)})}{j},
		\end{equation}
		
		Since   $h_i(\theta_i)W^{p^{\ell_i}}\in \G_{B'}$, we have that
		\begin{equation}
			\label{desigualdad_orden}
			\overline{\ord}_{\xi'}(h_i(\theta_i)W^{p^{\ell_i}})\geq \overline{\ord}_{\xi'} ({\G}_{B'})=\ord_{\alpha(\xi')}\Gd,
		\end{equation}
		(see (\ref{orden_G_J})).
		We will   distinguish two cases:
		
		\
		
		{\bf {\em Case (a')}}  Suppose that $\nub_{{\xi'}}(\theta_i) \geq \ord_{\alpha(\xi)}\Gd$ for all $i\in \{1,\ldots, t\}$. Then
		\begin{equation}
			\label{comparacion}
			{\mathcal S}\text{-Sl}({\mathcal O}_{X',\xi'}) \geq \ord_{\alpha(\xi)}\Gd.
		\end{equation}
		In addition, for $i=1,\ldots,t,\ldots, e$ we have also
		$\nub_{{\xi'}}(\theta_i) \geq \ord_{\alpha(\xi)}\Gd$, and  by (\ref{desigualdad_intermedia}),
		$$\frac{\nub_{{\xi'}}\left(\theta_i^{p^{\ell_i}}+\tilde{a}_{1}^{(i)}\theta_i^{p^{\ell_i}-1}+\ldots+\tilde{a}_{p^{\ell_i}-1}^{(i)}\theta_i\right)} {p^{\ell_i}} \geq \ord_{\alpha(\xi')}\Gd, $$
		for  $i=1,\ldots, e$. As a consequence, by (\ref{desigualdad_orden}) and Remark \ref{minimo_suma},
		$$\frac{\nub_{{\xi'}}(\tilde{a}_{p^{\ell_i}}^{(i)})}{p^{\ell_i}}=\frac{\nu_{\alpha({\xi'})}(\tilde{a}_{p^{\ell_i}}^{(i)})}{p^{\ell_i}}\geq \ord_{\alpha(\xi')}\Gd.$$
		Therefore,
		$$Sl(\mathcal{P})(\xi')
		=\min\left\lbrace \frac{\nu_{\alpha(\xi')}(\tilde{a}_{p^{\ell_i}}^{(i)})}{p^{\ell_i}}, \ord_{\alpha(\xi')}(\Gd) \right\rbrace= \ord_{\alpha(\xi')}(\Gd)=\Hord_{X'}^{(d)}(\xi').$$
		Thus, by (\ref{comparacion}),
		$$\Hord^{(d)}_{X'}(\xi')=\min \{{\mathcal S}\text{-Sl}({\mathcal O}_{X',\xi'}), \ord^{(d)}_X (\xi')\}.$$

		\
		
		{\bf {\em Case (b')}} Suppose that $\nub_{{\xi'}}(\theta_i) < \ord_{\alpha(\xi')}\Gd$ for some $i\in \{1,\ldots,t\}$.
		We will  prove  that in this case:
		\begin{equation}\label{igualdad_d_e}
			\min_{i=1,\ldots,t}\left\{\nub_{{\xi'}}(\theta_i), \ord_{\alpha(\xi')}(\Gd)\right\} =Sl(\mathcal{P})(\xi')
			=\min_{i=1,\ldots,e}\left\lbrace \frac{\nu_{\alpha(\xi')}(\tilde{a}_{p^{\ell_i}}^{(i)})}{p^{\ell_i}}, \ord_{\alpha(\xi')}(\Gd) \right\rbrace.
		\end{equation}
		By (\ref{desigualdad_orden}) and Remark \ref{minimo_suma},   either $\nub_{\xi'}(\theta_i^{p^{\ell_i}})=\nub_{\xi'} (\tilde{a}_j^{(i)}\theta_i^{p^{\ell_i}-j})$ for some $j\in \{1,\ldots, p^{\ell_i}-1\}$, or else $\nub_{\xi'}(\theta_i^{p^{\ell_i}})=\nub_{\xi'} (\tilde{a}_{p^{\ell_i}}^{(i)})$. In the first case, we would have that $\nub_{\xi'} (\theta_i^{p^{\ell_i}})=\nub_{\xi'} \left(\tilde{a}_j^{(i)}\theta_i^{p^{\ell_i}-j}\right)$
		which by Proposition \ref{ExtFinNuBar}
		implies that
		$$p^{\ell_i}\nub_{\xi'}(\theta_i)=\nub_{\xi'}(\tilde{a}_{j}^{(i)})+(p^{\ell_i}-j)\nub_{\xi'}(\theta_i),$$
		and therefore, $\nub_{\xi'}(\theta_i)= \nub_{\xi'}(\tilde{a}_{j}^{(i)})/j= \nu_{\alpha(\xi')}(\tilde{a}_{j}^{(i)})/j\geq \ord_{\alpha(\xi')} \Gd$ (by Remark \ref{remark_ordenes_intermedios}) which is a contradiction.
		Thus, necessarily, $\nub_{\xi'}(\theta_i)=\nub_{\xi'} (\tilde{a}_{p^{\ell_i}}^{(i)})/{p^{\ell_i}}=\nu_{\alpha(\xi')} (\tilde{a}_{p^{\ell_i}}^{(i)})/{p^{\ell_i}}< \ord_{\alpha(\xi')}\Gd$   (since by assumption $\nub_{{\xi'}}(\theta_i) < \ord_{\alpha(\xi')}\Gd$). 
		
		Conversely, if for some $i=1,\ldots,e$, ${\nu_{\alpha(\xi')}(\tilde{a}_{p^{\ell_i}}^{(i)})}/{p^{\ell_i}}<\ord_{\alpha(\xi')}(\Gd)$, then this leads
		to $\nub_{\xi'}(\theta_i)= \nub_{\xi'}(\tilde{a}_{p^{\ell_i}}^{(i)})/p^{\ell_i}= \nu_{\alpha(\xi')}(\tilde{a}_{p^{\ell_i}}^{(i)})/p^{\ell_i}$.     
		Hence equality (\ref{igualdad_d_e}) holds.
		\medskip
		
		Now we check that  the theorem follows from here for $\xi'\in X'$.   On the one hand, by Lemma \ref{base_ker}, for each $\lambda_{\xi'}$-sequence, $\delta_1,\ldots, \delta_t$, we can find suitable elements  $\theta_1,\ldots, \theta_t, \ldots, \theta_e$, so that $B'=S[\theta_1,\ldots, \theta_e]$
		and
		$$\min_{i=1,\ldots,t}\{\nub_{\xi'}(\theta_i)\} \geq \min_{i=1,\ldots,t}\{\nub_{\xi'}(\delta_i)\},$$
		for which we either  fall in case (a'), or else we fall in  case (b') and then equality (\ref{igualdad_d_e}) holds.
		
		On the other hand, higher values of $Sl(\mathcal{P})(\xi')$ are only obtained in case (b') after translations on the coefficients $\tilde{a}_{p^{\ell_i}}^{(i)}$ by elements on $S$. These in turn induce changes of the form $\theta_i':=\theta_i+s_i$, with $s_i\in {\mathfrak m}_{\alpha(\xi')}$ and with the additional property pointed out in (\ref{pendiente_hord_cerrado}), thus  
		$\nub_{\xi'}(\theta_i')\geq \nub_{\xi'}(\theta_i)$ for $i=1,\ldots, e$. Observing that also $B'=S[\theta_1',\ldots,\theta_e']$,  again by Lemma \ref{base_ker}, we can extract a $\lambda_{\xi'}$-sequence among $\theta_1',\ldots, \theta_e'$ for which we either fall in case (a') or else we fall in case (b') and equality (\ref{igualdad_d_e}) holds.
		\medskip
		
To conclude, to check that the theorem    holds at $({\mathcal O}_{X,\xi}, {\mathfrak m}, k(\xi))$ it suffices to observe that by Proposition \ref{Caso_tau_e_reformulado_b} and Remark \ref{lema_lema}, $\text{Gr}_{{\mathfrak m}_{\xi}}({\mathcal O}_{X,\xi})=\text{Gr}_{{\mathfrak m'}}(B')$. Therefore, the theorem  follows from Proposition \ref{Prop_Slope_etale}.

\

\noindent{\bf Non-closed points}. Let $\zeta=\eta\in X$ be a non-closed point of multiplicity $m\geq 1$.
Denote by $\mathfrak{p}_\eta$ the prime defined by $\eta$ in some affine open set $U\subset X$.
Choose a closed point 
\begin{equation}
	\label{non_closed_conditions} 
	\xi\in\overline{\{\eta\}}\subseteq X
\end{equation}
with the following conditions: 
\begin{enumerate}
		\item $\xi$ and $\eta$ has the same multiplicity $m$.
			\item $\mathcal{O}_{X,\xi}/\mathfrak{p}_{\eta}$ is a regular local ring of dimension $d-r$ for $r\geq 1$.
		\end{enumerate}
Let $B=\mathcal{O}_{X,\xi}$, and let $B\longrightarrow B'$ an \'etale extension, and $S\longrightarrow B'$ a finite morphism as in \S\ref{setting_prueba}. Denote by and $\mathfrak{p}_{\eta'}$ the prime dominating $\mathfrak{p}_\eta$, $\xi'$ the closed point dominating $\xi$, and $\p_{\alpha(\eta')}$ the prime $\p_{\eta'}\cap S$. By \cite[Corollary 3.2]{COA}, $\p_{\alpha(\eta')}$ determines a regular prime.  Under these assumptions, using \cite[Lemma 3.6]{COA}, we can assume that $B'=S[\theta_1,\ldots,\theta_e]$ with $\theta_i\in \p_{\eta'}$  (see \S \ref{p_presentations_suitable} (C)). Note that $\nub_{\p_{\eta'}}(\theta_i)\geq 1$  and $\nub_{{\xi'}}(\theta_i)\geq 1$, for $i=1,\ldots,e$. Since $\theta_i\in \p_{\eta'}$, it follows that $\p_{\eta'}=\p_{\alpha(\eta')}+\langle \theta_1,\ldots,\theta_e\rangle$.

 \medskip

\noindent {\bf Non-closed points in the non-extremal case.} 
If  $\eta'$ is not in the extremal case, necessarily $\nub_{\mathfrak{p}_{\eta'}}(\theta_i)=1$ for some $i$.
After reordering we may assume that
$\nub_{{\eta'}}(\theta_1)=1,\ldots,\nub_{{\eta'}}(\theta_c)=1$ and
$\nub_{{\eta'}}(\theta_{c+1})>1,\ldots,\nub_{{\eta'}}(\theta_e)>1$.
Note that, in particular, $\nub_{\mathfrak{p}_{\eta'}}(\theta_i)=1$ for $i=1,\ldots,c$.

Now using the fact that 
$$\nub_{\eta'}(\tilde{a}^{(i)}_j)=\nu_{\alpha(\eta')}(\tilde{a}^{(i)}_j)=\nu_{\mathfrak{p}_{\alpha(\eta')}}(\tilde{a}^{(i)}_j)=
\nub_{\mathfrak{p}_{\eta'}}(\tilde{a}^{(i)}_j)$$
cases (a) and (b.1) follow using the same argument as in the closed point case.
Observe that in case (b.2)  if $\nub_{\mathfrak{p}_{\eta'}}(\tilde{a}_{p^{\ell_i}}^{(i)})=p^{\ell_i}$,
after replacing $\theta_i$ by $\theta_i+s_i$, for some $s_i\in S$, we may  assume  that the initial part of $\tilde{a}_{p^{\ell_i}}^{(i)}$ is not  a $p^{\ell_i}$-th power 
(here we consider  $\In_{\mathfrak{p}_{\alpha(\eta')}}(\tilde{a}_{p^{\ell_i}}^{(i)})=H(Y_1,\ldots,Y_r)\in\Gr_{\mathfrak{p}_{\alpha(\eta')}}(S)$ as a homogeneous polynomial of degree $p^{\ell_i}$).
Here there is no need to localize as it is shown in the proof of \cite[Proposition 5.8]{BVComp}.

After the translations $\theta_i+s_i$ we may fall into cases (a), (b.1) or (b.2).
From here it follows that $\Hord^{(d)}_X(\eta)=1$.

\

\noindent {\bf Non-closed points in the extremal case.}  Here  we can  repeat the arguments in cases (a') or  (b')   for $B'_{{\mathfrak p}_{\eta'}}=S_{{\mathfrak p}_{\alpha(\eta')}}[\theta_1,\ldots,\theta_e]$ where the arguments are valid for a local ring  (see \ref{p_presentations_suitable} (B)). Thus:
\begin{equation}
	\label{igualdad_h_puntos_no_cerrados}
	\Hord^{(d)}_{X'}(\eta')= \min \{\SSl({\mathcal O}_{X',\eta'}), \ord^{(d)}_{X'} (\eta')\}.
\end{equation}
We have that  $\SSl({\mathcal O}_{X,\eta})\leq\SSl({\mathcal O}_{X',\eta'})$. To
prove the theorem for $\eta\in X$ we will want to use Proposition \ref{prop_sucesiones}. But to do so, among other things, we need to show that
there  is some  $\lambda_{\eta'}$-sequence in $B'$ (without localizing at ${\mathfrak p}_{\eta'}$),
that is also a $\lambda_{\xi'}$-sequence, $\gamma'_1,\ldots, \gamma'_{t_{\eta'}}\in {\mathfrak p}_{\eta'}$,   for which  the following equality holds: 
\begin{equation}
	\label{igualdad_h_puntos_no_cerrados_sin_localizar}
	\Hord^{(d)}_{X'}(\eta')= \min \{\nub_{\eta'}(\gamma'_1),\ldots, \nub_{\eta'}(\gamma'_{t_\eta'}), \ord^{(d)}_{X'} (\eta')\}.
\end{equation} 
From here the theorem will follow for $\calo_{X,\eta}$ because  
\begin{itemize}
	\item either $\Hord^{(d)}_{X}(\eta)=\Hord^{(d)}_{X'}(\eta')=\ord^{(d)}_{X'}(\eta')=\ord^{(d)}_{X}(\eta)$, and applying Proposition \ref{prop_sucesiones}
	to $\gamma'_1,\ldots, \gamma'_{t_{\eta'}}$ we would get that:
	$$\SSl({\mathcal O}_{X,\eta})\geq\min \{\nub_{\eta'}(\gamma'_1),\ldots, \nub_{\eta'}(\gamma'_{t_\eta'})\}\geq \ord^{(d)}_{X'}(\eta')=\ord^{(d)}_{X}(\eta);$$
	
	\item or  
	$\Hord^{(d)}_{X}(\eta)=\Hord^{(d)}_{X'}(\eta')=\SSl({\mathcal O}_{X',\eta'})$,  and,  again, by Proposition \ref{prop_sucesiones} applied to the same sequence we would get that: 
	$$\SSl({\mathcal O}_{X,\eta})=\SSl({\mathcal O}_{X',\eta'}).$$
\end{itemize}

\noindent To find $\gamma'_1,\ldots, \gamma'_{t_{\eta'}}\in {\mathfrak p}_{\eta'}\subset B'$,
with the previous properties, we will proceed as follows.
\medskip

Using the same arguments as in the proof of Proposition \ref{extremal_especial} below, the  closed point $\xi\in\overline{\{\eta\}}\subseteq X$ in (\ref{non_closed_conditions}) can be selected so that in addition to (1) and (2) it also satisfies
 the following condition:
\begin{enumerate}
	 
	\item[(3)] Both points, $\xi$ and $\eta$, are in the extremal case.
\end{enumerate}
Recall that under these conditions, we have that  
\begin{equation}
	\label{desigualdad_t}
	t_{\xi}\leq t_{\eta}.
\end{equation}

Also, following the same arguments  as in the proof of Proposition \ref{extremal_especial} below we can assume that  $\nub_{\p_{\eta'}}(\theta_i)> 1$   and hence that  $\nub_{{\xi'}}(\theta_i)> 1$ for $i=1,\ldots, e$ (see (\ref{mayor_1})). 
\medskip

Suppose first that   $\Hord_{X'}^{(d)}(\eta')=\ord_{X'}^{(d)}(\eta')$.  Since 
\begin{equation}
	\label{desigualdad_orden_n_c}
	\overline{\ord}_{{\mathfrak p}_{\eta'}}(h_i(\theta_i)W^{p^{\ell_i}})\geq \overline{\ord}_{{\mathfrak p}_{\eta'}}  ({\G}_{X'})=\ord_{\alpha(\eta')}\Gd,
\end{equation}
and by Remark \ref{remark_ordenes_intermedios}, 
$$
\ord_{{\mathfrak p}_{\alpha(\eta')}}(\Gd) \leq \frac{\nu_{{\mathfrak p}_{\alpha(\eta')}}(\tilde{a}_j^{(i)})}{j},
$$
for all $i=1,\ldots, e$ and $j=1,\ldots, p^{\ell_i}-1$, the hypothesis   $\nub_{\p_{\eta'}}(\theta_i)> 1$ for $i=1\ldots, e$,  implies    
$$\frac{\nu_{\mathfrak{p}_{\alpha(\eta')}}(\tilde{a}_{p^{\ell_i}}^{(i)})}{p^{\ell_i}} >1.$$

Now,  by the discussion in \S\ref{cleaning},   after a finite number of translations of the form $\theta_i'=\theta_i+s_i$ with $s_i\in S$ it can be assumed that  for $i=1,\ldots, e$, 
$$\frac{\nu_{\mathfrak{p}_{\alpha(\eta')}}(\tilde{a}_{p^{\ell_i}}^{(i)})}{p^{\ell_i}}\geq \ord_{X'}^{(d)}(\eta').$$

Recall  that for each   translations, $\theta_i'=\theta_i+s_i$, we have that 
$$\nu_{{\mathfrak p}_{\alpha(\eta')}}(s_i)\geq \frac{\nu_{\mathfrak{p}_{\alpha(\eta')}}(\tilde{a}_{p^{\ell_i}}^{(i)})}{p^{\ell_i}}>1 $$
(see (\ref{des_h_p}) and (\ref{pendiente_hord_no_cerrado}), which implies that, after a finite number of translations, we are in the following situation: $B'=S[\theta_1,\ldots, \theta_e]$, with 
\begin{equation}
	\nub_{{\mathfrak p}_{\eta'}}(\theta_i)\geq  \ord_{X'}^{(d)}(\eta') 
\end{equation}
and 
\begin{equation}
	\nub_{{\mathfrak p}_{\eta'}}(\theta_i)>1
\end{equation} 
for $i=1,\ldots, e$.
This already implies that $\SSl(\calo_{X',\eta'})\geq \ord_{X'}^{(d)}(\eta')$. 
Since 
${\mathfrak m}_{\xi'}={\mathfrak m}_{\alpha(\xi')}+\langle \theta_1,\ldots, \theta_e\rangle $, after relabeling, we can assume that $\theta_1,\ldots, \theta_{t_{\xi'}}$ form a $\lambda_{\xi'}$-sequence. Thus 
$${\mathfrak m}_{\xi'}={\mathfrak m}_{\alpha(\xi')}+\langle \theta_1,\ldots, \theta_{t_{\xi'}} \rangle,$$
and by Lemma \ref{lucky_lemma}, 
$${\mathfrak p}_{\eta'}= {\mathfrak m}_{\alpha(\eta')}+\langle \theta_1,\ldots, \theta_{t_{\xi'}}\rangle.$$
Hence $t_{\eta'}=t_{\xi'}$ and setting $t=t_{\eta'}$, we have that  $\theta_1,\ldots, \theta_{t}$ form also a $\lambda_{\eta'}$-sequence. Since in addition $\xi'$ is in the extremal case, we can assume that $k(\xi)=k(\xi')$ (see Remark \ref{lema_lema}). Finally, we can use Proposition \ref{prop_sucesiones}  (with  $\gamma_i'=\theta_i$ for $i=1,\ldots,t$) to  conclude that 
$$\Hord^{(d)}_X(\eta)=\ord^{(d)}_X (\eta)=\min \{{\mathcal S}\text{-Sl}({\mathcal O}_{X,\eta}), \ord^{(d)}_X (\eta)\}.$$
\medskip

Suppose now that  $\Hord_{X'}^{(d)}(\eta')<\ord_{X'}^{(d)}(\eta')$. Since $\eta'$ is in the extremal case, by Proposition \ref{extremal_especial} below, we only have to consider the case where
\begin{equation}\label{desigual86}
	1<\Hord_{X'}^{(d)}(\eta')<\ord_{X'}^{(d)}(\eta').
\end{equation}

As in \S \ref{setting_prueba}, we can assume that
\begin{equation}
	\label{elemento_en_algebra}
	h_i(\theta_i)W^{p^{\ell_i}}=(\theta_i^{p^{\ell_i}}+\tilde{a}_{1}^{(i)}\theta_i^{p^{\ell_i}-1}+\ldots+\tilde{a}_{p^{\ell_i}}^{(i)})W^{p^{\ell_i}}\in \G_{B'}.
\end{equation}
By (\ref{desigual86}), there must be some  indexes $i_1,\ldots,i_c$,  for which
$$1<\nub_{\mathfrak{p}_{\eta'}}(\theta_{i_j})=\frac{\nu_{\mathfrak{p}_{\eta'}}(\tilde{a}_{p^{\ell_{i_j}}}^{(i_j)})}{p^{\ell_{i_j}}}\leq \Hord^{(d)}_{X'}(\eta').$$
If the second inequality is strict for all $i_1,\ldots,i_c$, then we can make changes  of variables of the form $\theta_{i_j}'=\theta_{i_j}+s_{i_j}$ with $s_{i_j}\in S$ and 
$$\nu_{{\mathfrak p}_{\alpha(\eta')}} (s_{i_j})\geq \frac{\nu_{\mathfrak{p}_{\eta'}}(\tilde{a}_{p^{\ell_{i_j}}}^{(i_j)})}{p^{\ell_{i_j}}}$$
(see (\ref{pendiente_hord_no_cerrado})),
such that for some index, which we can assume to be $e$,
$$\nub_{\mathfrak{p}_{\eta'}}(\theta'_e)=\Hord^{(d)}_{X'}(\eta')\leq \nub_{\mathfrak{p}_{\eta'}}(\theta'_i),$$
for $i=1,\ldots,e-1$.  Notice that from the way the changes are made, $\nub_{\xi'}(\theta_i')\geq \nub_{\xi}(\theta_i)>1$ and    $B'=S[\theta_1',\ldots, \theta_e']$
(here there is no need to localize as it is shown in the proof of \cite[Proposition 5.8]{BVComp}; see also \S \ref{cleaning}).

To summarize,
there is   a presentation of $B'$, $B'=S[\theta_1',\ldots, \theta_e']$, such that $\nub_{{\mathfrak p}_{\eta'}}(\theta_i')>1$ for $i=1,\ldots, e$  (thus $\nub_{{\xi'}}(\theta_i)>1$  for $i=1,\ldots, e$),   and so that
$$\nub_{{\mathfrak p}_{\eta'}}(\theta_e)=Sl(\mathcal{P})(\eta')= \Hord^{(d)}_{X'}(\eta').$$

\medskip

Now recall that  $\m_{\xi'}=\m_{\alpha(\xi')}+\langle \theta_1',\ldots,\theta_e'\rangle$.  We claim that we can select $t_{\xi'}$ elements among $\theta_1',\ldots, \theta_e'$ so that the order of at least one of them at ${\mathfrak p}_{\eta'}$ equals $\nub_{{\mathfrak p}_{\eta'}}(\theta_e')$. The claim follows immediately if $\theta_e'\in\m_{\xi'}\setminus \m_{\xi'}^2$. Otherwise,   we can assume, without loss of generality, that
the classes of      $\theta_1',\ldots, \theta'_{t_{\xi'}}$    are linearly independent    at $\m_{\xi'}/ \m_{\xi'}^2$
and that $\nub_{{\mathfrak p}_{\eta'}}(\theta_i')>\nub_{{\mathfrak p}_{\eta'}}(\theta_e')$ for $i=1,\ldots, t_{\xi'}$.
Then  we can  replace $\theta_1'$ by $\theta_1'+\theta'_e$, so $\m_{\xi'}=\m_{\alpha(\xi')}+\langle \theta'_1,\ldots,\theta'_{t_{\xi'}}\rangle$,
$\theta'_1,\ldots,\theta'_{t_{\xi'}}$ form  a $\lambda_{\xi'}$-sequence and $\nub_{{\mathfrak p}_{\eta'}}(\theta_1')=\nub_{{\mathfrak p}_{\eta'}}(\theta_e')$. By Lemma \ref{lucky_lemma}, ${\mathfrak p}_{\eta'}={\mathfrak p}_{\alpha(\eta')}+\langle \theta_1',\ldots,\theta'_{t_{\xi'}}\rangle$, thus  $t_{\xi'}\geq t_{\eta'}$, hence  by (\ref{desigualdad_t}),
$t_{\xi'}=t_{\eta'}$,   we have that  $\theta_1',\ldots, \theta_{t_{\xi'}}'$     is also  a $\lambda_{\eta'}$-sequence and setting $t:={t_{\eta'}}$,  by construction
\begin{equation}
	\label{igualdad_d_e_no_cerrado_bis}
	\min\{\nub_{{{\mathfrak p}_{\eta'}}}(\theta_1'), \ldots, \nub_{{\mathfrak p}_{\eta'}}(\theta'_t)\}=\Hord^{(d)}_{X'}(\eta').
\end{equation}
Note that, in general, $\nub_{{\mathfrak p}_{\eta'}}(\theta_i)\leq \nub_{\eta'}(\theta_i)$.
If these inequalities were strict for all $i=1,\ldots,t$ then we would have found a $\lambda_{\eta'}$-sequence for which 
$$\min\{\nub_{{{\eta'}}}(\theta_1'), \ldots, \nub_{{\eta'}}(\theta'_t)\}>\Hord^{(d)}_{X'}(\eta'),$$
and since $\Hord^{(d)}_{X'}(\eta')<\ord^{(d)}_{X'}(\eta')$ and we already know that the theorem holds for $B_{{\mathfrak p}_{\eta'}}$, we would get a contradiction.
Hence, there must be some index $i$ for which $\nub_{\eta'}(\theta_i)=\Hord^{(d)}_{X'}(\eta')$.
Finally, since $\xi'$ is in the extremal case, we can also assume that $k(\xi')=k(\xi)$, and apply   Proposition \ref{prop_sucesiones} to $\gamma_i'=\theta_i'$ for $i=1,\ldots, t$, from where it follows that the theorem holds for $\eta\in X$.
\end{proof}

\begin{proposition} \label{extremal_especial}
Let $X$ be an equidimensional variety of dimension $d$ defined over a perfect field $k$. Let $\zeta\in X$ be a point of multiplicity $m>1$ which is in the extremal case. If $\Hord_X^{(d)}(\zeta)<\ord_X^{(d)}(\zeta)$, then
$$\Hord_X^{(d)}(\zeta)>1.$$
\end{proposition}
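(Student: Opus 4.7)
The strategy is to argue by contradiction: since $\Hord^{(d)}_X(\zeta)\geq 1$ at a point of multiplicity $m>1$, I suppose $\Hord^{(d)}_X(\zeta)=1$ and derive $\ord^{(d)}_X(\zeta)=1$, violating the hypothesis $\Hord<\ord$. First I reduce to the \'etale setting of \S\ref{setting_prueba}: take a closed point $\xi\in\overline{\{\zeta\}}$ of multiplicity $m$, an \'etale extension $B'=S[\theta_1,\ldots,\theta_e]$ finite over a smooth $k$-algebra $S$ of dimension $d$, and a point $\zeta'\in\Spec(B')$ lying over $\zeta$. Both $\Hord^{(d)}_X$ and $\ord^{(d)}_X$ are defined through such \'etale extensions, so it suffices to work at $\zeta'$. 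Using the extremal hypothesis together with Lemma \ref{base_ker} (when $\zeta$ is closed) or Remark \ref{remark_lema_lema} (for non-closed $\zeta$, where $\overline{\{\zeta'\}}$ is regular at $\xi'$ as in \S\ref{setting_prueba}(C)), I translate the $\theta_i$ by suitable elements of $S$ so that $\nub_{\zeta'}(\theta_i)>1$ for every $i=1,\ldots,e$, and additionally $\nub_{\mathfrak{p}_{\zeta'}}(\theta_i)>1$ in the non-closed case.

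Next I pick a $p$-presentation $\mathcal{P}(\beta,x_1,\ldots,x_e,h_1W^{p^{\ell_1}},\ldots,h_eW^{p^{\ell_e}})$ of $\mathcal{G}^{(d+e)}$ in normal form at $\zeta'$, so $Sl(\mathcal{P})(\zeta')=\Hord^{(d)}_{X'}(\zeta')$. Assuming $\Hord=1$, and noting that by hypothesis $\ord>1$, Theorem \ref{ThBenVilla2} forces $\nu_{\beta(\zeta')}(\tilde{a}^{(i_0)}_{p^{\ell_{i_0}}})/p^{\ell_{i_0}}=1$ for some index $i_0$, which I relabel as $i_0=1$; thus $\nu_{\beta(\zeta')}(\tilde{a}^{(1)}_{p^{\ell_1}})=p^{\ell_1}\in\mathbb{Z}$. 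The normal form requirement combined with $Sl<\ord$ rules out cases~(A) and~(B1) of Definition~\ref{DefHiperNormalForm}, leaving case~(B2): the initial form $\In_{\beta(\zeta')}(\tilde{a}^{(1)}_{p^{\ell_1}})$ is not a $p^{\ell_1}$-th power in $\Gr_{\beta(\zeta')}(S)$.

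To conclude I imitate the differential-operator argument of case~(b.2) in the proof of Theorem~\ref{maintheorem_1_b}. Condition~(B2) yields an operator $D\in\Diff_S^{b}$ with $1\leq b<p^{\ell_1}$ and $\nu_{\beta(\zeta')}(D(\tilde{a}^{(1)}_{p^{\ell_1}}))=p^{\ell_1}-b$. Extending $D$ to $R$ trivially on the $x_i$, differential saturation of $\mathcal{G}^{(d+e)}$ yields $D(h_1(x_1))W^{p^{\ell_1}-b}\in\mathcal{G}^{(d+e)}$, with
\[
D(h_1(x_1))=D(\tilde{a}^{(1)}_{p^{\ell_1}})+D(\tilde{a}^{(1)}_{p^{\ell_1}-1})x_1+\cdots+D(\tilde{a}^{(1)}_{1})x_1^{p^{\ell_1}-1}.
\]
From Remark~\ref{remark_ordenes_intermedios} and $\ord>1$ we get $\nu_{\beta(\zeta')}(\tilde{a}^{(1)}_j)>j$, hence $\nu_{\beta(\zeta')}(D(\tilde{a}^{(1)}_j))>j-b$, for $1\leq j<p^{\ell_1}$. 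The norm-of-$D(h_1)$ construction of \cite[Theorem~4.4]{BVComp} then produces an element of the elimination algebra $\mathcal{G}^{(d)}$ whose order at $\beta(\zeta')$ equals one, contradicting $\ord>1$. \textbf{The principal obstacle} is the non-closed point case: here the differential operator $D$ must reduce the order with respect to $\mathfrak{p}_{\beta(\zeta')}$ (not merely $\mathfrak{m}_{\beta(\xi')}$), and when the residue field at $\mathfrak{p}_{\beta(\zeta')}$ is imperfect the non-$p^{\ell_1}$-th-power property in $\Gr_{\beta(\zeta')}(S)$ does not, on its own, guarantee the existence of such a $D$. It is exactly here that the extremal hypothesis enters decisively: via Propositions~\ref{Prop_Slope_etale} and~\ref{prop_sucesiones} together with the sharpened translation given by Remark~\ref{remark_lema_lema}, one obtains $\nub_{\mathfrak{p}_{\zeta'}}(\theta_i)>1$ for all $i$, which furnishes enough structural control on the coefficients $\tilde{a}^{(1)}_{p^{\ell_1}}$ to reduce the non-closed computation to the closed-point argument outlined above.
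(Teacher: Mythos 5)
Your proposal runs into a genuine gap at the non-closed point, and you have partially flagged this yourself. Let me make precise where it breaks. Your plan is: assume $\Hord^{(d)}_{X}(\zeta)=1$, pass to a $p$-presentation in normal form, observe that some $\nu_{\beta(\zeta')}(\tilde{a}^{(i_0)}_{p^{\ell_{i_0}}})/p^{\ell_{i_0}}=1$, invoke case~(B2) of Definition~\ref{DefHiperNormalForm} to get a differential operator $D\in\Diff^b_S$ lowering the order, and hence conclude $\ord^{(d)}_X(\zeta)=1$, contradicting the hypothesis. This reproduces the argument of case~(b.2) in the proof of Theorem~\ref{maintheorem_1_b}, which is explicitly restricted there to \emph{closed} points: the existence of a differential operator that realizes the failure of $\In_{\beta(\zeta')}(\tilde{a}^{(1)}_{p^{\ell_1}})$ to be a $p^{\ell_1}$-th power as an actual drop in order relies on the residue field of $S$ at $\beta(\zeta')$ being perfect. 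At a non-closed point $\eta'$ the residue field $k(\mathfrak{p}_{\alpha(\eta')})$ need not be perfect, and the example following the proof of Theorem~\ref{maintheorem_1_b} in the paper (the surface $x^p - y_1^p y_2$ at the non-closed point $\langle x,y_1\rangle$) shows that one can have a normal form with $\In(\tilde b_{p^{\ell}})$ not a $p^\ell$-th power and yet $\ord >1$. Your closing sentence, that the inequalities $\nub_{\mathfrak{p}_{\zeta'}}(\theta_i)>1$ ``reduce the non-closed computation to the closed-point argument'' via Propositions~\ref{Prop_Slope_etale} and~\ref{prop_sucesiones}, is not an argument: those propositions concern the behaviour of the \emph{Samuel slope} under \'etale extensions, not the relation between $\Hord$ and $\ord$, and they are not used by the paper in the proof of this proposition.

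The crucial missed observation, which is the entirety of the paper's proof, is that \emph{no normal forms and no differential operators are needed}. Once the $\theta_i$ have been translated, by Lemma~\ref{base_ker} in the closed case or by the method of Remark~\ref{remark_lema_lema} in the non-closed case, so that $\nub_{\zeta'}(\theta_i)>1$ (resp.\ $\nub_{\mathfrak{p}_{\eta'}}(\theta_i)>1$) for every $i$, the argument is direct. Since $Sl(\mathcal{P})(\zeta')\leq \Hord^{(d)}_{X'}(\zeta')<\ord_{\beta(\zeta')}(\Gd)$, there is some index $i$ with $\nu_{\beta(\zeta')}(\tilde{a}^{(i)}_{p^{\ell_i}})/p^{\ell_i}<\ord_{\beta(\zeta')}(\Gd)$. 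For any such $i$, the membership $h_i(\theta_i)W^{p^{\ell_i}}\in\G_{B'}$ together with equality~(\ref{orden_G_J}) (resp.\ (\ref{orden_G_J_no_cerrado})) forces $\nub_{\zeta'}(h_i(\theta_i))\geq p^{\ell_i}\ord_{\beta(\zeta')}(\Gd)$; combined with Remarks~\ref{minimo_suma} and~\ref{remark_ordenes_intermedios} and Proposition~\ref{ExtFinNuBar}, this yields $\nub_{\zeta'}(\theta_i)=\nu_{\beta(\zeta')}(\tilde{a}^{(i)}_{p^{\ell_i}})/p^{\ell_i}$. Since $\nub_{\zeta'}(\theta_i)>1$, this gives $\nu_{\beta(\zeta')}(\tilde{a}^{(i)}_{p^{\ell_i}})/p^{\ell_i}>1$ for those indices, while the remaining ones give values $\geq\ord>1$; hence $Sl(\mathcal{P})(\zeta')>1$ and so $\Hord^{(d)}_{X}(\zeta)\geq Sl(\mathcal{P})(\zeta')>1$. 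This works uniformly for closed and non-closed points and is where the extremal hypothesis genuinely enters --- it is precisely what makes the translation to $\nub(\theta_i)>1$ possible.
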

\begin{proof}
\noindent{\bf Closed points.} Suppose first that $\zeta=\xi$ is a closed point in $X$. Consider a suitable \'etale extension of $({\mathcal O}_{X,\xi}, {\mathfrak m}_{\xi}, k(\xi))$ as in
\S\ref{setting_prueba}, and work on $(B', {\mathfrak m}_{\xi'}, k(\xi'))$. Following the notation and results in \S \ref{setting_prueba} (A),  we can write $B'=S[\theta_1,\ldots, \theta_e]$ with $\theta_i\in {\mathfrak m}_{\xi'}$ for $i=1,\ldots, e$. And since $\xi'$ is in the extremal case, by Lemma  \ref{base_ker}, we can assume that $\nub_{\xi'}(\theta_i)>1$ for $i=1,\ldots, e$.

Recall that
\begin{equation}\label{recordar_slope}
     Sl(\mathcal{P})(\xi')
    =\min_{i=1,\ldots,e}\left\lbrace \frac{\nu_{\alpha(\xi')}(\tilde{a}_{p^{\ell_i}}^{(i)})}{p^{\ell_i}}, \ord_{\alpha(\xi')}(\Gd) \right\rbrace,
    \end{equation}
and that for every $i\in {1,\ldots,e}$ and each $j=1,\ldots, {p^{\ell_i}}-1$,
    \begin{equation}     \label{desigualdad_intermedia_b}
\Hord_{X'}^{(d)}(\xi') <  \ord_{\alpha(\xi')}(\Gd) \leq
    \frac{\nu_{{\alpha(\xi')}}(\tilde{a}_j^{(i)})}{j},
    \end{equation}
where the first inequality
follows  from  the hypothesis in the proposition, and the second from Remark \ref{remark_ordenes_intermedios}. Thus, there must be some  $i\in \{1,\ldots, e\}$ such that,
\begin{equation} \label{EqMenorCoeff_p}
\frac{\nu_{\alpha(\xi')}(\tilde{a}_{p^{\ell_i}}^{(i)})}{p^{\ell_i}}<\ord_{\alpha(\xi')}(\Gd).
\end{equation}
For every $i$ such that (\ref{EqMenorCoeff_p}) holds, since
  $h_i(\theta_i)W^{p^{\ell_i}}\in \G_{B'}$,
    \begin{equation}
    \label{desigualdad_orden_b}
\overline{\ord}_{\xi'}(h_i(\theta_i)W^{p^{\ell_i}})=\overline{\ord}_{\xi'}(\theta_i^{p^{\ell_i}}+\tilde{a}_{1}^{(i)}\theta_i^{p^{\ell_i}-1}+
    \ldots+\tilde{a}_{p^{\ell_i}}^{(i)})W^{p^{\ell_i}}\geq \overline{\ord}_{\xi'} ({\G}_{B'})=\ord_{\alpha(\xi')}\Gd,
    \end{equation}
    (see equality (\ref{orden_G_J})). Thus,  necessarily, for those indexes $i$,
    $$\nub_{\xi'}(\theta_i)= \frac{\nu_{\alpha(\xi')}(\tilde{a}_{p^{\ell_i}}^{(1)})}{p^{\ell_i}},$$
    and since $\nub_{\xi'}(\theta_i)>1$ the result follows from the definition of $\Hord_{X'}^{(d)}(\xi')=\Hord_{X}^{(d)}(\xi)$.
    \medskip

    \noindent {\bf Non-closed points.}   Let $\zeta=\eta\in X$ be a non-closed point  of multiplicity $m\geq 1$. Denote by $\mathfrak{p}_\eta$ the prime defined by $\eta$ in some affine open set of $U\subset X$.
     Choose a closed point $\xi\in\overline{\{\eta\}}\subseteq X$ with the following conditions:
     \begin{enumerate}
         \item $\xi$ and $\eta$ have the same multiplicity $m$.
         \item $\mathcal{O}_{X,\xi}/\mathfrak{p}_{\eta}$ is a regular local ring of dimension $d-r$ for some $r\geq 1$.
         \item Both points, $\xi$ and $\eta$, are in the extremal case.
     \end{enumerate}

     Conditions (1) and (2) hold in some open affine set $U\subset X$ containing $\eta$.
     To see that condition (3) can be achieved, choose a minimal set of generators $z_1,\ldots,z_r,\gamma_1,\ldots,\gamma_{t_{\eta}}\in\calo_{X,\eta}$
     of $\mathfrak{p}_{\eta}\calo_{X,\eta}$ with $\nub_{\eta}(\gamma_i)>1$, for $i=1,\ldots,t_{\eta}$.
     Notice that after shrinking $U$, if needed, we can assume that $\mathfrak{p}_{\eta}=\langle z_1,\ldots,z_r,\gamma_1,\ldots,\gamma_{t_{\eta}} \rangle$ on $U$,
     and that for any closed point $\xi\in U\cap \overline{\{\eta\}}$, $\nub_{\xi}(\gamma_i)>1$ for $i=1,\ldots,t_{\eta}$.

     Let  $\xi\in U\cap \overline{\{\eta\}}$ be a closed point. Since condition (2) holds, we can find $z_{r+1},\ldots,z_d\in\mathfrak{m}_{\xi}$ such that
     $\mathfrak{m}_{\xi}=\langle z_1,\ldots,z_d,\gamma_1,\ldots,\gamma_{t_{\eta}} \rangle$ with $\nub_{\xi}(z_i)=1$, for $i=1,\ldots,d$. Since
     $\nub_{\xi}(\gamma_i)>1$ for $i=1,\ldots, t_{\eta}$,  (3) holds. In particular if $\overline{\gamma_i}$ denotes the class of $\gamma_i$ in ${\mathfrak m}_{\xi}/{\mathfrak m}_{\xi}^2$, then
     \begin{equation}
     \label{generan_kernel}
     \ker(\lambda_{\xi})=\langle \overline{\gamma_1},\ldots,\overline{\gamma_{t_{\eta}}}\rangle
     \end{equation}  and   $t_{\xi}\leq t_{\eta}$.

Let $B=\mathcal{O}_{X,\xi}$, and let $B\longrightarrow B'$ an \'etale extension, and $S\longrightarrow B'$ a finite morphism as in \S\ref{setting_prueba}. Denote by and $\mathfrak{p}_{\eta'}\subset B'$ the prime dominating $\mathfrak{p}_\eta B$, $\xi'$ the closed point dominating $\xi$, and $\p_{\alpha(\eta')}$ the prime $\p_{\eta'}\cap S$. By \cite[Corollary 3.2]{COA}, $\p_{\alpha(\eta')}$ determines a regular prime.  Under these assumptions, using \cite[Lemma 3.6]{COA}, we can assume that $B'=S[\theta_1,\ldots,\theta_e]$ with $\theta_i\in \p_{\eta'}$. Note that $\nub_{\p_{\eta'}}(\theta_i)\geq 1$ and $\nub_{{\xi'}}(\theta_i)\geq 1$ (see \S \ref{p_presentations_suitable} (C)). 

     Since $\ker(\lambda_{\xi})\otimes_{k(\xi)}k(\xi')=\ker(\lambda_{\xi'})$, by (\ref{generan_kernel}) and Remark \ref{suma_directa},
     \begin{equation}
\label{generadores_mxl}
{\mathfrak m}_{\xi'}={\mathfrak m}_{\alpha(\xi')}+\langle \gamma_1,\ldots, \gamma_{t_{\eta}}\rangle.
     \end{equation}
     Thus,  by Lemma \ref{lucky_lemma},
\begin{equation}
\label{generadores_primo}
{\mathfrak p}_{\eta'}={\mathfrak p}_{\alpha(\eta')}+\langle \gamma_1,\ldots, \gamma_{t_{\eta}}\rangle.
\end{equation}
By Remark \ref{remark_lema_lema},   
maybe after translating  the $\theta_i$, we can assume  that $B'=S[\theta_1',\ldots, \theta_e']$  and that 
\begin{equation}\label{mayor_1}
\min\{\nub_{{\mathfrak p}_{\eta'}}(\theta'_i): i=1,\ldots, e\}\geq\min \{\nub_{{\mathfrak p}_{\eta'}}(\gamma_i): i=1,\ldots, s\}>1.
\end{equation}
Now, using   (\ref{slope_p_no_cerrado}) and the definition of $\Hord_{X'}(\eta')$, the proof of the proposition follows using a similar argument as the one we used for closed points   (see \S \ref{setting_prueba} (C)), thus $1<\Hord_{X'}(\eta')=\Hord_{X}(\eta)$.
\end{proof}

The following example illustrates that, for a given $d$-dimensional variety $X$,  there may be  non-closed points $\eta\in X$ with $\SSl({\mathcal O}_{X,\eta})=1$ but $\ord^{(d)}(\eta)>1$. Thus the last part of the first statement
of Theorem \ref{maintheorem} might not hold for non-closed points.
  
\begin{example}
 	Let  $p\in {\mathbb Z}_{>0}$ be a  prime and let $X$ be the  hypersurface in $V^{(3)}:=\Spec({\mathbb F}_p[x,y_1,y_2])$   defined by 
 	$f=x^p-y_1^p y_2$. Then ${\mathfrak p}=\langle x,y_1\rangle$ determines a non-closed point of maximum multiplicity $p$  which is not in the extremal case. The Rees algebra 
 	\begin{equation}
 		\label{p_pres_ejemplo}
 		\G^{(3)}=\Diff \left({\mathbb F}_p[x,y_1,y_2][(x^p-y_1^p y_2)W^p  ]\right)={\mathbb F}_p[x,y_1,y_2][y_1^pW^{p-1},   (x^p-y_1^p y_2)W^p].
 	\end{equation} 
 	represents the stratum of $p$-fold points of $X$. Let $\xi$ be the closed point corresponding to ${\mathfrak m}=\langle x,y_1,y_2\rangle$. Then the natural inclusion ${\mathbb F}_p[y_1,y_2]\subset {\mathbb F}_p[x,y_1,y_2]$ is $\G^{(3)}$-admissible at $\xi$ and provides a presentation of $B={\mathbb F}_p[x,y_1,y_2]/\langle f\rangle $ as in \S\ref{setting_prueba}. The Rees algebra 
 	$$\G^{(2)}={\mathbb F}_p[y_1,y_2][y_1^pW^{p-1}],$$
 	is an elimination algebra for $\G^{(3)}$. Notice that (\ref{p_pres_ejemplo}) is a $p$-presentation of $\G^{(3)}$ which is already in normal form at $\eta$, and that 
 	$$\frac{\nub_{\eta}(y_1^p y_2)}{p}=\frac{p}{p}=1.$$
 	On the other hand, setting ${\mathfrak q}:={\mathfrak p}\cap {\mathbb F}_p[y_1,y_2]$, we have that 
$\ord^{(2)}_X(\eta)=\ord_{\mathfrak q}(\G^{(2)})=\frac{p}{p-1}$.
Thus, $\Hord_{X}^{(2)}(\eta)=1< \ord^{(2)}_X(\eta)$, even though $\eta$ is not in the extremal case.
\end{example}
\bigskip

\noindent
\textbf{Funding.}
The authors were partially supported by PGC2018-095392-B-I00.
The second author  was partially  supported from the Spanish Ministry of Economy and Competitiveness, through the ``Severo Ochoa'' Program for Centres of Excellence in R\&D (SEV-2015-0554).
\medskip

\noindent
\textbf{Competing interest.}
The authors have no competing interests to declare that are relevant to the content of this article.

\bibliographystyle{plain}

\end{document}